\title{The accessibility problem for geometric rough differential equations}
\date{}
\author{Youness Boutaib}
\address{RWTH Aachen University\\
Chair for Mathematics of Information Processing\\ 
Pontdriesch 10\\
52062 Aachen, Germany}
\email{boutaib@mathc.rwth-aachen.de}
\newtheorem{theo}{Theorem}[section]
\newtheorem{lemma}[theo]{Lemma}
\newtheorem*{example}{Example}
\newtheorem{Cor}[theo]{Corollary}
\newtheorem{Def}[theo]{Definition}
\newtheorem{Prop}[theo]{Proposition}
\newtheorem{Rem}[theo]{Remark}
\newtheorem*{Notation}{Notation}
\keywords{Rough paths, rough analysis on manifolds, accessibility, orbits.}
\subjclass{Primary 60L20, 58J65; Secondary 58A30, 53Cxx}
\begin{document}
\maketitle

	\begin{abstract} We show how to use geometric arguments to prove that the terminal solution to a rough differential equation driven by a geometric rough path can be obtained by driving the same equation by a piecewise linear path. For this purpose, we combine some results of the seminal work of Sussmann on orbits of vector fields \cite{Sussmann} with the rough calculus on manifolds developed by Cass, Litterer and Lyons in \cite{CLL2}.
	\end{abstract}

	\section{Introduction}\label{sec:Intro}
	\subsection{Formulation of the problem}\label{subsec:IntroProblem}
	The theory of rough paths introduced by T. Lyons \cite{Lyons,CLL} (then developed in further directions by M. Gubinelli \cite{Gubinelli,Gubinelli2}) teaches us that the effects of paths on differential systems can not always be explained by the collection of values taken by the path through time. The understanding of these effects requires, in general, a convention on what is meant by the sequence of the iterated integrals of the driving path. More specifically, giving a meaning to the differential equation 
	\begin{equation}\label{eq:shortRDE}
	\mathrm{d}y_t=f(y_t)\mathrm{d}x_t
	\end{equation}
	(with some initial condition) requires not only the knowledge of the path $(x_t)_{0 \leq t \leq T}$ but also of its lift into a signature
	\begin{equation}\label{eq:shortSig}
	S(x)_{0,t}=\left(
	1,
	\int_{0\leq u_1\leq t} \mathrm{d}x_{u_1},
	\int_{0\leq u_1\leq u_2 \leq t} \mathrm{d}x_{u_1}\otimes \mathrm{d}x_{u_2},
	\int_{0\leq u_1\leq u_2 \leq u_3 \leq t} \mathrm{d}x_{u_1}\otimes \mathrm{d}x_{u_2}\otimes \mathrm{d}x_{u_3},
	\ldots
	\right).
	\end{equation}
	While the signature (\ref{eq:shortSig}) can be given a canonical meaning for any smooth path $x$ (or more precisely, for any path with finite $p$-variation for $1\leq p <2$), this is no longer the case for rougher paths like the ones frequently encountered in stochastic analysis. For example, one may interpret (\ref{eq:shortSig}) for the Brownian motion using either It\^o's or Stratonivich's integration theory and the solution to the differential equation (\ref{eq:shortRDE}) will also depend in general on this choice. What could be even less intuitive is that the lift (\ref{eq:shortSig}) for smooth paths and the solution to (\ref{eq:shortRDE}) can also be interpreted in non-canonical ways, thus describing unexpected effects. For example, taking $x$ to be the null path in $\mathbb{R}^2$, the classical theory of ordinary differential equations (ODEs) tells us that the solution $y$ to (\ref{eq:shortRDE}) is constant (equal to its initial value $y_0$). However, in the theory of rough paths, one may interpret $x$ as a rough path and choose the polynomial tensor path
	\[t\longmapsto 
	\left( 1, 
	(0,0),
	a t
	\right),
	\]
	where $a \in \mathbb{R}^2\otimes \mathbb{R}^2$ is any (antisymmetric) second order tensor in $\mathbb{R}^2$, as a definition for the second order signature of $x$. In general, the solution to (\ref{eq:shortRDE}) driven by this rough path is not going to be constant. Moreover, the solution to (\ref{eq:shortRDE}) driven by this rough lift of the null path $x$ can be given a physical meaning, for instance as the limiting behaviour of systems driven by the increasingly oscillating paths
	\[x(n):t\longmapsto \left( \frac{\cos(2 \pi n^2 t)}{n} , \frac{\sin(2 \pi n^2 t)}{n}\right) \in \mathbb{R}^2,\]
	(this now popular example can be found both in \cite{Lyons} and \cite{CLL}.) More explicitly, the sequence $(y(n))_n$ of solutions to the differential equation (\ref{eq:shortRDE}) driven by the paths $(x(n))_n$ converges (in some rough path topology) to the solution to the same equation interpreted in the rough path sense to be driven by the rough path
	\[t\longmapsto 
	\left( 1, 
	(0,0),
	\left(
	\begin{array}{cc}
	0&\pi t \\
	-\pi t &0\\
	\end{array}
	\right),
	\ldots
	\right).
	\]
	 Other examples of equations driven by the Brownian motion whose limiting behaviour is to be treated by the theory of rough paths and which can be given a physical meaning can be found in \cite{FGL}.\\
	
	In view of the above, it is legitimate to ask the following question: can rough signals through their non-canonical lifts exhibit terminal output states that cannot be observed in systems that are driven by (piecewise) smooth paths (and their canonical lifts)? More formally, for $\gamma>p\geq 1$, $\xi \in \mathbb{R}^d$, a Lipschitz-$\gamma$ map $f:\mathbb{R}^d\to \mathcal{L}(\mathbb{R}^n, \mathbb{R}^d)$ and a geometric $p$-rough path $\mathbf{X}$ in  $\mathbb{R}^n$ defined over a compact interval $[0,T]$, let $\mathbf{Y}$ be the solution to the following rough differential equation (RDE)
	\begin{equation}\label{eq:RDEIntro}
	\left\{\begin{array}{l}
	\mathrm{d}\mathbf{Y}_t=f(\mathbf{Y}_t)\mathrm{d}\mathbf{X}_t, \quad \forall\;\; 0\leq t \leq T\\
	\mathbf{Y}_0=\xi\\
	\end{array}
	\right.
	\end{equation}
	Does there exist an $\mathbb{R}^n$-valued path $x$ with bounded variation such that the solution to the classical ODE
	\begin{equation}\label{eq:ODEClassicalIntro}
	\left\{\begin{array}{l}
	\mathrm{d}y_t=f(y_t)\mathrm{d}x_t, \quad \forall\;\; 0\leq t \leq T\\
	y_0=\xi\\
	\end{array}
	\right.
	\end{equation}
	satisfies $y_T=\xi+\mathbf{Y}_{0,T}^1$? We will call this the accessibility problem.
	\subsection{Review of the literature and strategy}\label{subsec:literature}
	By the continuity of the It\^o solution map associated to (\ref{eq:RDEIntro}) and the very definition of a geometric rough path, given a geometric $p$-rough path $\mathbf{X}$ in $\mathbb{R}^n$, one can approximate, in the $p$-rough path topology and up to an arbitrarily small error, its solution $\mathbf{Y}$ by the solution $y$ (identified as its canonical signature $S(y)$) to the RDE when driven by a path of bounded variation. Consequently, one can approximate the path $(\xi+\mathbf{Y}_{0,t}^1)_{0\leq t\leq T}$ in the $p$-variation and the uniform convergence topologies by the path $(y_t)_{0\leq t\leq T}$. The question we ask here is slightly different: we are only interested in the terminal value $\xi+\mathbf{Y}_{0,T}^1$ of the output of the system and, as a price for this weaker condition, we want to reproduce this same terminal state by driving the RDE (\ref{eq:RDEIntro}) by a path of bounded variation. Indeed, in general, the path $(\xi+\mathbf{Y}_{0,t}^1)_{0\leq t\leq T}$ has only finite $p$-variation while solutions to the ODE (\ref{eq:ODEClassicalIntro}) have finite $1$-variations. Therefore, it is not always possible to replace $\mathbf{X}$ by a path of finite length and replicate the solution $(\xi+\mathbf{Y}_{0,t}^1)_{0\leq t\leq T}$ at all times.\\
	
	Let us now consider a simple case. Let $N\in\mathbb{N}^*$ and denote by $T^{(N)}(\mathbb{R}^n)$ the truncated tensor algebra of $\mathbb{R}^n$ of order $N$ defined by
	\[ T^{(N)}(\mathbb{R}^n)
	= \mathbb{R} \oplus \mathbb{R}^n 
	\oplus \left(\mathbb{R}^n\right)^{\otimes 2} 
	\oplus \cdots 
	\oplus \left(\mathbb{R}^n\right)^{\otimes N} 
	.\]
	Define the (linear) map $f: T^{(N)}(\mathbb{R}^n) \to \mathcal{L} (\mathbb{R}^n,T^{(N)}(\mathbb{R}^n))$ by
    \[\forall x \in \mathbb{R}^n ,\, 
    \forall (a_0,\ldots,a_N)\in T^{(N)}(\mathbb{R}^n):\quad 
    f(a_0,\ldots,a_N)(x)=(0,a_0\otimes x,\ldots,a_{N-1}\otimes x).\]
    Then, given a path $x:[0,T]\to \mathbb{R}^n$ of bounded variation, the ODE
    \begin{equation}\label{eq:ODESigIntro}
	\left\{\begin{array}{l}
	\mathrm{d}y_t=f(y_t)\mathrm{d}x_t, \quad \forall\;\; 0\leq t \leq T\\
	y_0=(1,0,\cdots,0) \in T^{(N)}(\mathbb{R}^n) \\
	\end{array}
	\right.
	\end{equation}
	admits as a unique solution the order-$N$ truncated signature path $\left(S_{N}(x)_{(0,t)}\right)_{0\leq t\leq T}$ of $x$ \cite{CLL, Chen}. It is also known (by using for example the shuffle product property \cite{CLL, Chen}) that the order-$N$ truncated signatures live in the free nilpotent group $G^{(N)}(\mathbb{R}^n)$ of order $N$ \cite{Reutenauer}. As $f$ is smooth, one may also consider the equation (\ref{eq:ODESigIntro}) in the rough sense. The traces of solutions to  (\ref{eq:ODESigIntro}) when driven by (geometric) rough paths also take their values in $G^{(N)}(\mathbb{R}^n)$ (for example by the continuity of the shuffle product property \cite{CLL}). The answer to the accessibility problem in the case of (\ref{eq:ODESigIntro}) is affirmative and the argument relies on one final ingredient; that the order-$N$ truncated signatures of paths of bounded variation fill the free nilpotent group $G^{(N)}(\mathbb{R}^n)$. The most known proof of this fact is a geometric one based on a theorem by Chow.
	\begin{theo}[Chow-Rashevskii]{\cite{Chow, Gromov, Rashevskii}} Let $M$ be a connected smooth manifold and let $\mathcal{D}$ be a finite family of smooth vector fields in $M$ such that these vector fields and their successive (higher order) Lie brackets span every tangent space of $M$. Then every two points in $M$ can be joined by a concatenation of integral curves of the vector fields in $\mathcal{D}$.
	\end{theo}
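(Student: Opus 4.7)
The plan is to fix $p\in M$ and study the \emph{orbit} $O(p)\subseteq M$, defined as the set of points reachable from $p$ by a concatenation of integral curves of vector fields in $\mathcal{D}$, with flow times allowed to be of either sign. Since concatenation then gives an equivalence relation on $M$, the orbits partition $M$; so if every orbit is open, the connectedness of $M$ forces a single orbit and the theorem follows. The problem therefore reduces to showing that $O(p)$ contains an open neighbourhood of $p$, from which openness at any other point of $O(p)$ follows by composition with the connecting concatenation.

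To exhibit such a neighbourhood, I would use the spanning hypothesis to pick iterated Lie brackets $Z_1,\ldots,Z_n$ of elements of $\mathcal{D}$ (with $n=\dim M$) so that $(Z_1(p),\ldots,Z_n(p))$ is a basis of $T_pM$. For each $Z_i$, I need to construct a one-parameter family of diffeomorphisms $\psi_i^t$, given explicitly as a composition of flows of vector fields in $\mathcal{D}$ with time parameters depending on $t$, such that $\psi_i^0 = \mathrm{id}$ and $\frac{d}{dt}\psi_i^t(p)\big|_{t=0} = Z_i(p)$. The map
\[\Phi(t_1,\ldots,t_n) \;=\; \psi_1^{t_1}\circ\psi_2^{t_2}\circ\cdots\circ\psi_n^{t_n}(p)\]
then takes values in $O(p)$ and has Jacobian $(Z_1(p),\ldots,Z_n(p))$ at the origin, which is invertible. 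An inverse function theorem type argument then yields an open neighbourhood of $p$ inside the image of $\Phi$, hence inside $O(p)$.

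The main obstacle is the construction of the $\psi_i^t$, i.e.\ the recovery of Lie bracket directions from compositions of flows. For a depth-one bracket $Z=[X,Y]$ the classical trick applies: the commutator curve
\[\gamma(s) \;=\; \Phi^Y_{-s}\circ \Phi^X_{-s}\circ \Phi^Y_{s}\circ \Phi^X_{s}(p)\]
satisfies $\gamma(0)=p$, $\gamma'(0)=0$ and $\gamma''(0)=2[X,Y](p)$, so the reparametrisation $t\mapsto \gamma(\sqrt{t})$ has right derivative $[X,Y](p)$ at $t=0$. For iterated brackets of higher depth, analogous but longer compositions with time parameters scaled by fractional powers of $t$ realise the desired infinitesimal direction; organising this by induction on bracket depth, and ensuring enough regularity of the resulting curves for a rigorous application of the inverse function theorem (or, alternatively, replacing the latter by a topological open-mapping argument, since the $\psi_i^t$ are in general only differentiable at $t=0$), is the real technical content of the proof. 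A conceptually cleaner route, which moreover ties in with the tools used elsewhere in the paper, is to invoke Sussmann's orbit theorem directly: it asserts that $O(p)$ is an immersed submanifold of $M$ whose tangent space at $p$ contains every evaluation at $p$ of an iterated Lie bracket of elements of $\mathcal{D}$, and under the spanning hypothesis this forces the tangent space to equal $T_pM$, so that $O(p)$ is open and we conclude.
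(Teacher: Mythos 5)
The paper does not prove this theorem: it is quoted as a classical result with citations to Chow, Rashevskii and Gromov, and the machinery the paper actually develops (Sussmann's orbit theorem, Theorem \ref{theo:GeneratePD} and its consequences) is presented precisely as the more flexible generalisation that replaces it. So there is no internal proof to compare against, and the relevant question is whether your sketch is sound. Your second route is: invoking Sussmann's orbit theorem, the orbit $O(p)$ is an immersed submanifold whose tangent space at $p$ is $P_{\mathcal{D}}(p)$, which for smooth vector fields contains every iterated Lie bracket of elements of $\mathcal{D}$ evaluated at $p$ (differentiate $t\mapsto (\tilde{f}_{-t})_*\,h(\tilde{f}_t(p))$ inside the fixed finite-dimensional subspace $P_{\mathcal{D}}(p)$), hence equals $T_pM$ under the spanning hypothesis; orbits are therefore open, they partition $M$, and connectedness forces a single orbit. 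This is correct and is exactly the way Chow--Rashevskii sits inside the framework the paper uses. Your first route, the commutator-flow construction, is the classical heuristic but is genuinely incomplete as written, and you say so yourself: the reparametrised curves $t\mapsto\gamma(\sqrt{t})$ are only one-sidedly differentiable at $0$, the map $\Phi$ is then defined only on an orthant and is not $C^1$ near the origin, so the inverse function theorem does not apply, and ``the image of a map differentiable only at a corner point contains a neighbourhood'' is false in general without extra structure; making this rigorous requires either quantitative remainder estimates plus a degree or open-mapping argument, or the standard alternative of inducting on the dimension of a maximal-rank composition-of-flows map and showing that a lower-dimensional image would be invariant under all flows of $\mathcal{D}$, hence tangent to all iterated brackets, contradicting the spanning hypothesis. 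Since you flag the gap and supply the orbit-theorem argument as a complete substitute, the proposal stands, but the commutator route should either be carried out in detail or dropped in favour of leading with Sussmann's theorem.
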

	First note that $G^{(N)}(\mathbb{R}^n)$ is a Lie group whose tangent space is given (through a globally defined logarithm map) by ${\mathcal{L}}^{(N)}(\mathbb{R}^n)$, the space of Lie polynomials of degree $N$ \cite{CLL, Reutenauer}. Denote by $(e_i)_{1\leq i\leq n}$ the canonical basis of $\mathbb{R}^n$ and, for every $i\in [\![1,n]\!]$, by $f^i$ the linear homomorphism defined on $T^{(N)}(\mathbb{R}^n)$ by 
	\[f^i(y)=f(y)(e_i), \quad \textrm{for all } y\in T^{(N)}(\mathbb{R}^n). \] 
	It is clear then, by the very definition of ${\mathcal{L}}^{(N)}(\mathbb{R}^n)$, that the left-invariant vector fields $f^1,\ldots, f^n$ and their successive Lie brackets span the tangent spaces of $G^{(N)}(\mathbb{R}^n)$. Hence, by Chow's theorem, any point in $G^{(N)}(\mathbb{R}^n)$ can be connected to the unit element $(1,0,\cdots,0)$ by a concatenation of integral curves of the vector fields $f^1,\ldots, f^n$. For $i\in [\![1,n]\!]$, an integral curve $\gamma$ of $f^i$ is also a solution to the ODE  
	\[\mathrm{d}\gamma_t=f(\gamma_t)\mathrm{d}x^{(i)}_t,\] 
	where $x^{(i)}: t\mapsto t e_i$ is an axis path. Therefore, a concatenation of integral curves of the vector fields $f^1,\ldots, f^n$ starting at $(1,0,\cdots,0)$ corresponds to a solution to the ODE (\ref{eq:ODESigIntro}) driven by a concatenation of axis paths, which itself is a path of bounded variation (we will revisit this argument more in detail in the proof of the main theorem of this article).  Therefore $G^{(N)}(\mathbb{R}^n)$ is indeed exactly the space of all signatures of paths of bounded variation and the accessibility problem is solved in this case.\\
	
	In the current paper, we will generalise the geometric argument above to the more generic RDE (\ref{eq:RDEIntro}) where the terminal solutions to the corresponding ODE do not define a smooth manifold with a structure as simple and as well understood as the Lie group $G^{(N)}(\mathbb{R}^n)$. There will be three main ingredients to these generalisations:
	\begin{itemize}
	\item A thorough understanding of Lipschitz vector fields and their flows (Section \ref{sec:LipFlows}). This section recalls several of the results in \cite{Boutaib} and provides new ones that will be the base of all the proofs of Lipschitz regularity in this paper.
	\item A geometric structure on which one can define rough paths and RDEs (Sections \ref{sec:LipGeo} and \ref{sec:RoughOnManifolds}). In Section \ref{sec:LipGeo}, we recall the definition of Lipschitz manifolds in \cite{CLL2}, show how to endow the unit ball (and sets that are diffeomorphic to it) with such a structure and explain how the Lipschitz regularity of vector fields translate into the Lipschitz regularity of the transport map they induce between Lipschitz manifolds. Section \ref{sec:RoughOnManifolds} recalls the main elements of the theory of rough paths on manifolds as laid out in \cite{CLL2}.
	\item Sussmann's seminal work on orbits of vector fields \cite{Sussmann}, which is a more flexible and more general version of Chow's theorem (Section \ref{sec:LipOrbits}). We will adapt these results to our needs and build local Lipschitz structures on neighborhoods contained in the same orbit.
	\end{itemize}
	In Section \ref{sec:MainThm}, we will combine all our constructions in the preceding sections to prove the general accessibility property for RDEs driven by geometric rough paths. We will conclude the paper with some final remarks in Section \ref{sec:Conclusion}.\\
	
	We end this introduction with a few words on some prerequisites, conventions and notations. First, and due to it becoming a far more popular and understood topic nowadays than a few years ago, some knowledge of the classical theory of (geometric) rough paths is assumed. Classical rough paths in Banach spaces are denoted in bold letters, with components denoted as follows
	\[
	\mathbf{X}_{s,t}=\left(
	1, \mathbf{X}^1_{s,t}, \mathbf{X}^2_{s,t}, \ldots
	\right).
	\]
	We simply denote the null rough path by $0$
	\[
	0_{s,t}=\left(
	1, 0, 0, \ldots, 0,\ldots
	\right).
	\]
	Second, most of the maps that will be considered in this paper are defined between vector spaces. Consequently, most of the statements below can be expressed in terms of directional or Fr\'echet derivatives. However, as we aim to build a non-flat geometric structure, the results and the intuition are more straightforward when sometimes expressed in a geometric language (e.g. pushforwards). The standard characterisations of the tangent spaces to finite dimensio\-nal vector spaces will be heavily used. For example, for a smooth map $f:\mathbb{R}^n\to \mathbb{R}^p$ and $x,v \in \mathbb{R}^n$, $f_*(x)(v)$ is sometimes used instead of writing $f_*(x)(\mathrm{D}_{x,v})$ ($\mathrm{D}_{x,v}$ being the derivation (tangent vector) at $x$ in the direction $v$), is identified with $\mathrm{d}f(x)(v)$ and can then be understood either as a vector in $\mathbb{R}^p$ or a tangent vector to $\mathbb{R}^p$ at $f(x)$. Finally, we will heavily rely on the following notations throughout this article (some of which have already been used in this introduction). \\ 
	
	\begin{longtable}{llp{0.81\linewidth}}
	$\mathcal{L}(E,F)$&:& The space of (continuous) linear mappings from a normed vector space $E$ to a normed vector space $F$.\\
	$E^{\otimes n}$&:& The space of homogeneous tensors of the vector space $E$ of order $n$ for $n\in\mathbb{N}^*$.\\	
	$B_n(x,\alpha)$&:& The open ball centred at $x\in\mathbb{R}^n$ of radius $\alpha$, $\mathbb{R}^n$ being endowed with the Euclidean norm.\\ 
	$\pi_{n,p,d}$&:& The orthogonal projection defined from $\mathbb{R}^{n}\simeq \mathbb{R}^{p}\oplus \mathbb{R}^{n-p}$ into $\mathbb{R}^{d}\simeq\mathbb{R}^{p}\oplus \mathbb{R}^{d-p}$ whose kernel is $\{0_{\mathbb{R}^{p}}\}\oplus \mathbb{R}^{n-p}$ and image is $ \mathbb{R}^{p} \oplus \{0_{\mathbb{R}^{d-p}}\}$.\\
	$\mathrm{Id}_U$&:& The identity map on the set $U$.\\
	$T_xM$&:& The tangent space to a manifold $M$ at $x\in M$.\\
	$\mathcal{\tau}(M)$&:& The set of vector fields on a manifold $M$.\\
	$[p]$&:& The integer part of a real number $p$, i.e. the only integer such that $0\leq p-[p]<1$.\\
	$\overline{A}$&:& The closure of a subset $A$ of a topological space.\\	
\end{longtable}	
	\section{A review of results on Lipschitz maps and their flows}\label{sec:LipFlows}
	\subsection{Lipschitz maps}
	We first recall the classical notion of Lipschitz regularity (as presented for example in \cite{CLL}). Given two Banach spaces $E$ and $F$ and an integer $k\geq 1$, we denote by $\mathcal{L}_s(E^{\otimes k},F)$ the space of symmetric $k$-linear mappings from $E$ to $F$. We will use, without ambiguity, the same notation $\|.\|$ to designate a norm on $E^{\otimes k}$ and the norm on $F$. We will also avoid the discussion on the required properties from the norms on tensor products of spaces and instead refer the interested reader either to the literature on rough paths (e.g. \cite{CLL}) or the manuscript \cite{Boutaib}.
	\begin{Def}\label{Def:LipMap} Let $n\in \mathbb{N}$ and $0< \varepsilon\leq 1$. Let $E$ and $F$ be two normed vector spaces and $U$ be a subset of $E$. Consider the maps $f^0:U\to E$, and, for every $k \in [\![1,n]\!]$, $f^k:U\to \mathcal{L}_s(E^{\otimes k},F)$.	For $k \in [\![0,n]\!]$, the map $R_k:E\times E\to \mathcal{L}(E^{\otimes k},F)$ defined by
	\[\forall x,y \in U, \forall v \in E^{\otimes k}: f^k(x)(v)=\sum\limits_{j=k}^{n} f^j(y)\left(\frac{v\otimes (x-y)^{\otimes (j-k)}}{(j-k)!}\right)+R_k(x,y)(v)\]
	is called the remainder of order $k$ associated to $f=(f^0,f^1,\ldots,f^n)$.\\	
	The collection $f=(f^0,f^1,\ldots,f^n)$ is said to be Lipschitz of degree $n+\varepsilon$ on $U$ (or in short a $\textrm{Lip}-(n+\varepsilon)$ map) if there exists a constant $M$ such that for all $k \in [\![0,n]\!]$, $x,y \in U$ and $v_1,\ldots,v_k \in E$:
	\begin{enumerate}
	\item $\|f^k(x)(v_1\otimes\cdots\otimes v_k)\| \leq M \|v_1\otimes\cdots\otimes v_k\| $;
	\item $\|R_k(x,y)(v_1\otimes\cdots\otimes v_k)\|\leq M \|x-y\|^{n+\varepsilon-k}\|v_1\otimes\cdots\otimes v_k\|$.
	\end{enumerate}
     The smallest constant $M$ for which the properties above hold is called the $\textrm{Lip}-(n+\varepsilon)$-norm of $f$ and is denoted by $\|f\|_{\textrm{Lip}-(n+\varepsilon)}$.
     \end{Def}
    Note that Definition \ref{Def:LipMap} has the advantage of being purely quantitative: no topological assumptions are made on the domain $U$. 
    \begin{Notation}
    For $\gamma>0$, we will denote by $\lceil\gamma\rceil$ the unique integer such that $0<\gamma- \lceil\gamma\rceil\leq 1$.
    \end{Notation}
    The following proposition shows that Lipschitz regularity is a uniform local property.
	\begin{Prop}\label{Prop:LocalLipGen}\cite{Boutaib} Let $\gamma>0$. Let $E$ and $F$ be two normed vector spaces and $U$ be a subset of $E$. We assume that there exists $\delta>0$ and $M\geq0$ such that, for every $x\in U$, $f_{|B(x,\delta)\cap U}$ is $\textrm{Lip}-\gamma$ with a norm less than or equal to $M$ (where $f=(f^0,\ldots,f^{\lceil\gamma\rceil}))$. Then $f$ is $\textrm{Lip}-\gamma$ and there exists a constant $C_{\delta,\gamma}$ (depending only on $\delta$ and $\gamma$) such that
	\[\|f\|_{\textrm{Lip}-\gamma}\leq C_{\delta,\gamma} M.\] 
	\end{Prop}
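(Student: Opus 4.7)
\medskip

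\noindent\textbf{Proof proposal.} The bound on each $f^k$ (property (1) of the $\textrm{Lip}$--$\gamma$ definition) is immediate from the hypothesis: for any $x\in U$, one has $x\in B(x,\delta)\cap U$, so $\|f^k(x)(v)\|\leq M\|v\|$ for all admissible $k$ and $v$. The substance of the proposition therefore lies entirely in the remainder estimate (property (2)), and my plan is to prove it by a dichotomy on the distance $\|x-y\|$ between the two base points.

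In the \emph{near} regime $\|x-y\|<\delta$, the point $y$ lies in $B(x,\delta)\cap U$, so $f^{|_{B(x,\delta)\cap U}}$ is $\textrm{Lip}$--$\gamma$ with norm at most $M$, and since the remainder $R_k(x,y)$ depends only on the values $f^k(x)$, $f^{k+1}(y),\ldots,f^{\lceil\gamma\rceil}(y)$, the global remainder coincides with the local one on this pair; the desired inequality $\|R_k(x,y)(v)\|\leq M\|x-y\|^{\gamma-k}\|v\|$ is thus inherited directly.

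In the \emph{far} regime $\|x-y\|\geq \delta$, I would not try to use any Taylor-like cancellation but simply apply the triangle inequality to the defining identity
\[
R_k(x,y)(v)=f^k(x)(v)-f^k(y)(v)-\sum_{j=k+1}^{\lceil\gamma\rceil}f^j(y)\!\left(\frac{v\otimes (x-y)^{\otimes(j-k)}}{(j-k)!}\right),
\]
then bound every $f^j$ term by $M$ using property (1) applied locally at its argument. This yields
\[
\|R_k(x,y)(v)\|\leq 2M\|v\|+M\|v\|\sum_{j=k+1}^{\lceil\gamma\rceil}\frac{\|x-y\|^{j-k}}{(j-k)!}.
\]
To convert this into the required form, I would use the convention $0<\gamma-\lceil\gamma\rceil\leq 1$ fixed in the paper, which gives $\gamma-j>0$ for every $j\leq \lceil\gamma\rceil$. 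Writing $\|x-y\|^{j-k}=\|x-y\|^{\gamma-k}\cdot \|x-y\|^{j-\gamma}$ and using $\|x-y\|^{j-\gamma}\leq\delta^{j-\gamma}$ (valid since the exponent is negative and $\|x-y\|\geq\delta$), together with $1\leq (\|x-y\|/\delta)^{\gamma-k}$ for the first two terms, collects everything into $M\|v\|\|x-y\|^{\gamma-k}$ times a constant depending only on $\delta$ and $\gamma$.

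The only real bookkeeping obstacle is keeping the exponent conventions straight --- in particular checking that $\gamma-k>0$ for all $0\leq k\leq \lceil\gamma\rceil$, so that $\|x-y\|^{\gamma-k}$ is genuinely controllable from below by $\delta^{\gamma-k}$ in the far regime --- and verifying that the local remainders at different base points are compatible with the global remainder, which is transparent once one recognises that $R_k(x,y)$ is defined pointwise in $(x,y)$. Setting $C_{\delta,\gamma}:=\max\!\left(1,\,2\delta^{k-\gamma}+\sum_{j=k+1}^{\lceil\gamma\rceil}\delta^{j-\gamma}/(j-k)!\right)$ (maximised over $k\in[\![0,\lceil\gamma\rceil]\!]$) then absorbs both regimes and produces the claimed bound $\|f\|_{\textrm{Lip}-\gamma}\leq C_{\delta,\gamma}M$.
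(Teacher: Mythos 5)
Your proof is correct: the boundedness axiom is immediate, the near regime $\|x-y\|<\delta$ inherits the remainder bound because both points lie in $B(x,\delta)\cap U$ and the remainder is determined pointwise by the values of $(f^0,\dots,f^{\lceil\gamma\rceil})$ at $x$ and $y$, and the far regime $\|x-y\|\geq\delta$ is handled by the crude triangle-inequality bound, where $\gamma-j>0$ for all $j\leq\lceil\gamma\rceil$ (by the paper's convention $0<\gamma-\lceil\gamma\rceil\leq 1$) lets you absorb everything into $C_{\delta,\gamma}M\|x-y\|^{\gamma-k}\|v\|$. The paper itself gives no proof, deferring to \cite{Boutaib}, and your dichotomy argument is exactly the standard one used there, so nothing further is needed.
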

	A very useful characterisation of Lipschitz maps can be obtained in the case when the domain is assumed to be open. The proof of the following basic result can be found for example in \cite{CLL2}.
	\begin{theo}\label{thm:LipOpenCvx} Let $n\in \mathbb{N}$, $0< \varepsilon\leq 1$ and $M\geq 0$. Let $E$ and $F$ be two normed vector spaces and $U$ be a subset of $E$.  Let $f^0:U\to F$ and, for every $k\in [\![1,n]\!]$, $f^k:U\to \mathcal{L}(E^{\otimes k},F)$ be maps. We consider the two following assertions
	\begin{description}
	\item[(A1)] $(f^0,f^1,\ldots,f^n)$ is $\textrm{Lip}-(n+\varepsilon)$ and $\|f^0\|_{\textrm{Lip}-(n+\varepsilon)}\leq M$.
	\item[(A2)] $f^0$ is $n$ times differentiable, with $f^1,\ldots,f^n$ being its successive derivatives. Moreover, $\|f^0\|_{\infty}$, $\ldots$, $\|f^n\|_{\infty}$ are upper-bounded by $M$ and for all $x,y\in U$, one has
	\[ \|f^n(x)-f^n(y)\|\leq M \|x-y\|^{\varepsilon}.\]
	\end{description}
	If $U$ is open then $\mathbf{(A1)}\Rightarrow \mathbf{(A2)}$. If $U$ is open and convex then $\mathbf{(A1)}\Leftrightarrow \mathbf{(A2)}$.
	\end{theo}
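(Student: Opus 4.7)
The plan is to treat the two implications separately.

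For $\mathbf{(A1)} \Rightarrow \mathbf{(A2)}$ with $U$ open, the uniform bounds $\|f^k\|_\infty \leq M$ are immediate from condition~(1) of Definition~\ref{Def:LipMap}. For the H\"older estimate on $f^n$, specialising the defining Taylor-type formula to $k=n$ collapses the sum to the single term $f^n(y)(v)$, so that $R_n(x,y)=f^n(x)-f^n(y)$, and condition~(2) gives precisely the bound $\|f^n(x)-f^n(y)\|\leq M\|x-y\|^{\varepsilon}$. It remains to show that $f^m=(f^{m-1})'$ (as a Fr\'echet derivative) for every $1\leq m\leq n$, which I would carry out by fixing $y\in U$ and using the Taylor formula with $k=m-1$ to write
\[
f^{m-1}(x)(v)-f^{m-1}(y)(v)-f^m(y)(v\otimes(x-y))
=\sum_{j=m+1}^{n} f^j(y)\!\left(\frac{v\otimes(x-y)^{\otimes(j-m+1)}}{(j-m+1)!}\right)+R_{m-1}(x,y)(v).
\]
The bound~(1) makes each summand $O(\|x-y\|^{j-m+1})$ with $j-m+1\geq 2$, while~(2) makes the remainder $O(\|x-y\|^{n+\varepsilon-m+1})$ with $n+\varepsilon-m+1>1$; since $U$ is open, $x$ may approach $y$ freely, so the whole right-hand side is $o(\|x-y\|)$, proving Fr\'echet differentiability of $f^{m-1}$ at $y$ with derivative $f^m(y)$.

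For $\mathbf{(A2)} \Rightarrow \mathbf{(A1)}$ with $U$ open and convex, the boundedness (1) is again immediate, and I would obtain (2) from the integral form of Taylor's theorem. Convexity of $U$ ensures that the segment $t\mapsto y+t(x-y)$ lies in $U$ for all $t\in[0,1]$, so one may apply the standard integral remainder formula to $f^k$ (whose successive derivatives up to order $n-k$ are $f^{k+1},\ldots,f^n$ by~(A2)) to obtain, for $0\leq k<n$,
\[
R_k(x,y)(v)=\frac{1}{(n-k-1)!}\int_0^1(1-t)^{n-k-1}\bigl[f^n(y+t(x-y))-f^n(y)\bigr]\bigl(v\otimes(x-y)^{\otimes(n-k)}\bigr)\,\mathrm{d}t,
\]
the case $k=n$ being the identity $R_n(x,y)=f^n(x)-f^n(y)$ already noted. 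Inserting the H\"older bound $\|f^n(u)-f^n(y)\|\leq M\|u-y\|^{\varepsilon}$ from~(A2) and pulling $\|x-y\|^{n-k}$ out of the integrand yields
\[
\|R_k(x,y)(v)\|\leq \frac{M}{(n-k-1)!}\left(\int_0^1 (1-t)^{n-k-1}t^{\varepsilon}\,\mathrm{d}t\right)\|x-y\|^{n+\varepsilon-k}\|v\|,
\]
and the finite Beta-function integral is absorbed into the constant, possibly at the cost of enlarging $M$ by a universal factor (which one then tracks to keep the statement clean).

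The argument is essentially a careful bookkeeping of Taylor's theorem, so there is no deep obstacle; the main subtleties are, on the forward side, writing the defining formula at level $k=m-1$ in a way that isolates the linear-in-$(x-y)$ term and controls all the remaining contributions as $o(\|x-y\|)$, and on the reverse side, observing that convexity is exactly what is needed to invoke the integral remainder formula on the segment $[y,x]$ (this is the only place where convexity is used, which explains why the reverse implication requires it while the forward implication does not).
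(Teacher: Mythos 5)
Your argument is correct: the paper itself gives no proof of this theorem (it is quoted as a basic result with a reference to \cite{CLL2}), and your two-step Taylor bookkeeping---reading the H\"older bound off $R_n(x,y)=f^n(x)-f^n(y)$, extracting Fr\'echet differentiability of $f^{m-1}$ with derivative $f^m(y)$ from the order-$(m-1)$ expansion (the leftover terms being $O(\|x-y\|^2)$ and $O(\|x-y\|^{n+\varepsilon-m+1})$, hence $o(\|x-y\|)$ uniformly over $\|v\|\leq 1$), and using the integral remainder along the segment $[y,x]$ for the converse---is exactly the standard argument behind it. One small improvement: no enlargement of $M$ is needed in the converse, since $\frac{1}{(n-k-1)!}\int_0^1(1-t)^{n-k-1}t^{\varepsilon}\,\mathrm{d}t\leq\frac{1}{(n-k)!}\leq 1$, so the bound $\|R_k(x,y)(v)\|\leq M\|x-y\|^{n+\varepsilon-k}\|v\|$ holds with the constant $M$ itself, which is what the clean equivalence in the statement requires.
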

	\begin{Rem} Following Theorem \ref{thm:LipOpenCvx}, the maps $f^1,\ldots,f^n$ are uniquely determined by $f^0$ if the domain of definition is open. While we stress that this is not the case in general, we will often use the shorthand statement that ``$f^0$ is Lipschitz'' instead of ``$(f^0,\ldots,f^n)$ is Lipschitz'', assuming that there is no confusion as to the knowledge of $f^1,\ldots,f^n$.
	\end{Rem}
	There exists a natural embedding between spaces of Lipschitz maps.
	\begin{theo}\label{EmbedLip}\cite{Boutaib} Let $\gamma,\gamma'>0$ such that $\gamma'\leq \gamma$. Let $E$ and $F$ be two normed vector spaces and $U$ be a subset of $E$. Let $f:U\to F$ be a $\textrm{Lip}-\gamma$ map. Then $f$ is $\textrm{Lip}-\gamma'$ and there exists a constant $M_{\gamma,\gamma'}$, depending only on $\gamma$ and $\gamma'$, such that $\|f\|_{\textrm{Lip}-\gamma'}\leq M_{\gamma,\gamma'} \|f\|_{\textrm{Lip}-\gamma}$.
	\end{theo}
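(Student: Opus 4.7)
Write $\gamma = n + \varepsilon$ and $\gamma' = n' + \varepsilon'$ with $n = \lceil\gamma\rceil$, $n' = \lceil\gamma'\rceil$, and $\varepsilon, \varepsilon' \in (0,1]$. The assumption $\gamma' \leq \gamma$ leaves two possibilities: $n' = n$ (with $\varepsilon' \leq \varepsilon$) or $n' < n$. In either case, the natural candidate for the $\textrm{Lip}-\gamma'$ structure on $f$ is the sub-collection $(f^0, f^1, \ldots, f^{n'})$, and Condition~(1) of Definition~\ref{Def:LipMap} for this collection is immediately inherited from the $\textrm{Lip}-\gamma$ hypothesis with the constant $M := \|f\|_{\textrm{Lip}-\gamma}$. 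The content of the proof therefore lies entirely in Condition~(2).

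Denote by $R_k$ (respectively $R'_k$) the order-$k$ remainder associated to $(f^0, \ldots, f^n)$ (respectively $(f^0, \ldots, f^{n'})$), for $k \in [\![0, n']\!]$. Subtracting the two defining identities yields
\[
R'_k(x,y)(v) \;=\; R_k(x,y)(v) \;+\; \sum_{j=n'+1}^{n} f^j(y)\!\left(\frac{v \otimes (x-y)^{\otimes (j-k)}}{(j-k)!}\right),
\]
with the convention that the sum is empty when $n'=n$. The goal is to bound $\|R'_k(x,y)(v_1\otimes\cdots\otimes v_k)\|$ by a constant multiple of $M\,\|x-y\|^{n'+\varepsilon'-k}\,\|v_1\otimes\cdots\otimes v_k\|$.

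The plan is to split on whether $\|x-y\|\leq 1$ or $\|x-y\|>1$. In the first regime, the $\textrm{Lip}-\gamma$ estimate gives $\|R_k(x,y)(v)\| \leq M\|x-y\|^{n+\varepsilon-k}\|v\|$, which is dominated by $M\|x-y\|^{n'+\varepsilon'-k}\|v\|$ since the exponent only grew and the base lies in $[0,1]$; each extra summand contributes a factor $\|x-y\|^{j-k}$ with $j \geq n'+1 > n'+\varepsilon'$, which on $[0,1]$ is again dominated by $\|x-y\|^{n'+\varepsilon'-k}$. In the second regime the $\textrm{Lip}-\gamma$ remainder bound is useless (it gives too large a power of $\|x-y\|$), so I would instead expand $R'_k(x,y)(v)= f^k(x)(v) - \sum_{j=k}^{n'} f^j(y)\bigl(\tfrac{v\otimes(x-y)^{\otimes(j-k)}}{(j-k)!}\bigr)$ and use only the sup-norm bounds on the $f^j$'s: each of the finitely many terms is at most a universal multiple of $M\|v\|\|x-y\|^{n'-k}$, and $\|x-y\|^{n'-k} \leq \|x-y\|^{n'+\varepsilon'-k}$ since $\|x-y\|>1$ and $\varepsilon'>0$.

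The two regimes combine to yield Condition~(2) with a constant $M_{\gamma,\gamma'}$ depending only on $n,n',\varepsilon,\varepsilon'$ and on some factorials, hence only on $\gamma$ and $\gamma'$. The only conceptual point is the regime switch: the $\textrm{Lip}-\gamma$ remainder bound must be \emph{abandoned} once $\|x-y\|>1$, in favour of the uniform boundedness of the derivatives. Once that observation is in place, the remaining estimates reduce to elementary comparisons of powers of $\|x-y\|$, which is why I would expect this to be the only mildly subtle part of an otherwise routine verification.
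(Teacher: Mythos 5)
Your proposal is correct: restricting to the truncated collection $(f^0,\ldots,f^{n'})$, subtracting the two Taylor identities to get $R'_k = R_k + \sum_{j=n'+1}^{n} f^j(y)\bigl(\tfrac{\cdot\otimes(x-y)^{\otimes(j-k)}}{(j-k)!}\bigr)$, and splitting into $\|x-y\|\leq 1$ (where the Lip-$\gamma$ remainder bound and the extra Taylor terms are dominated by $\|x-y\|^{n'+\varepsilon'-k}$) versus $\|x-y\|>1$ (where one abandons the remainder bound and uses only the uniform bounds on the $f^j$) gives Condition (2) of Definition \ref{Def:LipMap} with a constant depending only on $\gamma$ and $\gamma'$, while Condition (1) is inherited verbatim. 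The paper itself offers no proof of Theorem \ref{EmbedLip} — it is quoted from the cited reference — but your regime-splitting argument is the standard proof of this embedding, and I see no gap in it.
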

	As an illustration of the above results, we construct a Lipschitz bijection from the Euclidean space to the unit ball, which will be of use to us in the subsequent sections.
	\begin{lemma}\label{lemma:GlobalLipDiff} Let $d\in \mathbb{N}^*$. Let $F:\mathbb{R}^d \to B_d(0,1)$ be the map given by
	\[F:y\longmapsto \frac{y}{\sqrt{1+\|y\|^2}}.\]
	Then $F$ is a homeomorphism. Moreover:
	\begin{itemize}
		\item $F$ is Lipschitz of any degree,
		\item For every $\delta \in (0,1)$, its inverse $F^{-1}_{|B_d(0,\delta)}$ is Lipschitz of any degree.
	\end{itemize}
	\end{lemma}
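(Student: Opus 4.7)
The plan is to exhibit the inverse of $F$ in closed form, verify the homeomorphism by inspection, and then invoke Theorem \ref{thm:LipOpenCvx} on the open convex sets $\mathbb{R}^d$ and $B_d(0,\delta)$ to reduce the two Lipschitz claims to uniform boundedness of all (Fr\'echet) derivatives of smooth formulas.

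First I would compute the inverse explicitly: setting $z=F(y)$ and squaring the identity $z\sqrt{1+\|y\|^2}=y$ gives $\|z\|^2(1+\|y\|^2)=\|y\|^2$, whence $\|y\|^2=\|z\|^2/(1-\|z\|^2)$ and $y=z/\sqrt{1-\|z\|^2}$. Thus $F:\mathbb{R}^d\to B_d(0,1)$ is a bijection with inverse $G:z\mapsto z/\sqrt{1-\|z\|^2}$, and both $F$ and $G$ are continuous as compositions of polynomial maps with the square root defined away from zero; this establishes the homeomorphism claim.

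Next, to establish that $F$ is $\textrm{Lip}-\gamma$ for every $\gamma>0$, I would apply Theorem \ref{thm:LipOpenCvx} with the open convex domain $U=\mathbb{R}^d$. It suffices to show that $F\in C^\infty(\mathbb{R}^d,\mathbb{R}^d)$ and that every derivative $\mathrm{D}^kF$ is uniformly bounded on $\mathbb{R}^d$ (the H\"older estimate on $\mathrm{D}^{\lceil\gamma\rceil}F$ then follows from boundedness of $\mathrm{D}^{\lceil\gamma\rceil+1}F$ via the mean value inequality on the convex domain). An induction on $k$, using Leibniz' rule for the smooth product $F(y)=y\cdot\phi(y)$ with $\phi(y)=(1+\|y\|^2)^{-1/2}$, yields
\[
\mathrm{D}^k\phi(y)=Q_k(y)\,(1+\|y\|^2)^{-(2k+1)/2}
\]
for some polynomial $Q_k$ of degree at most $k$ in the coordinates of $y$; combined with the linearity of $y\mapsto y$, this expresses each $\mathrm{D}^kF(y)$ as a finite sum of terms of the form $P_\alpha(y)(1+\|y\|^2)^{-\alpha/2}$ with $\deg P_\alpha\leq \alpha$, all of which are bounded on $\mathbb{R}^d$.

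The second bullet is obtained by the same strategy applied to $G=F^{-1}$ on the open convex set $B_d(0,\delta)$: since $1-\|z\|^2\geq 1-\delta^2>0$ there, each derivative $\mathrm{D}^kG(z)$ is a finite sum of polynomial functions of $z$ divided by a positive power of $(1-\|z\|^2)^{1/2}\geq\sqrt{1-\delta^2}$, hence uniformly bounded on $B_d(0,\delta)$. A second application of Theorem \ref{thm:LipOpenCvx} then delivers $F^{-1}_{|B_d(0,\delta)}\in\textrm{Lip}-\gamma$ for every $\gamma>0$. The only mildly technical step in the whole argument is the Leibniz bookkeeping that gives uniform boundedness of the $\mathrm{D}^kF$ on the \emph{non-compact} domain $\mathbb{R}^d$; once the shape of these derivatives is recorded, everything reduces to elementary polynomial comparisons.
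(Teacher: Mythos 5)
Your proof is essentially correct and follows the same underlying strategy as the paper: both reduce the claim to Theorem \ref{thm:LipOpenCvx} by noting that $F$ and $F^{-1}$ are smooth with explicitly controllable derivatives. The difference is in how the non-compact domain $\mathbb{R}^d$ is handled. You apply Theorem \ref{thm:LipOpenCvx} once, globally, on the open convex set $\mathbb{R}^d$, whereas the paper applies it on the unit balls $B_d(x,1)$ (with constants uniform in $x$) and then patches via Proposition \ref{Prop:LocalLipGen}. Your route is legitimate, but the one step you gloss over is exactly the point that the paper's localization is designed to sidestep: on an unbounded domain the mean value inequality alone does not yield the required $\varepsilon$-H\"older bound on $\mathrm{D}^{\lceil\gamma\rceil}F$, since boundedness of $\mathrm{D}^{\lceil\gamma\rceil+1}F$ gives $\|\mathrm{D}^{\lceil\gamma\rceil}F(x)-\mathrm{D}^{\lceil\gamma\rceil}F(y)\|\le M\|x-y\|$, and $\|x-y\|\le \|x-y\|^{\varepsilon}$ fails when $\|x-y\|>1$. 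The repair is immediate with what you already have (for $\|x-y\|\ge 1$ use the uniform bound on $\mathrm{D}^{\lceil\gamma\rceil}F$ itself, so $\|\mathrm{D}^{\lceil\gamma\rceil}F(x)-\mathrm{D}^{\lceil\gamma\rceil}F(y)\|\le 2\|\mathrm{D}^{\lceil\gamma\rceil}F\|_\infty\|x-y\|^{\varepsilon}$), but as written the sentence ``follows from the mean value inequality'' is incomplete; on the bounded set $B_d(0,\delta)$ for $F^{-1}$ there is no such issue. Your explicit Leibniz bookkeeping, showing each derivative is a sum of terms $P_\alpha(y)(1+\|y\|^2)^{-\alpha/2}$ with $\deg P_\alpha\le\alpha$, is a fine (and more detailed) substitute for the paper's appeal to the classical fact that $F$ has bounded derivatives of all orders, and your computation of the inverse and the blow-up of constants as $\delta\to 1$ match the paper.
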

	\begin{proof} $F$ is clearly a homeomorphism and is classically smooth with bounded derivatives at any order. For $n\in\mathbb{N}$ and $\varepsilon\in (0,1)$ and for every $x \in \mathbb{R}^d$, as $F^{(n+1)}$ is bounded on $B_d(x,1)$ by a universal constant then $F^{(n)}$ is $\varepsilon$-H\"older on $B_d(x,1)$ with a H\"older constant that is independent of $x$. Hence, by Theorem \ref{thm:LipOpenCvx}, $F$ is Lip-$(n+\varepsilon)$ on $B_d(x,1)$ with a Lipschitz norm that is independent of $x$. We deduce then by Proposition \ref{Prop:LocalLipGen} that $F$ is Lip-$(n+\varepsilon)$.\\
	The inverse of $F$ is given by 
	\[F^{-1}:y\mapsto \frac{y}{\sqrt{1-\|y\|^2}}.\] 
	The same previous arguments apply to $F^{-1}$ when restricted to $B_d(0,\delta)$ for $\delta \in (0,1)$ (the Lipschitz constants will explode as $\delta$ nears 1.) This concludes this proof.
	\end{proof}
	In order to obtain sharper quantitative results or to include important functions (for example linear maps) for which one of the axioms of Lipschitz regularity fails, a notion of almost Lipschitz regularity has been introduced in \cite{Boutaib}. We reintroduce it here to allow for a certain flexibility in our proofs.
	\begin{Def}\cite{Boutaib} Let $n\in \mathbb{N}^*$, $0< \varepsilon\leq 1$ and $\delta \in (0,\infty)\cup \{\infty\}$. Let $E$ and $F$ be two normed vector spaces and $U$ be a subset of $E$. Consider the maps $f^0:U\to E$, and, for every $k \in [\![1,n]\!]$, $f^k:U\to \mathcal{L}_s(E^{\otimes k},F)$. Denote by $R_0:U\times U\to E$ the remainder map of order $0$ associated to $f=(f^0,f^1,\ldots,f^n)$.	The collection $f$ is said to be almost Lipschitz of degree $n+\varepsilon$ on domains of size $\delta$ of $U$ if $(f^1,\ldots,f^n)$ is $\textrm{Lip}-(n+\varepsilon-1)$ and there exists a non-negative constant $M$, such that
	\[\forall x,y \in U:\quad \|x-y\|< \delta \Rightarrow \|R_0(x,y)\|\leq M \|x-y\|^{n+\varepsilon}.\]
	If $\|R_0\|_{\infty,\delta}$ denotes the smallest value for such a constant $M$, we will denote
	\[\|f\|_{\delta,\textrm{Lip}-(n+\varepsilon)}=\max(\|f^1\|_{\textrm{Lip}-(n+\varepsilon-1)} , \|R_0\|_{\infty,\delta}).\]
	When $\delta$ is infinite, we will merely say that $f$ is almost Lip-$(n+\varepsilon)$ on $U$.
	\end{Def}
	On open convex sets, there exists a simple criterion for identifying almost Lipschitz maps.
	\begin{lemma}\cite{Boutaib}\label{AlmostLipDiffMaps} Let $n\in \mathbb{N}^*$ and $0< \varepsilon\leq 1$. Let $E$ and $F$ be two normed vector spaces and $U$ be an open convex subset of $E$. Let $f:U\to F$ be an $n$-times continuously differentiable map with successive derivatives respectively denoted $f^1,\ldots,f^n$. Then $f$ is almost Lipschitz of degree $n+\varepsilon$ on $U$ if and only if $f^1$ is Lipschitz of degree $n+\varepsilon-1$ on $U$. In this case
	\[\|f\|_{\infty,\textrm{Lip}-(n+\varepsilon)}=\|f^1\|_{\textrm{Lip}-(n+\varepsilon-1)}.\]
	\end{lemma}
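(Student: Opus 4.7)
The strategy is to derive the nontrivial implication and the sharp equality of norms from Taylor's formula with integral remainder; the reverse implication is immediate from the definition of almost Lipschitz regularity.

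One direction is tautological: if $f$ is almost $\textrm{Lip}$-$(n+\varepsilon)$ on $U$, the definition asserts that $(f^1,\ldots,f^n)$ is $\textrm{Lip}$-$(n+\varepsilon-1)$, and one has $\|f^1\|_{\textrm{Lip}-(n+\varepsilon-1)}\leq \|f\|_{\infty,\textrm{Lip}-(n+\varepsilon)}$ simply because the latter is defined as a maximum which includes the former.

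For the converse, I assume $f^1$ is $\textrm{Lip}$-$(n+\varepsilon-1)$ on the open convex set $U$. Since $U$ is convex and $f$ is $C^n$, the classical Taylor formula with integral remainder (in the Banach-space setting) gives, for every $x,y\in U$,
\[
f(x)=\sum_{j=0}^{n-1}\frac{f^{j}(y)(x-y)^{\otimes j}}{j!}+\int_0^1\frac{(1-t)^{n-1}}{(n-1)!}\,f^{n}\bigl(y+t(x-y)\bigr)(x-y)^{\otimes n}\,dt.
\]
Using $\int_0^1(1-t)^{n-1}/(n-1)!\,dt=1/n!$ to absorb the $j=n$ term of the Taylor sum, the order-zero remainder of the definition can be rewritten as
\[
R_0(x,y)=\int_0^1\frac{(1-t)^{n-1}}{(n-1)!}\bigl[f^{n}(y+t(x-y))-f^{n}(y)\bigr](x-y)^{\otimes n}\,dt.
\]
By Theorem \ref{thm:LipOpenCvx} applied to $(f^1,\ldots,f^n)$, which is $\textrm{Lip}$-$((n-1)+\varepsilon)$ on the open set $U$, the map $f^n$ is $\varepsilon$-H\"older with H\"older constant at most $\|f^1\|_{\textrm{Lip}-(n+\varepsilon-1)}$. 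Substituting this bound into the expression for $R_0$ and recognising the resulting integral as a Beta function would yield
\[
\|R_0(x,y)\|\leq \|f^1\|_{\textrm{Lip}-(n+\varepsilon-1)}\,\|x-y\|^{n+\varepsilon}\cdot\frac{\Gamma(\varepsilon+1)}{\Gamma(n+\varepsilon+1)}.
\]
Since $\Gamma(n+\varepsilon+1)=(n+\varepsilon)(n+\varepsilon-1)\cdots(\varepsilon+1)\,\Gamma(\varepsilon+1)\geq \Gamma(\varepsilon+1)$ for $n\geq 1$, the Beta factor is at most $1$, and I would conclude $\|R_0\|_{\infty,\infty}\leq \|f^1\|_{\textrm{Lip}-(n+\varepsilon-1)}$. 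This both establishes that $f$ is almost $\textrm{Lip}$-$(n+\varepsilon)$ and, combined with the trivial inequality above, forces the asserted equality of norms.

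The only delicate point I foresee is having to control the Beta-function constant by $1$ rather than by some $n$-dependent factor, since this is what upgrades a qualitative equivalence into the sharp equality of norms; the verification, however, reduces to a one-line computation with the Gamma function, so no genuine obstacle is expected beyond careful bookkeeping of the Taylor expansion in the multilinear Banach-space setting.
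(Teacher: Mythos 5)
Your argument is correct. The paper itself gives no proof of this lemma (it is quoted from \cite{Boutaib}), and your route --- Taylor's formula with integral remainder on the convex set, rewriting $R_0(x,y)$ as $\int_0^1\frac{(1-t)^{n-1}}{(n-1)!}\bigl[f^n(y+t(x-y))-f^n(y)\bigr](x-y)^{\otimes n}\,dt$ and bounding the Beta-function factor $\Gamma(\varepsilon+1)/\Gamma(n+\varepsilon+1)$ by $1$ --- is the standard one and does deliver the sharp equality of norms, since $\|f\|_{\infty,\textrm{Lip}-(n+\varepsilon)}$ is a maximum whose first entry is $\|f^1\|_{\textrm{Lip}-(n+\varepsilon-1)}$. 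One small simplification: the $\varepsilon$-H\"older bound on $f^n$ with constant exactly $\|f^1\|_{\textrm{Lip}-(n+\varepsilon-1)}$ is already the order-$(n-1)$ remainder condition in Definition~\ref{Def:LipMap} applied to the collection $(f^1,\ldots,f^n)$, so invoking Theorem~\ref{thm:LipOpenCvx} is not needed (though it is harmless here, as $U$ is open).
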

	An example of a consequence of working with almost Lipschitz maps is the following general composition theorem.
	\begin{theo}\label{thm:CompoLip}\cite{Boutaib} Let $E$, $F$ and $G$ be three normed vector spaces. Let $U$ be a subset of $E$ and $V$ be a subset of $F$. Let $\delta>0$ and $\gamma\geq1$. Let $f:U \to F$ be an almost $\textrm{Lip}-\gamma$ map on domains of size $\delta$ of $U$ and $g:V \to G$ be a $\textrm{Lip}-\gamma$ map such that $f(U) \subseteq V$. Then $g \circ f$ is $\textrm{Lip}-\gamma$ and there exists a constant $C_{\delta, \gamma}$ (depending only on $\delta$ and $\gamma$) such that
	\[\|g\circ f\|_{\textrm{Lip}-\gamma} \leq C_{\delta, \gamma}\|g\|_{\textrm{Lip}-\gamma}\max( \|f\|_{\delta,\textrm{Lip}-\gamma}^{\gamma},1).\]
	\end{theo}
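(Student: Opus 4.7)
The plan is to combine a localisation argument based on Proposition~\ref{Prop:LocalLipGen} with a Faà di Bruno expansion and carefully tracked Taylor remainders. I would first reduce to showing that $g \circ f$ is $\textrm{Lip}$-$\gamma$ on each set $B(x_0, \delta/2) \cap U$ (for $x_0 \in U$) with Lipschitz norm bounded uniformly in $x_0$ by the target quantity. On such a ball every pair of points $(x,y)$ satisfies $\|x-y\| < \delta$, so the sharp remainder estimate $\|R_0^f(x,y)\| \leq \|f\|_{\delta,\textrm{Lip}-\gamma}\|x-y\|^{\gamma}$ is available unconditionally there, and the uniform $\textrm{Lip}$-$(\gamma-1)$ bounds on $(f^1,\ldots,f^n)$ (with $n=\lceil\gamma\rceil$) combine with it to yield a linear control of the form $\|f(x)-f(y)\| \leq C_{\delta,\gamma}\,\|f\|_{\delta,\textrm{Lip}-\gamma}\,\|x-y\|$.

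I would then propose as higher-order collection for $h := g \circ f$ the Faà di Bruno expressions
\[
h^{k}(x)(v_1 \otimes \cdots \otimes v_k) \;=\; \sum_{\pi \in \mathcal{P}_k} g^{|\pi|}(f(x))\Bigl( \bigotimes_{B \in \pi} f^{|B|}(x)(v_B) \Bigr), \qquad 0 \leq k \leq n,
\]
where $\mathcal{P}_k$ is the set of set-partitions of $\{1,\ldots,k\}$ and $v_B$ denotes the ordered tensor product of the $v_i$ for $i \in B$. Axiom~(1) of Definition~\ref{Def:LipMap} is then an immediate consequence of the sup-norm bounds on the $g^{j}$ and $f^{i}$: each summand contributes at most $C_{\gamma}\|g\|_{\textrm{Lip}-\gamma}\,\|f\|_{\delta,\textrm{Lip}-\gamma}^{m}$ with $1 \leq m \leq k \leq n < \gamma$, and all such powers are absorbed by $\max(\|f\|_{\delta,\textrm{Lip}-\gamma}^{\gamma},1)$.

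The core of the argument is the remainder bound. For $k=0$, I would Taylor-expand $g$ at $f(y)$ to order $n$, whose remainder satisfies $\|R_0^{g}(f(x),f(y))\| \leq \|g\|_{\textrm{Lip}-\gamma}\|f(x)-f(y)\|^{\gamma}$, and then substitute in each copy of $f(x)-f(y)$ the Taylor expansion of $f$ at $y$ with its almost-Lipschitz remainder $R_0^{f}(x,y)$. The polynomial contribution of total $(x-y)$-degree at most $n$ reassembles exactly to the Faà di Bruno formula for $h$ at $y$, while the residual terms split into three families: polynomial cross terms of $(x-y)$-degree strictly greater than $n$, bounded by $\|x-y\|^{n+1}\leq \delta^{1-\varepsilon}\|x-y\|^{\gamma}$ on the ball; multilinear applications of some $g^{j}$ to products containing at least one factor $R_0^{f}(x,y)$, each of which acquires an extra $\|x-y\|^{\gamma}$ factor; and $R_0^{g}(f(x),f(y))$ itself, controlled via the linear estimate above by $C_{\delta,\gamma}\|g\|_{\textrm{Lip}-\gamma}\,\|f\|_{\delta,\textrm{Lip}-\gamma}^{\gamma}\,\|x-y\|^{\gamma}$. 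This last family is precisely the source of the exponent $\gamma$ on $\|f\|_{\delta,\textrm{Lip}-\gamma}$ in the final bound. For $1 \leq k \leq n$, I would run an analogous Taylor expansion of each factor in the Faà di Bruno formula to order $n-k$; here all inputs are genuine $\textrm{Lip}$-maps, so one may alternatively proceed by induction on $n$ after invoking the known $\textrm{Lip}$-$\textrm{Lip}$ composition estimates on $(f^1,\ldots,f^n)$ and $(g^1,\ldots,g^n)$.

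The main obstacle will be the combinatorial bookkeeping of the many cross terms produced by the substituted Taylor expansion, and in particular verifying that no residual is controlled only by $\|x-y\|^{n+1}$ without the compensating factor $\delta^{1-\varepsilon}$ that downgrades it to $\|x-y\|^{\gamma}$: this is exactly where the size-$\delta$ localisation is indispensable, and where the gap between the $\textrm{Lip}$ and the almost $\textrm{Lip}$ notions must be bridged.
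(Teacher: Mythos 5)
The paper itself gives no proof of Theorem \ref{thm:CompoLip}: it is quoted from \cite{Boutaib}, so there is nothing internal to compare against. Judged on its own, your plan is the standard (and, as far as the cited source goes, the expected) argument, and it is sound: localise to balls of radius $\delta/2$ so that the almost-Lipschitz remainder estimate for $f$ is available unconditionally, derive the linear control $\|f(x)-f(y)\|\leq C_{\delta,\gamma}\|f\|_{\delta,\textrm{Lip}-\gamma}\|x-y\|$, take the Fa\`a di Bruno expressions as the candidate collection for $g\circ f$, verify axiom (1) by sup-norm bounds, and obtain the order-$0$ remainder by substituting the Taylor expansion of $f$ into that of $g$, with the three residual families you list; the exponent $\gamma$ on $\|f\|_{\delta,\textrm{Lip}-\gamma}$ does indeed come from $R_0^{g}(f(x),f(y))$ combined with the linear control, and the degree-$>n$ cross terms are exactly where the size-$\delta$ restriction (giving $\|x-y\|^{n+1}\leq\delta^{1-\varepsilon}\|x-y\|^{\gamma}$) is needed, after which Proposition \ref{Prop:LocalLipGen} globalises the estimate at the cost of another $C_{\delta,\gamma}$. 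The only part left schematic is the order-$k$ remainders for $1\leq k\leq n$; your fallback of inducting on $\lceil\gamma\rceil$, treating the derivative collections $(f^1,\ldots,f^n)$ and $(g^1,\ldots,g^n)$ with the lower-degree composition statement and assembling the products via Proposition \ref{Compobifunc}, does close that step (with the base case $\gamma-1\in(0,1]$ handled directly by the H\"older estimate and the linear control), so no genuine gap remains beyond the combinatorial bookkeeping you already flag.
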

	\begin{Rem} In the above theorem, if $U$ and $V$ are open, then the collection $g \circ f$ is defined by taking $g \circ f$ and its higher derivatives. Otherwise, said collection will consist of $g \circ f$ and ``formal higher derivatives'' obtained by applying the chain rule using the collections $g$ and $f$ (see \cite{Boutaib} for more details.)
	\end{Rem}
	\begin{Rem} In the context of Theorem \ref{thm:CompoLip}, if $f$ is assumed to be $\textrm{Lip}-\gamma$, then there exists a constant $C_\gamma$ depending only on $\gamma$ such that
	\[\|g\circ f\|_{\textrm{Lip}-\gamma} \leq C_{\gamma}\|g\|_{\textrm{Lip}-\gamma}\max( \|f\|_{\textrm{Lip}-(n+\varepsilon)}^{\gamma},1).\]
	\end{Rem}
	A simple case of composition is where one of the maps is linear, although, technically, a continuous linear map defined on the whole space is not Lipschitz in general (but is almost Lipschitz).
	\begin{Prop}\label{CompoLinFunc}\cite{Boutaib} Let $\gamma>0$ and $E$, $F$ and $G$ be three normed vector spaces.
	\begin{itemize}
		\item Let $U$ be a subset of $E$, $f:U \to F$ be a $\textrm{Lip}-\gamma$ map and $u: F\to G$ a bounded linear map. Then $u\circ f$ is $\textrm{Lip}-\gamma$ and $\|u\circ f\|_{\textrm{Lip}-\gamma} \leq \|u\|\|f\|_{\textrm{Lip}-\gamma}$.
		\item Let $v: E\to F$ a bounded linear map and $g:F \to G$ be a $\textrm{Lip}-\gamma$ map. Then $g\circ v$ is $\textrm{Lip}-\gamma$ and $\|g\circ v\|_{\textrm{Lip}-\gamma} \leq \|g\|_{\textrm{Lip}-\gamma} \max(1,\|v\|^{\gamma})$.
	\end{itemize}
	\end{Prop}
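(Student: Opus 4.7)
The plan is to verify both assertions by directly checking the two axioms of Definition \ref{Def:LipMap} for the composed collections, exploiting the linearity of $u$ (respectively $v$) to see that the composition behaves trivially both on the derivative components and on the remainders. Throughout, the ``derivatives'' of $u \circ f$ and $g \circ v$ must be interpreted as the formal collections obtained by applying the chain rule to the given collections, as flagged in the remark following Theorem \ref{thm:CompoLip}.

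For the first statement, the formal collection associated to $u \circ f$ is $(u \circ f^0, u \circ f^1, \ldots, u \circ f^n)$, where $u \circ f^k(x)$ stands for the $k$-linear map sending $v$ to $u(f^k(x)(v))$. Applying $u$ to the Taylor identity defining the remainder $R_k^f$, one checks immediately (using only linearity of $u$) that the remainder of order $k$ for $u \circ f$ at $(x,y)$ equals $u \circ R_k^f(x,y)$. The bounds in axioms (1) and (2) then follow from $\|u(z)\| \leq \|u\| \|z\|$, delivering $\|u \circ f\|_{\textrm{Lip}-\gamma} \leq \|u\| \|f\|_{\textrm{Lip}-\gamma}$.

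For the second statement, because $v$ is linear, its first derivative is $v$ itself (as a constant function of the base point) and its higher derivatives vanish, so the chain rule reduces the formal collection to
\[(g \circ v)^k(x)(w_1 \otimes \cdots \otimes w_k) = g^k(v(x))\bigl(v(w_1) \otimes \cdots \otimes v(w_k)\bigr).\]
A parallel Taylor-expansion computation, using the identity $(v(x)-v(y))^{\otimes(j-k)} = v^{\otimes(j-k)}((x-y)^{\otimes(j-k)})$, shows that the remainder of order $k$ for $g \circ v$ at $(x,y)$ is $R_k^g(v(x),v(y)) \circ v^{\otimes k}$. Plugging in the Lipschitz bounds for $g$ together with $\|v(w)\| \leq \|v\|\|w\|$ and $\|v(x)-v(y)\| \leq \|v\|\|x-y\|$ produces a factor $\|v\|^k$ in axiom (1) and $\|v\|^{n+\varepsilon} = \|v\|^\gamma$ in axiom (2); since $k \leq n < \gamma$, every such factor is majorised by $\max(1, \|v\|^\gamma)$, yielding the claimed bound.

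There is no real obstacle here: both items reduce to linearity-driven bookkeeping. The only point requiring mild care is to confirm that, in the absence of openness of $U$, the ``derivatives'' appearing in axioms (1) and (2) are precisely the formal ones dictated by the chain rule, rather than genuine Fr\'echet derivatives --- the two notions coincide in the open case by Theorem \ref{thm:LipOpenCvx}.
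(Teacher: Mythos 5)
Your proof is correct. The paper does not prove this proposition at all (it is imported from \cite{Boutaib}), and your argument — verifying the two axioms of Definition \ref{Def:LipMap} directly, with remainders $u\circ R_k^{f}$ for the first item and $R_k^{g}(v(x),v(y))\circ v^{\otimes k}$ for the second, giving the factors $\|v\|^{k}$ and $\|v\|^{\gamma-k}\cdot\|v\|^{k}=\|v\|^{\gamma}$ that are absorbed into $\max(1,\|v\|^{\gamma})$ — is the standard one; the only implicit ingredient is the admissibility of the tensor norms (so that $\|v^{\otimes k}(w)\|\leq\|v\|^{k}\|w\|$), a hypothesis the paper explicitly delegates to the references.
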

	We have similar results for the image of a pair of Lipschitz maps by a bilinear map.
	\begin{Prop}\label{Compobifunc}\cite{Boutaib} Let $\gamma>0$. Let $E$, $F$, $G$ and $H$ be normed vector spaces and $U$ be a subset of $E$. Let $f:U \to F$ and $g:U \to G$ be two $\textrm{Lip}-\gamma$ maps. Let $B: F\times G\to H$ be a continuous bilinear map. Then $B(f,g):U \to H$ is $\textrm{Lip}-\gamma$ and there exists a constant $C_\gamma$ such that
	\[\|B(f,g)\|_{\textrm{Lip}-\gamma} \leq C_\gamma\|B\|\|f\|_{\textrm{Lip}-\gamma}\|g\|_{\textrm{Lip}-\gamma}.\]
	\end{Prop}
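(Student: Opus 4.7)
The plan is to exploit the bilinearity of $B$ to reduce to bounded inputs and then apply the composition result Theorem \ref{thm:CompoLip}, handling the low-regularity range $\gamma\leq 1$ by a direct estimate.

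First, since $B$ is bilinear one has $B(\lambda f, \mu g) = \lambda\mu\,B(f,g)$ for scalars $\lambda,\mu>0$, so after discarding the trivial cases in which $\|f\|_{\textrm{Lip}-\gamma}$ or $\|g\|_{\textrm{Lip}-\gamma}$ vanishes, I would normalise and work under $\|f\|_{\textrm{Lip}-\gamma}, \|g\|_{\textrm{Lip}-\gamma} \leq 1$; scaling back at the end restores the bilinear bound. Under this normalisation, the product map $h : U \to F \times G$ given by $x \mapsto (f(x), g(x))$, with $F\times G$ carrying the max norm, takes values in the closed unit ball. A direct inspection of Definition \ref{Def:LipMap} shows that the collection $(h^0,\ldots,h^{\lceil\gamma\rceil})$ defined componentwise by $h^k(x) = (f^k(x), g^k(x))$ is $\textrm{Lip}-\gamma$ on $U$ with norm at most $1$.

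For $\gamma\leq 1$, I would verify the two axioms of Definition \ref{Def:LipMap} directly. Axiom (1) reduces to $\|B(f(x),g(x))\| \leq \|B\|\|f\|_\infty \|g\|_\infty \leq \|B\|$, and axiom (2) follows from the identity
\[
B(f(x),g(x)) - B(f(y),g(y)) = B\bigl(f(x)-f(y),g(x)\bigr) + B\bigl(f(y),g(x)-g(y)\bigr),
\]
yielding a remainder bounded by $2\|B\|\|x-y\|^\gamma$. For $\gamma>1$, I would exploit that $B$, being bilinear, is smooth with derivatives of order $\geq 3$ identically zero and with first two derivatives bounded on any bounded subset of $F\times G$ by universal constants times $\|B\|$. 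Theorem \ref{thm:LipOpenCvx} applied to $B$ restricted to a fixed open convex bounded neighbourhood $V$ of the closed unit ball (say $V = B_{F\times G}(0,2)$) then gives $\|B|_V\|_{\textrm{Lip}-\gamma} \leq C(\gamma)\|B\|$. Applying Theorem \ref{thm:CompoLip} to the $\textrm{Lip}-\gamma$ map $h$ and to $B|_V$ yields
\[
\|B(f,g)\|_{\textrm{Lip}-\gamma} = \|B|_V \circ h\|_{\textrm{Lip}-\gamma} \leq C_\gamma\|B|_V\|_{\textrm{Lip}-\gamma}\max\bigl(\|h\|_{\textrm{Lip}-\gamma}^\gamma,1\bigr) \leq C'_\gamma\|B\|,
\]
and undoing the normalisation produces the desired bilinear estimate in $\|f\|_{\textrm{Lip}-\gamma}$ and $\|g\|_{\textrm{Lip}-\gamma}$.

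The argument is conceptually clean; the main bookkeeping obstacle is checking that the product collection $h$ is genuinely $\textrm{Lip}-\gamma$ on $U$ (its formal remainders are componentwise those of $f$ and $g$, so both axioms of Definition \ref{Def:LipMap} reduce immediately to the corresponding axioms for $f$ and $g$ via the max norm on $F\times G$) and in splitting off the low-regularity range $\gamma\leq 1$ so that Theorem \ref{thm:CompoLip} is only invoked in its stated regime $\gamma\geq 1$.
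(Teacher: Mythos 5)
Your argument is correct, but note that the paper itself gives no proof of this proposition: it is quoted from the reference [Boutaib], where the standard proof is a direct verification — one takes as the collection for $B(f,g)$ the Leibniz-type family $\sum_{k}\binom{j}{k}B(f^{k},g^{j-k})$ (suitably symmetrised), splits each remainder by bilinearity exactly as in your identity $B(f(x),g(x))-B(f(y),g(y))=B(f(x)-f(y),g(x))+B(f(y),g(x)-g(y))$, and estimates term by term, which yields the bilinear bound $C_\gamma\|B\|\|f\|_{\textrm{Lip}-\gamma}\|g\|_{\textrm{Lip}-\gamma}$ immediately and exhibits the collection explicitly. Your route is genuinely different and perfectly legitimate within the toolkit the paper quotes: normalising $f$ and $g$, bundling them into $h=(f,g)$ (whose Lip-$\gamma$ character with the max norm is indeed immediate, since $h^k$ acts on the same $E^{\otimes k}$ as $f^k,g^k$), checking via Theorem \ref{thm:LipOpenCvx} that the restriction of $B$ to the open convex set $B_{F\times G}(0,2)$ is Lip-$\gamma$ with norm $\lesssim_\gamma\|B\|$ (its derivatives of order $\ge 3$ vanish and the top derivative is constant or affine, hence H\"older on a bounded set), and then invoking the composition theorem in its $\gamma\ge 1$ regime, with the separate elementary treatment of $\gamma\le 1$. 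What the normalisation buys you is precisely the bilinear form of the estimate, which the composition bound $\max(\|h\|_{\textrm{Lip}-\gamma}^{\gamma},1)$ would otherwise destroy; what the direct Leibniz proof buys is an explicit canonical collection for $B(f,g)$ (rather than the ``formal derivatives'' produced by the composition theorem on a non-open $U$) and no case split in $\gamma$. The only loose end in your write-up is the choice of norms on $(F\times G)^{\otimes k}$ needed to speak of $B|_V$ being Lip-$\gamma$, but the paper explicitly brushes aside tensor-norm compatibility questions, so this is not a substantive gap.
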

	We will mostly use Proposition \ref{Compobifunc} for composition maps of the type $x\mapsto g(x)\circ f(x)$, with $f$ and $g$ being Lipschitz maps taking values in appropriate spaces of linear maps. An example of such result is the following lemma about the Lipschitz regularity of pullbacks of one-forms, which will be of use to us in a subsequent section.
	\begin{lemma}\label{lemma:PullBackOneForm} Let $\gamma> 1$ and $\delta>0$. Let $E$, $F$ and $G$ be normed vector spaces and $U$ be a subset of $E$. Let $f:U \to F$ be an almost Lip-$\gamma$ map on domains of size $\delta$ of $U$ and $\alpha:F\to \mathcal{L}(F,G)$ be a Lip-$(\gamma-1)$ one-form. Then $f^*\alpha$ is a Lip-$(\gamma-1)$ one-form and there exists a constant $C_{\delta, \gamma}$ such that
	\[\|f^*\alpha\|_{\textrm{Lip}-(\gamma-1)} \leq
	C_{\delta, \gamma}\|\alpha\|_{\textrm{Lip}-(\gamma-1)}
	\|f_*\|_{\textrm{Lip}-(\gamma-1)}
	\max( \|f\|_{\delta,\textrm{Lip}-\gamma}^{\gamma-1},1)
	.\]
	\end{lemma}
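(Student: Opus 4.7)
My plan is to express the pullback as a bilinear combination of two $\textrm{Lip}-(\gamma-1)$ maps and then invoke Proposition \ref{Compobifunc}. Unwinding definitions, for $x\in U$ and $v\in E$ one has
\[(f^*\alpha)(x)(v) = \alpha(f(x))\bigl(f_*(x)(v)\bigr),\]
so $(f^*\alpha)(x) = B\bigl(\alpha(f(x)),f_*(x)\bigr)$, where $B:\mathcal{L}(F,G)\times \mathcal{L}(E,F)\to \mathcal{L}(E,G)$ is the continuous bilinear composition $(u,w)\mapsto u\circ w$, whose norm is at most $1$.

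First I would observe that $f_*$ is, under the canonical identification of tangent spaces of ambient vector spaces, nothing but $f^1$, whose $\textrm{Lip}-(\gamma-1)$ regularity is built into the very definition of $f$ being almost $\textrm{Lip}-\gamma$ on domains of size $\delta$; in particular $\|f_*\|_{\textrm{Lip}-(\gamma-1)}\leq \|f\|_{\delta,\textrm{Lip}-\gamma}$, so this factor is essentially free.

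The core of the argument will be to show that $\alpha\circ f:U\to \mathcal{L}(F,G)$ is $\textrm{Lip}-(\gamma-1)$. If $\gamma\geq 2$, I would first check that $f$ is also almost $\textrm{Lip}-(\gamma-1)$ on domains of size $\min(\delta,1)$: writing the order-$(\lceil\gamma\rceil-1)$ remainder of $f^0$ in terms of the order-$\lceil\gamma\rceil$ one via
\[R_0^{(\lceil\gamma\rceil-1)}(x,y) = R_0^{(\lceil\gamma\rceil)}(x,y) + \tfrac{1}{\lceil\gamma\rceil!}f^{\lceil\gamma\rceil}(y)(x-y)^{\otimes \lceil\gamma\rceil},\]
and using the uniform bound on $f^{\lceil\gamma\rceil}$ (automatic from $(f^1,\ldots,f^{\lceil\gamma\rceil})$ being $\textrm{Lip}-(\gamma-1)$) gives the required Hölder bound on the shorter remainder when $\|x-y\|\leq 1$. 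Theorem \ref{thm:CompoLip} applied at regularity $\gamma-1$ then yields
\[\|\alpha\circ f\|_{\textrm{Lip}-(\gamma-1)}\leq C_{\delta,\gamma}\|\alpha\|_{\textrm{Lip}-(\gamma-1)}\max\bigl(\|f\|_{\delta,\textrm{Lip}-\gamma}^{\gamma-1},1\bigr).\]
If $1<\gamma<2$, the notion of almost $\textrm{Lip}-(\gamma-1)$ is not available and I would argue directly: here $\textrm{Lip}-(\gamma-1)$ amounts to boundedness plus $(\gamma-1)$-Hölder continuity. Boundedness of $\alpha\circ f$ is inherited from that of $\alpha$, while the estimate
\[\|f(x)-f(y)\|\leq \bigl(\|f^1\|_\infty + \|f\|_{\delta,\textrm{Lip}-\gamma}\|x-y\|^{\gamma-1}\bigr)\|x-y\|\]
for $\|x-y\|<\delta$ (obtained from the $R_0$ bound in the almost Lipschitz definition) makes $f$ locally Lipschitz on balls of radius $\delta$ with a uniform constant; composing with the $(\gamma-1)$-Hölder map $\alpha$ gives the required local Hölder estimate on $\alpha\circ f$, which Proposition \ref{Prop:LocalLipGen} promotes to a global one.

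Plugging the two factors into Proposition \ref{Compobifunc} finishes the proof, since the inequality
\[\|f^*\alpha\|_{\textrm{Lip}-(\gamma-1)}\leq C_\gamma\|B\|\,\|\alpha\circ f\|_{\textrm{Lip}-(\gamma-1)}\,\|f_*\|_{\textrm{Lip}-(\gamma-1)}\]
multiplied by the bound on $\|\alpha\circ f\|_{\textrm{Lip}-(\gamma-1)}$ produces exactly the claimed estimate. The one genuine obstacle I anticipate is the low-regularity case $1<\gamma<2$, where the almost-Lipschitz formalism does not encode the output exponent $\gamma-1$ and a direct Hölder computation (together with the local-to-global mechanism of Proposition \ref{Prop:LocalLipGen}) is needed; everything else reduces to standard applications of the composition estimates collected in Section \ref{sec:LipFlows}.
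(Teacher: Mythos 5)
Your proof follows essentially the same route as the paper: write $f^*\alpha=B(f_*,\alpha\circ f)$ with $B$ the bilinear composition map, obtain the Lip-$(\gamma-1)$ regularity of $\alpha\circ f$ from the composition theorem (Theorem \ref{thm:CompoLip}), read off that of $f_*=f^1$ from the definition of almost Lipschitz regularity, and conclude with Proposition \ref{Compobifunc}. The only difference is that you spell out how the composition estimate is applied at the mismatched level $\gamma-1$ (including the low-regularity case $1<\gamma<2$), a point the paper glosses over by citing Theorem \ref{thm:CompoLip} directly; this is a refinement of, not a departure from, the paper's argument.
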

	\begin{proof}
	By the composition theorem \ref{thm:CompoLip}, $\alpha\circ f$ is Lip-$(\gamma-1)$ and there exists a constant $C_{\delta,\gamma}$ such that 
	\[\|\alpha\circ f\|_{\textrm{Lip}-(\gamma-1)} \leq C_{\delta, \gamma}\|\alpha\|_{\textrm{Lip}-(\gamma-1)}\max( \|f\|_{\delta,\textrm{Lip}-\gamma}^{\gamma-1},1).\]
	By the definition of almost Lipschitz regularity, $f_*$ is Lip-$(\gamma-1)$. Considering the bilinear map
	\[B: (u,v)\in \mathcal{L}(E,F)\times \mathcal{L}(F,G)
	\longmapsto
	v \circ u \in \mathcal{L}(E,G),\]
	we conclude, by applying Proposition \ref{Compobifunc}, that $f^*\alpha=B(f_*,\alpha\circ f)$ is Lip-$(\gamma-1)$ with the desired bound on its Lip-$(\gamma-1)$ norm.
	\end{proof}
	In the rest of this paper, Lipschitz diffeomorphisms will be key in transporting Lipschitz structures.
	\begin{Def} Let $\gamma > 0$. Let $E$ and $F$ be two normed vector spaces, $U$ be a subset of E and $V$ a subset of $F$. A map $f:U\to V$ is said to be a $\textrm{Lip}-\gamma$ diffeomorphism if $f$ is $\textrm{Lip}-\gamma$ and bijective and $f^{-1}$ is also $\textrm{Lip}-\gamma$. We define in a similar way almost $\textrm{Lip}-\gamma$ diffeomorphisms.
	\end{Def}
	One of the central arguments in building the charts and thus the manifold structure in \cite{Sussmann} is the constant rank theorem. To reproduce this argument in our setting, we will need a Lipschitz version of said theorem. The following is a simplified statement of the original Lipschitz constant rank theorem that appeared in \cite{Boutaib}. It can be obtained from the original version by simple arguments of restricting the maps to bounded sets and rescaling.
	\begin{theo}[Constant Rank]\label{ConstantRankTheo} {\cite{Boutaib}}
	Let $\gamma>1$ and $n, d, p \in \mathbb{N}^*$. Let $U$ be an open subset of $\mathbb{R}^n$ and $\varphi:U\to \mathbb{R}^d$ be a Lip-$\gamma$ map (whose derivative at each point is) of rank at most $p$. Let $x_0 \in U$ such that $\mathrm{d}\varphi(x_0)$ is of rank $p$. Then, there exist
	\begin{itemize}
	\item a $\textrm{Lip}-\gamma$ diffeomorphism $f:U_0\to B_n(0,1)$ defined on an open subset $U_0$ of $\mathbb{R}^n$ containing $x_0$ and such that $f(x_0)=0$ and,
	\item a $\textrm{Lip}-\gamma$ diffeomorphism $g:W\to B_d(0,1)$ defined on an open subset $W$ of $\mathbb{R}^d$ containing $\varphi(x_0)$ and such that $g(\varphi(x_0))=0$,
	\end{itemize}
	such that, for all $(x_1,\ldots,x_p)\in B_n(0,1)$
	\[g\circ \varphi \circ f^{-1}(x_1,\ldots,x_n)=(x_1,\ldots,x_p,0,\ldots,0) \in B_d(0,1),\]
	i.e., the following diagram commutes
		\[\begin{array}{ccc}
	 U_0(\subseteq \mathbb{R}^n)&\overset{\varphi}\longrightarrow&(\varphi(U_0)\subseteq)W(\subseteq \mathbb{R}^d)\\
	f\Big\downarrow&&\Big\downarrow g\\
	B_n(0,1)&\overset{\pi_{n,p,d}}\longrightarrow& B_d(0,1)\\
	\end{array}\]
	\end{theo}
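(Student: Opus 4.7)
The plan is to reduce the statement to its original form in \cite{Boutaib} via restriction and rescaling, or alternatively (if one wishes to reprove it from scratch) to adapt the classical smooth constant rank proof while carefully tracking Lipschitz norms using the machinery of Section \ref{sec:LipFlows}. Since the statement explicitly says that the simplified version follows from the original, I would first take this route and then comment on what would be needed for a self-contained argument.

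For the reduction route, I would start by invoking the original Lipschitz constant rank theorem from \cite{Boutaib}, which produces a Lip-$\gamma$ chart $\tilde f$ around $x_0$ and a Lip-$\gamma$ chart $\tilde g$ around $\varphi(x_0)$, defined on some open neighbourhoods $\tilde U_0 \subseteq U$ and $\tilde W \subseteq \mathbb{R}^d$, under which $\tilde g \circ \varphi \circ \tilde f^{-1}$ coincides with the projection $\pi_{n,p,d}$ on the image of $\tilde U_0$ under $\tilde f$. The task is then purely cosmetic: shrink $\tilde U_0$ to an open subset $U_0$ such that $\tilde f(U_0)$ is contained in some open Euclidean ball $B_n(\tilde f(x_0),r)$, do likewise for $\tilde g(\tilde W)$ while ensuring $\pi_{n,p,d}$ maps the smaller ball into the smaller target ball, then post-compose $\tilde f$ and $\tilde g$ with the affine maps $y\mapsto (y-\tilde f(x_0))/r$ and $z\mapsto (z-\tilde g(\varphi(x_0)))/r'$ respectively, followed by the Lip-$\gamma$ diffeomorphism $F^{-1}$ of Lemma \ref{lemma:GlobalLipDiff} applied to the open ball $B_n(0,1)\supseteq B_n(0,\delta)$ to obtain a bijection onto the whole unit ball. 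Lipschitz regularity of these compositions is guaranteed by Theorem \ref{thm:CompoLip} together with Proposition \ref{CompoLinFunc} applied to the affine parts, while commutativity of the diagram is preserved since $\pi_{n,p,d}$ commutes with the appropriate diagonal rescalings.

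If instead one wants to reprove the theorem in the Lipschitz setting directly, the plan would follow the classical smooth blueprint. First, after permuting coordinates in source and target, one may assume that the upper-left $p\times p$ block of the Jacobian of $\varphi$ at $x_0$ is invertible. One then defines the Lip-$\gamma$ map $\Phi:U\to \mathbb{R}^n$, $\Phi(x)=(\varphi^1(x),\ldots,\varphi^p(x),x_{p+1},\ldots,x_n)$, whose derivative at $x_0$ is invertible, so that the Lipschitz inverse function theorem (also from \cite{Boutaib}) gives a local Lip-$\gamma$ inverse $\Phi^{-1}$. The composition $\varphi\circ \Phi^{-1}$ then has the form $(y_1,\ldots,y_n)\mapsto(y_1,\ldots,y_p,\psi_{p+1}(y),\ldots,\psi_d(y))$, and the hypothesis that the rank of $\mathrm{d}\varphi$ stays bounded by $p$ forces $\partial_{y_j}\psi_i\equiv 0$ for $j>p$, so each $\psi_i$ depends only on $(y_1,\ldots,y_p)$. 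One then defines $g(z)=(z_1,\ldots,z_p,z_{p+1}-\psi_{p+1}(z_1,\ldots,z_p),\ldots,z_d-\psi_d(z_1,\ldots,z_p))$, which is a Lip-$\gamma$ diffeomorphism by Theorem \ref{thm:CompoLip} and Proposition \ref{CompoLinFunc}, and one finishes by rescaling as above.

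The main obstacle, in both routes, is the rescaling-to-the-unit-ball bookkeeping: one must shrink the domains consistently so that all intermediate Lip-$\gamma$ norms stay finite and so that the targets are exactly $B_n(0,1)$ and $B_d(0,1)$, not merely diffeomorphic images of them. Lemma \ref{lemma:GlobalLipDiff} is the essential tool here because naive linear rescaling only produces maps onto balls of arbitrary radius; one needs the Lip-$\gamma$ diffeomorphism $F$ between $\mathbb{R}^d$ and $B_d(0,1)$ to turn any open bounded domain diffeomorphic to $\mathbb{R}^n$ into the open unit ball while keeping the commutativity with $\pi_{n,p,d}$ intact. Everything else is a routine application of the composition and embedding theorems (Theorems \ref{thm:CompoLip} and \ref{EmbedLip}) to propagate Lipschitz regularity through the charts.
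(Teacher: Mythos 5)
Your primary route --- quote the original constant rank theorem of \cite{Boutaib} and reduce to the stated normal form by restriction and rescaling --- is exactly what the paper does: the theorem is cited, not reproved, and the paper's only comment is that the simplified statement can be obtained from the original one ``by simple arguments of restricting the maps to bounded sets and rescaling.'' Your alternative from-scratch sketch (the classical constant-rank argument run through a Lipschitz inverse function theorem) is a reasonable but heavier route that the paper does not take, so the reduction is the part to get right.

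However, the specific mechanism you propose for landing exactly on the unit balls is flawed. First, the map $F$ of Lemma \ref{lemma:GlobalLipDiff} does not commute with $\pi_{n,p,d}$: writing $F_n$ and $F_d$ for the maps in dimensions $n$ and $d$, one has $\pi_{n,p,d}(F_n(y))=\pi_{n,p,d}(y)/\sqrt{1+\|y\|^2}$ while $F_d(\pi_{n,p,d}(y))=\pi_{n,p,d}(y)/\sqrt{1+\|\pi_{n,p,d}(y)\|^2}$, and these differ whenever $y$ has a nonzero component in the last $n-p$ coordinates; conjugating the charts by $F$ therefore destroys the normal form rather than preserving it. Second, $F^{-1}$ restricted to $B_n(0,\delta)$ with $\delta<1$ has image a bounded ball, not all of $\mathbb{R}^n$, so the composition you describe is not onto $B_n(0,1)$. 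Both detours are also unnecessary, because, contrary to your claim, plain affine rescaling suffices: choose $r>0$ with $B_n(0,r)\subseteq \tilde f(\tilde U_0)$ and $B_d(0,r)\subseteq \tilde g(\tilde W)$ (charts centred at $0$), set $U_0=\tilde f^{-1}(B_n(0,r))$, $W=\tilde g^{-1}(B_d(0,r))$, $f=\tfrac{1}{r}\tilde f$ and $g=\tfrac{1}{r}\tilde g$. Then $f$ and $g$ are Lip-$\gamma$ diffeomorphisms onto exactly $B_n(0,1)$ and $B_d(0,1)$ (Proposition \ref{CompoLinFunc}; on bounded domains the dilation causes no loss of Lipschitz regularity), $\varphi(U_0)\subseteq W$ because $\pi_{n,p,d}(B_n(0,r))\subseteq B_d(0,r)$, and the diagram still commutes because $\pi_{n,p,d}$ is linear and hence commutes with the common dilation by $1/r$. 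With that correction your reduction coincides with the one the paper intends.
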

	\subsection{Flows of Lipschitz vector fields}
	We first recall the notions of integral curves and flows. The existence and uniqueness results are very classical in the case of $\mathcal{C}^1$ vector fields and trivial for Lip-$1$ ones using for example Picard-Lindel\"of's theorem. 
	\begin{Def} Let $d\in\mathbb{N}^*$ and $f$ be a Lip-$1$ vector field on $\mathbb{R}^d$. For $y \in \mathbb{R}^d$, we call integral curve of $f$ starting at $y$ the unique solution $\gamma_y$ to the differential equation
	\[
	\left\{\begin{array}{l}
	\mathrm{d}\gamma_y(t)=f(\gamma_y(t))\mathrm{d}t, \quad \forall  t \in \mathbb{R},\\
	\gamma_y(0)=y.\\
	\end{array}
	\right.
	\]
	The restriction of an integral curve of $f$ to an arbitrary interval will still be called an integral curve of $f$. We call flow of the vector field $f$ and denote $\tilde{f}$ the map
	\[\tilde{f}: (t,y)\in \mathbb{R}\times \mathbb{R}^d 
	\longmapsto \gamma_y(t) \in \mathbb{R}^d .
	\]
	We will denote by $\tilde{f}_t$, for $t\in\mathbb{R}$, the map $y\mapsto \tilde{f}(t,y)$; and by $\tilde{f}_y$, for $y\in \mathbb{R}^d$, the integral curve $t\mapsto \tilde{f}(t,y)$.
	\end{Def}
	We will need quantitative results on the differentiability and Lipschitz regularity of flows. The analogous qualitative results for smooth vector fields are classical.
	\begin{theo}{\cite{Boutaib}}\label{theo:LipRegFlow}
	Let $d \in \mathbb{N}^*$ and $\gamma>1$. Let $f$ be a $\textrm{Lip}-\gamma$ vector field on $\mathbb{R}^d$. Then
	\begin{enumerate}
	\item $\forall T,r\in\mathbb{R}^*_+, \forall y\in \mathbb{R}^d:
	\quad\tilde{f}((-T,T)\times B_d(y,r))\subseteq B_d(y,r+T\|f\|_{\textrm{Lip}-1})$,
	\item $\tilde{f}$ is almost $\textrm{Lip}-\gamma$ on $(-T,T)\times \mathbb{R}^d$ and there exists a constant $C$ depending only on $T$, $\gamma$ and $\|f\|_{\textrm{Lip}-\gamma}$ such that $\|\mathrm{d}\tilde{f}\|_{\textrm{Lip}-(\gamma-1)}\leq C$.
	\end{enumerate}
	\end{theo}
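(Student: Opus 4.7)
The plan is to prove part (1) directly from the integral form of the flow, and to derive part (2) from the variational equations for the derivatives of $\tilde{f}$ together with Gronwall-type estimates, finally invoking Lemma \ref{AlmostLipDiffMaps} to obtain the almost $\textrm{Lip}-\gamma$ conclusion.

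For part (1): The defining ODE gives the integral representation $\tilde{f}(t,z)-z=\int_0^t f(\tilde{f}(s,z))\,\mathrm{d}s$, and $\|f\|_\infty\leq \|f\|_{\textrm{Lip}-1}$ by the very definition of the Lip-$1$ norm. Hence for $|t|<T$ one has $\|\tilde{f}(t,z)-z\|<T\|f\|_{\textrm{Lip}-1}$, and a triangle inequality with $\|z-y\|<r$ immediately yields the claimed inclusion.

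For part (2): Since $(-T,T)\times \mathbb{R}^d$ is open and convex, by Lemma \ref{AlmostLipDiffMaps} it suffices to show that $\tilde{f}$ is $\lceil\gamma\rceil$-times continuously differentiable with $\mathrm{d}\tilde{f}$ being $\textrm{Lip}-(\gamma-1)$, with quantitative control depending only on $T$, $\gamma$ and $\|f\|_{\textrm{Lip}-\gamma}$. The time-partial derivative is given directly by the flow equation $\partial_t \tilde{f}(t,y)=f(\tilde{f}(t,y))$. For the spatial Jacobian $J(t,y)=\partial_y \tilde{f}(t,y)$, standard differentiation of parameters gives the linear variational equation
\[\partial_t J(t,y)=\mathrm{d}f(\tilde{f}(t,y))\,J(t,y),\qquad J(0,y)=\mathrm{Id}_{\mathbb{R}^d},\]
whose solution obeys $\|J(t,y)\|\leq e^{T\|f\|_{\textrm{Lip}-\gamma}}$ by Gronwall. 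An induction on $k\in\{1,\ldots,\lceil\gamma\rceil\}$ gives that $\partial_y^k \tilde{f}$ satisfies an analogous linear ODE with an inhomogeneous term that is a universal polynomial in the lower-order spatial derivatives of $\tilde{f}$ and the derivatives $\mathrm{d}^j f\circ \tilde{f}$ for $j\leq k$; each application of Gronwall provides a uniform bound. The mixed time derivatives are then recovered by differentiating the flow equation and applying the composition theorem \ref{thm:CompoLip} together with Proposition \ref{Compobifunc}.

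The main obstacle will be establishing Hölder control of the top-order spatial derivative, which is what distinguishes $\textrm{Lip}-(\gamma-1)$ from mere boundedness of derivatives. The $\lceil\gamma\rceil$-th spatial derivative satisfies a linear ODE whose inhomogeneous term contains $\mathrm{d}^{\lceil\gamma\rceil}f\circ \tilde{f}$; by the $\textrm{Lip}-\gamma$ hypothesis, $\mathrm{d}^{\lceil\gamma\rceil}f$ is only $(\gamma-\lceil\gamma\rceil)$-Hölder, and this regularity must be propagated through the ODE. The idea is to bound $\|\partial_y^{\lceil\gamma\rceil}\tilde{f}(t,y)-\partial_y^{\lceil\gamma\rceil}\tilde{f}(t,y')\|$ by writing the difference of the two linear ODEs, using the already-established Lipschitz bound on $\tilde{f}$ (obtained from the Jacobian estimate) to control $\|\tilde{f}(s,y)-\tilde{f}(s,y')\|^{\gamma-\lceil\gamma\rceil}$, and then applying Gronwall once more to conclude the appropriate Hölder estimate. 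Combined with the analogous estimate in the time variable—which follows from differentiating the flow equation—this yields the $\textrm{Lip}-(\gamma-1)$ bound on $\mathrm{d}\tilde{f}$, with all constants depending solely on $T$, $\gamma$ and $\|f\|_{\textrm{Lip}-\gamma}$. Lemma \ref{AlmostLipDiffMaps} then produces the almost $\textrm{Lip}-\gamma$ conclusion for $\tilde{f}$.
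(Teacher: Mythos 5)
The paper does not actually prove Theorem \ref{theo:LipRegFlow}: it is imported verbatim from \cite{Boutaib}, so there is no in-paper argument to compare yours against. Judged on its own terms, your proposal is the standard and essentially correct route, and almost certainly the same in spirit as the cited proof. Part (1) is complete as written, since $\|f\|_\infty\leq\|f\|_{\textrm{Lip}-1}$ is immediate from Definition \ref{Def:LipMap} with $k=0$. For part (2), the reduction via Lemma \ref{AlmostLipDiffMaps} is legitimate because $(-T,T)\times\mathbb{R}^d$ is open and convex, and it is exactly the right way to sidestep the unboundedness of $\tilde{f}$ itself (which is why only \emph{almost} Lip-$\gamma$ can hold). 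The variational-equation-plus-Gronwall induction, the propagation of the $(\gamma-\lceil\gamma\rceil)$-H\"older modulus of $\mathrm{d}^{\lceil\gamma\rceil}f$ through the top-order linear equation using the already-established spatial Lipschitz bound $\|\tilde{f}(s,y)-\tilde{f}(s,y')\|\leq e^{T\|\mathrm{d}f\|_\infty}\|y-y'\|$, and the recovery of time and mixed derivatives from $\partial_t\tilde{f}=f\circ\tilde{f}$ are all sound and give constants depending only on $T$, $\gamma$ and $\|f\|_{\textrm{Lip}-\gamma}$. Two points deserve more care than your sketch gives them: (i) the existence and continuity of the higher spatial derivatives of the flow for a vector field that is only $C^{\lceil\gamma\rceil}$ with H\"older top derivative is invoked as ``standard differentiation of parameters'' but is precisely the technical content one must supply (an inductive fixed-point or difference-quotient argument for each variational equation); and (ii) since $\mathrm{d}\tilde{f}$ must be Lip-$(\gamma-1)$ \emph{jointly} in $(t,y)$, you need the H\"older estimate for all mixed derivatives of total order $\lceil\gamma\rceil$, not just the pure spatial and pure time ones --- this follows from your scheme (the time direction is even Lipschitz, hence H\"older on the bounded interval $(-T,T)$), but it should be stated, and Theorem \ref{thm:LipOpenCvx} (direction (A2)$\Rightarrow$(A1) on the convex domain) is the clean way to convert these pointwise derivative and H\"older bounds into the Lip-$(\gamma-1)$ norm bound before invoking Lemma \ref{AlmostLipDiffMaps}.
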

	\section{Lipschitz geometry}\label{sec:LipGeo}
	\subsection{Lipschitz Manifolds}\label{subsec:LipManifolds}
	We now recall and build on the definition of Lipschitz manifolds as laid out in \cite{CLL2}.
	\begin{Def}\cite{CLL2}\label{Def:LipManCLL2} Let $\gamma>0$. Let $n\in\mathbb{N}^*$ and let $M$ be an $n$-topological manifold. Let $I$ be a countable set and, for every $i\in I$, $U_i$ be an open subset of $M$ and $\phi_i:M\to\mathbb{R}^n$ be a compactly supported map such that its restriction on $U_i$ defines a homeomorphism. We say that $M$ is a Lipschitz-${\gamma}$ manifold with the Lipschitz-${\gamma}$ atlas $((U_i, \phi_i))_{i\in I}$ if the following properties are satisfied
	\begin{itemize}\item $(U_i)_{i\in I}$ is a pre-compact locally finite cover of $M$;
	\item For every $i\in I$, $\phi_i(U_i)=B_n(0,1)$;
	\item There exists $\delta\in(0,1)$, such that $(U_i^{\delta})_{i\in I}$ covers $M$, where, for every $i\in I$,
	\[U_i^{\delta}=\left({\phi_i}_{|U_i}\right)^{-1}(B_n(0,1-\delta));\]
	\item There exists $L>0$, such that, for every $i,j\in I$, $\phi_j\circ{({\phi_i}_{|U_i})}^{-1}:B_n(0,1)\to\mathbb{R}^n$ is Lipschitz-$\gamma$ and $\|\phi_j\circ{({\phi_i}_{|U_i})}^{-1}\|_{\textrm{Lip}-\gamma}\leq L$.
	\end{itemize}
	\end{Def}
	Finite dimensional vector spaces are naturally endowed with Lipschitz structures \cite{CLL2}. In the remaining of this paper, we will explicitly refer to the structure defined by the following proposition.
	\begin{Prop}\label{Prop:EuclidLipManifoldEuclid} Let $\gamma\geq 1$ and $n\in\mathbb{N}^*$. Denote by $(e_1,\ldots,e_n)$ the canonical basis of $\mathbb{R}^n$. Let $\varphi$ be a Lip-$\gamma$ extension on $\mathbb{R}^n$ of $\mathrm{Id}_{B_n(0,1)}$ with support in $B_n(0,2)$. For $x\in \mathbb{R}^n$, let $\varphi_x$ be the map $y\mapsto \varphi(y-x)$. Finally, let $I$ be the countable subset of $\mathbb{R}^n$ defined by
	\[I=\left\{\sum_{i=1}^{n}\frac{k_i}{\sqrt{n}} e_i,\;\; k_1,\ldots,k_n \in \mathbb{Z}\right\}.\]
	Then  $(B_n(x,1),\varphi_x)_{x\in I}$ is a Lip-$\gamma$ atlas on $\mathbb{R}^n$.
	\end{Prop}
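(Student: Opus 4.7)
The plan is to verify the four bullet points of Definition \ref{Def:LipManCLL2} in turn, with the main content concentrated in the covering estimate forced by the specific lattice spacing $1/\sqrt{n}$ and in exploiting the translation invariance of the Lip-$\gamma$ norm for the transition maps.

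First I would check the cover and local finiteness. Each $B_n(x,1)$ is pre-compact, and for every $z\in\mathbb{R}^n$, $z\in B_n(x,1)$ iff $x\in B_n(z,1)$; since the lattice $I$ has grid spacing $1/\sqrt{n}$, the number of lattice points inside $B_n(z,1)$ is bounded uniformly in $z$, giving local finiteness. For any $z\in\mathbb{R}^n$, $z$ lies in some axis-aligned cube of side $1/\sqrt{n}$ whose vertices belong to $I$; the diagonal of such a cube has length $\sqrt{n\cdot(1/\sqrt{n})^2}=1$, so there is a vertex $x\in I$ with $\|z-x\|\le 1/2$. Hence the balls $B_n(x,1)$ not only cover $\mathbb{R}^n$ but do so with margin.

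Next I would address the chart data. Since $\varphi=\mathrm{Id}$ on $B_n(0,1)$, the map $\varphi_x$ restricted to $B_n(x,1)$ is just the translation $y\mapsto y-x$, which is a homeomorphism from $B_n(x,1)$ onto $B_n(0,1)$; in particular $\varphi_x(B_n(x,1))=B_n(0,1)$. The map $\varphi_x$ extends to all of $\mathbb{R}^n$ with compact support contained in $B_n(x,2)$. For the $\delta$-cover, any choice $\delta\in(0,1/2)$ works: $U_x^{\delta}=(\varphi_x|_{B_n(x,1)})^{-1}(B_n(0,1-\delta))=B_n(x,1-\delta)$, and the previous paragraph's estimate $\|z-x\|\le 1/2<1-\delta$ shows that $(U_x^{\delta})_{x\in I}$ still covers $\mathbb{R}^n$.

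For the uniform Lip-$\gamma$ bound on transition maps, I would compute directly: for $x,y\in I$ and $z\in B_n(0,1)$,
\[
\varphi_y\circ(\varphi_x|_{B_n(x,1)})^{-1}(z)=\varphi_y(z+x)=\varphi(z+(x-y)).
\]
Thus each transition map is a translate of $\varphi$ restricted to $B_n(0,1)$. Lip-$\gamma$ regularity is translation-invariant at every order (both the sup-norms of successive derivatives and the H\"older remainders are unaffected by pre-composition with a translation), so
\[
\bigl\|\varphi_y\circ(\varphi_x|_{B_n(x,1)})^{-1}\bigr\|_{\textrm{Lip}-\gamma}\le\|\varphi\|_{\textrm{Lip}-\gamma},
\]
uniformly in $x,y\in I$, and one takes $L=\|\varphi\|_{\textrm{Lip}-\gamma}$.

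The only mildly delicate ingredient is the covering estimate, which is why the spacing $1/\sqrt{n}$ (rather than, say, $1$) is imposed in the statement: it is exactly what is needed to guarantee that the circumscribed sphere of a fundamental cube has radius $1/2$, leaving room for a uniform $\delta>0$. Everything else reduces to direct inspection and the translation invariance noted above; no nontrivial result beyond the existence of a compactly supported Lip-$\gamma$ extension of $\mathrm{Id}_{B_n(0,1)}$ (which, if needed, follows from Lemma \ref{lemma:GlobalLipDiff} combined with a standard Lip-$\gamma$ cut-off) is required.
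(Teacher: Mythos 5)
Your proof is correct. The paper states Proposition \ref{Prop:EuclidLipManifoldEuclid} without proof (deferring to \cite{CLL2}), and your verification of the axioms of Definition \ref{Def:LipManCLL2} — the $1/\sqrt{n}$ lattice geometry giving a cover with margin $\delta\in(0,1/2)$, the translates acting as the identity on $B_n(x,1)$, and translation invariance of the Lip-$\gamma$ norm giving the uniform bound $L=\|\varphi\|_{\textrm{Lip}-\gamma}$ on the transition maps $z\mapsto\varphi(z+(x-y))$ — is exactly the intended routine check. One tiny point: your local-finiteness sentence literally establishes point-finiteness; for local finiteness take, say, the neighbourhood $B_n(z,1)$ of $z$, which meets $B_n(x,1)$ only when $\|x-z\|<2$, and there are only finitely many such lattice points $x\in I$.
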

	\begin{Rem}
	It is straightforward to endow the Cartesian product of two Lipschitz manifolds (with the same regularity) with a natural Lipschitz structure (c.f. \cite{CLL2}).
	\end{Rem}
	Lipschitz structures can be canonically transported by homeomorphisms, irrespective of their regularity.
	\begin{Prop}\label{Prop:TransportLipStructure} Let $\gamma\geq 1$. Let $M$ and $N$ be two topological spaces. Let $F: M\to N$ be a homeomorphism. We assume that $M$ is a Lip-$\gamma$ manifold. Then $N$ has a natural Lip-$\gamma$ structure induced by $F$.
	\end{Prop}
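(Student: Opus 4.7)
The plan is to transport the Lipschitz atlas from $M$ to $N$ via $F$ in the most naive way possible, and to verify each clause of Definition~\ref{Def:LipManCLL2} in turn. Writing $((U_i,\phi_i))_{i\in I}$ for a Lip-$\gamma$ atlas on $M$ with parameters $\delta$ and $L$, I would define on $N$ the candidate atlas
\[\bigl(V_i,\psi_i\bigr)_{i\in I}\quad\text{with}\quad V_i:=F(U_i),\qquad \psi_i:=\phi_i\circ F^{-1}:N\to\mathbb{R}^n.\]
Since $F$ is a homeomorphism, $N$ is automatically a topological $n$-manifold, each $V_i$ is open in $N$, and each $\psi_i$ is continuous. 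The support of $\psi_i$ equals $F(\mathrm{supp}(\phi_i))$, hence is compact; and $\psi_i|_{V_i}=(\phi_i|_{U_i})\circ (F^{-1}|_{V_i})$ is a composition of two homeomorphisms onto the same target $B_n(0,1)$, so it is itself a homeomorphism $V_i\to B_n(0,1)$.

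Next I would verify the covering properties. Pre-compactness of each $V_i$ follows from $\overline{V_i}=F(\overline{U_i})$ and preservation of compactness by $F$. Local finiteness transfers because for any $y=F(x)\in N$, a neighbourhood $W\ni x$ meeting only finitely many $U_i$ produces a neighbourhood $F(W)\ni y$ meeting only finitely many $V_i$. For the $\delta$-shrinkage, a direct computation gives
\[\bigl(\psi_i|_{V_i}\bigr)^{-1}(B_n(0,1-\delta))=F\circ\bigl(\phi_i|_{U_i}\bigr)^{-1}(B_n(0,1-\delta))=F(U_i^\delta),\]
so the sets $V_i^\delta$ are the images of the $U_i^\delta$ under $F$ and thus cover $N$ with the same parameter $\delta$.

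The crucial observation — which is what makes the statement essentially free — is that the transition maps are identical to those on $M$:
\[\psi_j\circ\bigl(\psi_i|_{V_i}\bigr)^{-1}=(\phi_j\circ F^{-1})\circ\bigl(F\circ (\phi_i|_{U_i})^{-1}\bigr)=\phi_j\circ\bigl(\phi_i|_{U_i}\bigr)^{-1}.\]
Hence the Lip-$\gamma$ regularity and the uniform bound $L$ are inherited verbatim from the atlas on $M$, with no appeal to any of the composition or transport lemmas from Section~\ref{sec:LipFlows}.

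There is really no hard step: the whole statement is a compatibility check, and the only mild subtlety is making sure the definition of Lipschitz manifold is respected in every detail (notably the compact support of the chart maps, and that the shrunken cover property transfers with the \emph{same} $\delta$). The regularity of $F$ plays no role at all, which is exactly the content of the proposition, and explains the qualifier \emph{``irrespective of their regularity''} in the preceding sentence of the paper.
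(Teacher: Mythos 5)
Your proof is correct and follows exactly the same route as the paper: transport the atlas by setting $V_i=F(U_i)$, $\psi_i=\phi_i\circ F^{-1}$, and observe that the transition maps (and hence all Lipschitz bounds) are unchanged. The paper simply declares this verification ``trivial''; you have supplied the details, including the compact-support and $\delta$-shrunken-cover checks, so nothing is missing.
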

	\begin{proof}
	Let $(U_i, \varphi_i)_{i\in I}$ be Lip-$\gamma$ atlas on $M$. For each $i\in I$, we define $V_i=F(U_i)$ and $\psi_i = \phi_i \circ F^{-1}$. It is then trivial to show that $(V_i, \psi_i)_{i\in I}$ defines a Lip-$\gamma$ atlas on $N$.
	\end{proof}
	\begin{Rem}
	In the context of Proposition \ref{Prop:TransportLipStructure}, the local representation of the function $F$ in this induced structure is the identity function, which is obviously smooth (and Lipschitz when restricted to a bounded domain).
	\end{Rem}
	Recall that in the context of Definition \ref{Def:LipManCLL2}, a single chart does not constitute a Lipschitz atlas. Consequently, one cannot construct a Lipschitz structure on the unit ball using only the identity map. In this regard, Proposition \ref{Prop:TransportLipStructure} provides us with  a simple way to show that the unit ball has indeed a Lipschitz structure. We will see in the next subsection that this structure is also natural.
	\begin{Cor}\label{UnitBallLip} For every $\gamma\geq 1$, the unit ball in the Euclidean space has a structure of a Lip-$\gamma$ manifold.
	\end{Cor}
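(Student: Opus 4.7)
The plan is to realise the unit ball as the homeomorphic image of a space that has already been equipped with a Lipschitz-$\gamma$ structure, and then to simply push that structure forward. All ingredients are present in the excerpt: Proposition \ref{Prop:EuclidLipManifoldEuclid} endows $\mathbb{R}^d$ with a canonical Lip-$\gamma$ atlas, Lemma \ref{lemma:GlobalLipDiff} supplies the specific homeomorphism $F: \mathbb{R}^d \to B_d(0,1)$ defined by $F(y)=y/\sqrt{1+\|y\|^2}$, and Proposition \ref{Prop:TransportLipStructure} tells us that any homeomorphism suffices to transport a Lipschitz structure from its source to its target.

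Concretely, I would proceed in three short steps. First, invoke Proposition \ref{Prop:EuclidLipManifoldEuclid} for the integer $d$ to fix an explicit Lip-$\gamma$ atlas $(B_d(x,1), \varphi_x)_{x\in I}$ on $\mathbb{R}^d$. Second, note from Lemma \ref{lemma:GlobalLipDiff} that $F$ is a homeomorphism between the topological spaces $\mathbb{R}^d$ and $B_d(0,1)$ (with the subspace topology inherited from $\mathbb{R}^d$). Third, apply Proposition \ref{Prop:TransportLipStructure} with $M=\mathbb{R}^d$ and $N=B_d(0,1)$: this produces a Lip-$\gamma$ atlas $(F(B_d(x,1)), \varphi_x \circ F^{-1})_{x\in I}$ on $B_d(0,1)$, and thus the desired Lipschitz structure.

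Since the argument is a direct composition of results already established, there is no genuine obstacle; the only point worth a sentence of commentary is the conceptual one flagged by the author just before the statement, namely that one cannot shortcut the construction by using the single chart $\mathrm{Id}_{B_d(0,1)}$, because Definition \ref{Def:LipManCLL2} requires an atlas of pre-compact charts whose images equal $B_n(0,1)$ and which cover $M$ by the smaller balls $B_n(0,1-\delta)$ for some $\delta\in(0,1)$. Transport via $F$ circumvents this by importing the countable, locally finite atlas of $\mathbb{R}^d$ wholesale. It is also worth remarking, for use in the next subsection where the naturalness of this structure will be addressed, that the local expression of the map $F$ itself in this induced atlas is the identity, in line with the remark following Proposition \ref{Prop:TransportLipStructure}.
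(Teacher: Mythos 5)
Your argument is correct and is exactly the paper's proof: the author likewise transports the canonical Lip-$\gamma$ structure of $\mathbb{R}^d$ (Proposition \ref{Prop:EuclidLipManifoldEuclid}) to the unit ball via the homeomorphism $F$ of Lemma \ref{lemma:GlobalLipDiff}, using Proposition \ref{Prop:TransportLipStructure}. Nothing is missing; your additional remarks on why the single chart $\mathrm{Id}_{B_d(0,1)}$ does not suffice simply echo the discussion preceding the statement.
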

	\begin{proof}
	It suffices to consider the homeomorphism of Lemma \ref{lemma:GlobalLipDiff} in the context of Proposition \ref{Prop:TransportLipStructure}. 
	\end{proof}
	\subsection{Lipschitz regularity on manifolds}\label{subsec:LipManifoldMaps}
	We introduce now the notion of Lipschitz regula\-rity of maps and one-forms on Lipschitz manifolds.
	\begin{Def}\cite{CLL2} Let $\gamma_0 \geq\gamma>0$ and $n\in\mathbb{N}^*$. Let $M$ be a Lip-$\gamma_0$ $n$-dimensional manifold and $((U_i, \phi_i))_{i\in I}$ be a Lip-$\gamma_0$ atlas on $M$. Let $V$ be a Banach space.
	\begin{itemize}
	\item We say that a map $f:M\to V$ is Lip-$\gamma$ if there exists a non-negative constant $C$, such that, for every $i\in I$, $f\circ{({\phi_i}_{|U_i})}^{-1}:B_n(0,1)\to V$ is Lip-$\gamma$ with a norm less than or equal to $C$. The smallest such constant $C$ is then called the Lip-$\gamma$ norm of $f$ and is denoted by $\|f\|_{\textrm{Lip}-\gamma}$.
	\item Assume that $\gamma\in(0,\gamma_0-1]$. We say that a $V$-valued one-form $\alpha$ is Lip-$\gamma$ if there exists a non-negative constant $C$, such that, for every $i\in I$ the pullback
	\[({\phi_i}_{|U_i}^{-1})^*\alpha: B_n(0,1)\to \mathcal{L}(\mathbb{R}^n,V)\]
	is a Lip-$\gamma$ one-form with a norm less than or equal to $C$. The smallest such constant $C$ is then called the Lip-$\gamma$ norm of $\alpha$ and is denoted by $\|\alpha\|_{\textrm{Lip}-\gamma}$.
	\end{itemize}	 
	\end{Def}
	\begin{Rem} It is easy to check that the classical and manifold notions of Lipschitz regularity are the same on finite-dimensional vector spaces (endowed with their natural Lipschitz structure of Proposition \ref{Prop:EuclidLipManifoldEuclid}).
	\end{Rem}
	\begin{Rem} It is straightforward to generalise the above definition of Lipschitz regularity on manifolds to maps and one-forms that are not globally defined. In this case, one checks only the Lipschitz regularity (of the local representations) on (images of) the intersections of the domain of definition with the chart domains (c.f. \cite{CLL2}).
	\end{Rem}
	The following lemma is going to be of technical use to us later.
	\begin{lemma}\label{lemma:InvChartLip} Let $p\in \mathbb{N}^*$ and $\gamma\geq 1$. Let $V$ be a subset of a normed vector space. We assume that there exists a homeomorphism $F:\mathbb{R}^p\to V$ that is Lip-$\gamma$. Then the inverses of the chart maps in the $F$-induced Lip-$\gamma$ atlas on $V$ are Lip-$\gamma$ in the classical sense with a uniform upper bound on their Lip-$\gamma$ norms.
	\end{lemma}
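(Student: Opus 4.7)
The plan is to exploit the very explicit form of the induced atlas so that each inverse chart map appears as $F$ precomposed with a pure translation, which in turn reduces the claim to a direct application of Theorem \ref{thm:LipOpenCvx}. First I would make the atlas explicit: by Proposition \ref{Prop:EuclidLipManifoldEuclid} the charts on $\mathbb{R}^p$ are $(B_p(x,1),\varphi_x)_{x\in I}$ with $\varphi_x(y)=\varphi(y-x)$, and since $\varphi$ coincides with the identity on $B_p(0,1)$, the restriction $\varphi_x|_{B_p(x,1)}$ equals the translation $y\mapsto y-x$, whose inverse is $z\mapsto z+x$. Applying Proposition \ref{Prop:TransportLipStructure} then yields an atlas on $V$ whose restricted chart maps have inverse precisely
\[g_x:B_p(0,1)\to V,\qquad g_x(z)=F(z+x),\quad x\in I.\]

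Now write $\gamma=n+\varepsilon$ with $n=\lceil\gamma\rceil$ and $\varepsilon\in(0,1]$. Applying the implication \textbf{(A1)}$\Rightarrow$\textbf{(A2)} of Theorem \ref{thm:LipOpenCvx} to $F$ on the open convex set $\mathbb{R}^p$, I would obtain that $F$ is $n$-times continuously differentiable with global sup norms $\|F^{(k)}\|_\infty\leq \|F\|_{\textrm{Lip}-\gamma}$ for every $0\leq k\leq n$, and with $F^{(n)}$ being $\varepsilon$-H\"older on $\mathbb{R}^p$ of constant at most $\|F\|_{\textrm{Lip}-\gamma}$. Since differentiation is translation-invariant, $g_x^{(k)}(z)=F^{(k)}(z+x)$ for every $k\in\{0,\ldots,n\}$ and every $z\in B_p(0,1)$. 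Consequently all the quantities $\|g_x^{(k)}\|_\infty$ on $B_p(0,1)$ and the $\varepsilon$-H\"older constant of $g_x^{(n)}$ on $B_p(0,1)$ are dominated by $\|F\|_{\textrm{Lip}-\gamma}$, uniformly in $x\in I$.

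It then suffices to invoke the converse implication \textbf{(A2)}$\Rightarrow$\textbf{(A1)} of Theorem \ref{thm:LipOpenCvx} on the open convex set $B_p(0,1)$: each $g_x$ is Lip-$\gamma$ in the classical sense with $\|g_x\|_{\textrm{Lip}-\gamma}\leq \|F\|_{\textrm{Lip}-\gamma}$, which is the desired uniform bound. The only subtlety worth highlighting is that one should resist the temptation to apply the composition theorem (Theorem \ref{thm:CompoLip}) directly to $g_x=F\circ(z\mapsto z+x)$: the Lip-$\gamma$ norm of the translation on $B_p(0,1)$ includes a sup-norm term of order $1+\|x\|$, so that route would produce a bound of order $(1+\|x\|)^\gamma$ which diverges with $x$ and destroys the uniformity. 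Verifying \textbf{(A2)} pointwise on $B_p(0,1)$ sidesteps this loss entirely, and no other step of the plan presents any real difficulty.
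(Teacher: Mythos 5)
Your proof is correct, and it rests on the same basic identification as the paper's: the inverses of the induced chart maps on $V$ are precisely $F$ read through the inverses of the Euclidean charts, i.e.\ the maps $z\mapsto F(z+x)$ on $B_p(0,1)$. Where you diverge is in how the uniform bound is justified. The paper's proof is shorter and more abstract: it observes that these maps are exactly the local representations of $F$ viewed as a map on the Lip-$\gamma$ manifold $\mathbb{R}^p$, and then invokes the (remarked) coincidence of classical and manifold Lipschitz regularity on $\mathbb{R}^p$ to conclude that the local representations are uniformly Lip-$\gamma$. You instead verify the uniformity by hand, exploiting the explicit translation structure of the charts from Proposition \ref{Prop:EuclidLipManifoldEuclid} and passing through the characterisation of Theorem \ref{thm:LipOpenCvx} on the open convex sets $\mathbb{R}^p$ and $B_p(0,1)$; this is more self-contained (it does not lean on the classical-versus-manifold equivalence) and even produces the sharp bound $\|g_x\|_{\textrm{Lip}-\gamma}\leq\|F\|_{\textrm{Lip}-\gamma}$. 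Your warning about not feeding $g_x=F\circ(z\mapsto z+x)$ into Theorem \ref{thm:CompoLip} is well taken, since that route loses uniformity in $x$; note, though, that an even quicker repair is available: Definition \ref{Def:LipMap} is purely quantitative and its remainder terms depend only on differences of points, so precomposition with a translation preserves the Lip-$\gamma$ norm exactly, which gives the same uniform bound without invoking differentiability at all.
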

	\begin{proof}
	If $(B_p(x,1),\phi_x)_{x\in I}$ denotes a Lip-$\gamma$ atlas on $\mathbb{R}^p$ (Proposition \ref{Prop:EuclidLipManifoldEuclid}), then the corresponding atlas on $V$ is given by $(F(B_p(x,1)),\phi_x\circ F^{-1})_{x\in I}$. Note that the maps $(F\circ \phi_x^{-1})_{x\in I}$ are the local representations of the Lip-$\gamma$ map $F$ on $\mathbb{R}^p$. Therefore, they are Lip-$\gamma$ with norms bounded uniformly from above.
	\end{proof}
	Next, we show that Lipschitz regularity is preserved on manifolds endowed with a Lipschitz structure induced by a Lipschitz homeomorphism.
	\begin{lemma}\label{lemma:LipMapBallManifold} Let $p\in \mathbb{N}^*$ and $\gamma\geq 1$. Let $V$ be a subset of a normed vector space and $f$ be a Banach-space valued Lip-$\gamma$ map defined on $V$. We assume that there exists a homeomorphism $F:\mathbb{R}^p\to V$ that is Lip-$\gamma$. Then $f$ is Lip-$\gamma$ on $V$ in the manifold sense in the $F$-induced Lipschitz structure.
	\end{lemma}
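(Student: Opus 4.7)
The plan is to unwind the definitions and reduce the claim to a direct application of the composition theorem together with Lemma \ref{lemma:InvChartLip}. Let $(B_p(x,1),\phi_x)_{x\in I}$ denote the canonical Lip-$\gamma$ atlas on $\mathbb{R}^p$ provided by Proposition \ref{Prop:EuclidLipManifoldEuclid}. By construction of the $F$-induced structure in Proposition \ref{Prop:TransportLipStructure}, the atlas on $V$ is $(F(B_p(x,1)),\phi_x\circ F^{-1})_{x\in I}$; in particular, the inverses of the restricted chart maps are precisely the maps $F\circ \phi_x^{-1}:B_p(0,1)\to V$. Hence, proving that $f$ is Lip-$\gamma$ on $V$ in the manifold sense amounts to exhibiting a uniform-in-$x$ bound on the classical Lip-$\gamma$ norm of the local representatives $f\circ F\circ \phi_x^{-1}:B_p(0,1)\to W$.

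I would then invoke Lemma \ref{lemma:InvChartLip}, which states exactly that the family $(F\circ \phi_x^{-1})_{x\in I}$ consists of classical Lip-$\gamma$ maps whose Lip-$\gamma$ norms admit a common upper bound, say $K$. Since $f$ is assumed to be classically Lip-$\gamma$ on $V$, the composition $f\circ (F\circ\phi_x^{-1})$ falls within the scope of Theorem \ref{thm:CompoLip} (using the remark following that theorem, which covers the case when the inner map is genuinely Lip-$\gamma$ rather than merely almost Lip-$\gamma$). This yields a constant $C_\gamma$ depending only on $\gamma$ such that
\[
\|f\circ F\circ \phi_x^{-1}\|_{\textrm{Lip}-\gamma}
\;\leq\; C_\gamma\,\|f\|_{\textrm{Lip}-\gamma}\,\max\bigl(\|F\circ\phi_x^{-1}\|_{\textrm{Lip}-\gamma}^{\gamma},1\bigr)
\;\leq\; C_\gamma\,\|f\|_{\textrm{Lip}-\gamma}\,\max(K^\gamma,1).
\]
The right-hand side is independent of $x\in I$, which is exactly the uniform bound required by the manifold-sense definition of Lip-$\gamma$ regularity; therefore $f$ is Lip-$\gamma$ on $V$ endowed with the $F$-induced Lipschitz structure.

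The proof is essentially a bookkeeping exercise: the substantive content has already been distilled into Lemma \ref{lemma:InvChartLip} and Theorem \ref{thm:CompoLip}. The only point that needs minor care is matching hypotheses in the composition theorem — specifically noticing that $F\circ \phi_x^{-1}$ is truly Lip-$\gamma$ (not just almost Lip-$\gamma$) so that one avoids having to track an extra $\delta$-parameter, and that the uniformity in $x$ provided by Lemma \ref{lemma:InvChartLip} survives composition because $C_\gamma$ in Theorem \ref{thm:CompoLip} depends only on $\gamma$. No other obstacles are anticipated.
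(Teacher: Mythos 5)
Your proof is correct and follows essentially the same route as the paper: both identify the local representatives in the $F$-induced atlas as $f\circ F\circ\phi_x^{-1}$ and bound their Lip-$\gamma$ norms uniformly via the composition theorem, the only (immaterial) difference being that you group the composition as $f\circ(F\circ\phi_x^{-1})$ using Lemma \ref{lemma:InvChartLip}, whereas the paper first notes that $f\circ F$ is classically Lip-$\gamma$ on $\mathbb{R}^p$ and then invokes the equivalence of the classical and manifold notions there.
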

	\begin{proof}
	Consider the Lip-$\gamma$ atlas ${(U_x, \phi_x)}_{x\in I}$ on $\mathbb{R}^p$ constructed in Proposition \ref{Prop:EuclidLipManifoldEuclid}. Since $f\circ F$ is Lip-$\gamma$ on $\mathbb{R}^p$ (in the classical sense and hence also in the manifold sense), then there exists a constant $C$ such that, for all $x\in I$, $f\circ F \circ \phi_x^{-1}$ is Lip-$\gamma$ with a norm bounded from above by $C$. This concludes the proof (by the definitions of Lipschitz regularity on manifolds and the $F$-induced Lipschitz structure on $V$).
	\end{proof}
	\begin{Rem}\label{rem:LipMapUnitBall}
	By applying Lemma \ref{lemma:LipMapBallManifold} in the case of the unit ball, a map defined on the unit ball that is Lipschitz in the classical sense is also Lipschitz in the manifold sense when the unit ball is endowed with the Lipschitz structure of Corollary \ref{UnitBallLip}.
	\end{Rem}
	Like in the Euclidean setting (Whitney's extension theorems, e.g. \cite{Stein}), Lipschitz maps defined on compact subsets of Lipschitz manifolds and with values in a finite dimensional space can be extended.
	\begin{theo}\label{ExtendLipMapManifold}\cite{CLL2} Let $\gamma_0\geq\gamma>0$ such that $\gamma_0\geq 1$. Let $M$ be a Lip-$\gamma_0$ manifold and $K$ be a precompact subset of $M$. Let $f$ be a $V$-valued Lip-$\gamma$ map defined on $K$, where $V$ is a finite dimensional space. Then we can extend $f$ to a compactly supported Lip-$\gamma$ map on $M$ with a Lip-$\gamma$ norm depending only on $\|f\|_{\textrm{Lip}-\gamma}$, $M$, $K$, $\gamma_0$ and $\gamma$.
	\end{theo}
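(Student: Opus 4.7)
The plan is to reduce the problem to the classical Euclidean Whitney extension theorem via the chart structure, and then patch the local extensions using a Lipschitz partition of unity on $M$. Since $K$ is precompact and the atlas cover $(U_i)_{i\in I}$ is locally finite, only finitely many indices $i_1,\dots,i_N$ are such that $U_{i_j}^{\delta}$ meets $K$, and the $\delta$-shrinkings still cover $K$. This finiteness is what will let us avoid any infinite summation in the final extension and what will allow us to keep a uniform control of the Lip-$\gamma$ norm.

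First I would consider, for each $j\in \{1,\ldots,N\}$, the compact set $K_j=\phi_{i_j}(K\cap U_{i_j}^{\delta})\subseteq B_n(0,1-\delta)$ and the local representation $f\circ\phi_{i_j}^{-1}:K_j\to V$. By the definition of Lip-$\gamma$ regularity on $M$, this map is Lip-$\gamma$ in the classical Euclidean sense on $K_j$, with norm at most $\|f\|_{\textrm{Lip}-\gamma}$. As $V$ is finite-dimensional, the classical Whitney extension theorem (see \cite{Stein}) furnishes a Lip-$\gamma$ extension $g_j:\mathbb{R}^n\to V$, with $\|g_j\|_{\textrm{Lip}-\gamma}\leq C \|f\|_{\textrm{Lip}-\gamma}$ for some constant $C$ depending only on $n$, $\gamma$ and $\delta$. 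In parallel, I would build a Lip-$\gamma_0$ partition of unity $(\rho_j)_{j=1}^{N}$ on $M$ subordinate to $(U_{i_j})_j$ with $\sum_j\rho_j\equiv 1$ on an open neighborhood of $K$: each $\rho_j$ is obtained by transporting through $\phi_{i_j}$ a compactly supported bump function in $B_n(0,1)$ equal to $1$ on $B_n(0,1-\delta/2)$, and then renormalising (which, by Theorem \ref{thm:CompoLip} and Proposition \ref{Compobifunc}, preserves Lip-$\gamma_0$ regularity since the sum is bounded away from $0$ on a neighborhood of $K$).

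I would then define the candidate extension $\tilde f:M\to V$ by
\[
\tilde f(x)=\sum_{j=1}^{N}\rho_j(x)\,g_j(\phi_{i_j}(x))
\]
when the corresponding term is nonzero (and $0$ outside the union of the supports of the $\rho_j$), where $\phi_{i_j}(x)$ is only required to make sense where $\rho_j(x)\neq 0$, i.e.\ in $U_{i_j}$. By construction $\tilde f$ agrees with $f$ on $K$ and is compactly supported. To check Lip-$\gamma$ regularity on $M$, I would examine, for an arbitrary chart $(U_k,\phi_k)$ of the atlas, the local representation $\tilde f\circ\phi_k^{-1}$ on $B_n(0,1)$: it is a finite sum of products of the form $(\rho_j\circ\phi_k^{-1})\cdot(g_j\circ\phi_{i_j}\circ\phi_k^{-1})$. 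Each factor is Lip-$\gamma$ on $B_n(0,1)$ with a uniform bound: the partition of unity factor by construction, and the second factor by the Lip-$\gamma_0\leq\textrm{Lip-}\gamma$ control on transition maps from Definition \ref{Def:LipManCLL2} combined with the composition theorem (Theorem \ref{thm:CompoLip}) and Theorem \ref{EmbedLip}. Bilinear composition (Proposition \ref{Compobifunc}) then handles the products, and summing the $N$ terms yields the required uniform Lip-$\gamma$ bound.

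The main obstacle will be the Lipschitz bookkeeping in the last step: making sure that the constants produced by chaining Theorem \ref{thm:CompoLip}, Proposition \ref{Compobifunc} and the classical Whitney theorem can be absorbed into a single constant depending only on $\|f\|_{\textrm{Lip}-\gamma}$, $M$, $K$, $\gamma_0$ and $\gamma$. This requires knowing that the transition-map Lip-$\gamma_0$ norms in the atlas are uniformly bounded (which is built into Definition \ref{Def:LipManCLL2}), that the number $N$ of relevant charts depends only on $M$ and $K$ (via local finiteness and precompactness), and that the partition-of-unity norms depend only on $M$, $K$ and the fixed bump function — none of which involve $f$. Once these uniformities are in place, the bound on $\|\tilde f\|_{\textrm{Lip}-\gamma}$ has exactly the announced dependencies.
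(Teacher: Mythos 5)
Your overall route --- finitely many charts meeting $K$ (local finiteness plus precompactness), the classical Whitney--Stein extension applied to each local representation, and a Lipschitz partition of unity to glue --- is the natural one (the paper itself offers no proof here; the statement is quoted from \cite{CLL2}). However, as written, the gluing step fails to reproduce $f$ on $K$. You extend $f\circ\phi_{i_j}^{-1}$ only from $K_j=\phi_{i_j}(K\cap U_{i_j}^{\delta})\subseteq \overline{B_n(0,1-\delta)}$, while the weight $\rho_j$ comes from a bump equal to $1$ on $B_n(0,1-\delta/2)$ and supported only inside $B_n(0,1)$; hence there are points $x\in K\cap U_{i_j}$ with $\rho_j(x)>0$ but $\phi_{i_j}(x)\notin \overline{B_n(0,1-\delta)}$, where $g_j(\phi_{i_j}(x))$ has no reason to coincide with $f(x)$. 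At such a point $\tilde f(x)=\sum_j\rho_j(x)\,g_j(\phi_{i_j}(x))$ is a convex combination of $f(x)$ with spurious values, so the asserted identity ``$\tilde f$ agrees with $f$ on $K$'' is false for your construction. The repair is standard but must be made explicit: either apply Whitney to the larger compact set $\overline{\phi_{i_j}(K\cap U_{i_j})}\cap\overline{B_n(0,1-\delta')}$, with $\delta'$ chosen so that $B_n(0,1-\delta')$ contains the support of the bump, or shrink the bumps so that $\{\rho_j\neq 0\}\cap K\subseteq \left({\phi_{i_j}}_{|U_{i_j}}\right)^{-1}(K_j)$ while keeping $\sum_j\rho_j\equiv 1$ on a neighbourhood of $K$ (possible since the $U_i^{\delta}$ cover $M$). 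Note also that Whitney's theorem requires a closed set, so one should extend from the closure of $K_j$; the Lip-$\gamma$ bounds pass to the closure without loss.

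A second, smaller gap is the renormalisation of the partition of unity: $\sum_k\beta_k$ is bounded below only on a neighbourhood $W$ of $K$, so $\beta_j/\sum_k\beta_k$ is neither defined nor Lipschitz on all of $M$. You need an extra cutoff $\chi$, equal to $1$ on $K$ and supported in $W$ (built by the same chart-by-chart device), and then $\rho_j=\chi\beta_j/\sum_k\beta_k$, extended by $0$ outside $\operatorname{supp}\chi$. With these two corrections, the rest of your argument is sound: each factor $\rho_j\circ\phi_k^{-1}$ and $g_j\circ\phi_{i_j}\circ\phi_k^{-1}$ is Lip-$\gamma$ on $B_n(0,1)$ with bounds uniform in $k$ (using the uniform Lip-$\gamma_0$ transition-map bound from Definition \ref{Def:LipManCLL2}, Theorem \ref{EmbedLip} and Theorem \ref{thm:CompoLip}), Proposition \ref{Compobifunc} handles the products, the number $N$ of terms depends only on $M$ and $K$, and the resulting bound on $\|\tilde f\|_{\textrm{Lip}-\gamma}$ has exactly the announced dependencies.
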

	\begin{Rem} In Theorem \ref{ExtendLipMapManifold}, the dependence on $M$ is through its Lipschitz structure (and the subsequent choice of a partition of unity) while the dependence on $K$ is through the set  of chart domains that intersect $K$.
	\end{Rem}
	\subsection{Lipschitz transports}\label{subsec:LipManifoldConn}
	The notion of transport will be key in translating the definition of solutions to RDEs from the classical to the manifold setting.
	\begin{Def}\label{Def:LipConnMan}
	Let $M$ and $N$ be two Lip-$\gamma$ manifolds of dimensions $n$ and $d$ respectively. We call transport from $M$ to $N$ a map $g$ defined on $M\times N$ such that, for all $x\in M$, $g(x,.) \in \mathcal{L}(T_xM,\mathcal{\tau}(N) )$. For such a map and for $(U,\phi)$ and $(V,\psi)$ charts in $M$ and $N$ respectively, we define $g_{(\phi,\psi)}$ by the following. For all $(x,y)\in \phi(U)\times \psi(V)$, $v \in \mathbb{R}^{n}$,
	\[
	g_{(\phi,\psi)}(x,y)(v)= \psi_*(\psi^{-1}(y))\left(g(\phi^{-1}(x),\psi^{-1}(y))(\phi^{-1})_*(x)(v)\right)
	\in \mathbb{R}^{d}.
	\]	
	We say that $g$ is Lip-$\gamma$ if there exists a constant $C$ such that for all charts $(U,\phi)$ and $(V,\psi)$ in $M$ and $N$ respectively, the map $g_{(\phi,\psi)}$  is Lip-$\gamma$ with norm less than $C$.
	\end{Def}
	\begin{example}
	If $E$ and $F$ are two vector spaces and $f: E\times F \to \mathcal{L}(E,F)$ is a map, then there exists a natural transport from $E$ to $F$ induced by $f$.
	\end{example}
	\begin{Rem}
	We will also sometimes consider the extension of Definition \ref{Def:LipConnMan} to transports that are not globally defined.
	\end{Rem}
	On the one hand, we will see later that RDEs (and by extension ODEs and flow equations) on a Lipschitz manifold admit a unique global solution when the vector fields define a Lipschitz transport map (with enough Lipschitz regularity in the case of RDEs). On the other hand, by Lemma \ref{lemma:LipMapBallManifold} and Remark \ref{rem:LipMapUnitBall}, vector fields on the unit ball that are Lipschitz in the classical sense are also Lipschitz in the manifold sense. However, it is an elementary exercise to construct smooth vector fields so that their flows starting from the unit ball leave the unit ball in a finite time. The next lemma reconciles these two arguments and sheds more light on Lipschitz geometry: Lipschitz vector fields do not necessarily define Lipschitz transports on the entire unit ball.
	\begin{lemma}\label{lemma:VFasLipConnMan}
	Let $n$ and $p$ be two positive integers. Let $f:B_{p}(0,1)\to \mathcal{L}(\mathbb{R}^{n}, \mathbb{R}^{p})$ be a Lip-$\gamma$ map. Define a transport $g$ from $\mathbb{R}^{n}$ to $B_{p}(0,1)$ by
	\[g(x,y):  v_x \in \mathbb{R}^{n} \longmapsto f(y)(v_x) \in \mathbb{R}^{p} ,\;\;\; \textrm{ for all } (x,y) \in \mathbb{R}^{n}\times B_{p}(0,1).\]
	Then :
	\begin{enumerate}
	\item $g$ is Lip-$\gamma$ on $\mathbb{R}^{n}\times B_{p}(0,\delta)$, for every $ \delta \in (0,1)$.
	\item if $f$ is compactly supported, then $g$ is Lip-$\gamma$ on $\mathbb{R}^{n}\times B_{p}(0,1)$.
	\end{enumerate}
	\end{lemma}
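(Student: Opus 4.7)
The plan is to unwind the manifold Lipschitz condition for $g$ into a classical Lipschitz estimate for a single auxiliary map on $\mathbb{R}^p$. I would work with the natural atlas $(B_n(i,1),\phi_i)_{i\in I}$ on $\mathbb{R}^n$ from Proposition \ref{Prop:EuclidLipManifoldEuclid} and the $F$-induced atlas $(V_j,\psi_j)_{j\in J}$ on $B_p(0,1)$ from Corollary \ref{UnitBallLip}, where $\psi_j=\sigma_j\circ F^{-1}$; on the respective unit balls this gives $\phi_i^{-1}(x)=x+i$ and $\psi_j^{-1}(y)=F(y+j)$. Since $g(x,y)(v)=f(y)(v)$ does not depend on $x$ and $(\phi_i^{-1})_*=\mathrm{Id}$, a direct expansion of Definition \ref{Def:LipConnMan} collapses the local representation to
\[
g_{(\phi_i,\psi_j)}(x,y)=G(y+j),\qquad G(z):=(F^{-1})'(F(z))\circ f(F(z)),
\]
independent of $x$ and $i$. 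The task reduces to bounding the classical Lip-$\gamma$ norm of $y\mapsto G(y+j)$, uniformly in $j$, on the relevant piece of $B_p(0,1)$.

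Writing $G(z)=B(H_1(z),H_2(z))$ with $H_1(z):=(F^{-1})'(F(z))$, $H_2(z):=f(F(z))$, and $B$ the composition bilinear map, I would invoke Lemma \ref{lemma:GlobalLipDiff}: $F$ is Lip-$\gamma$ globally, but $F^{-1}$ is Lip-$\gamma$ only on each $B_p(0,\delta')$ with $\delta'<1$. Theorem \ref{thm:CompoLip} then gives that $H_2=f\circ F$ is Lip-$\gamma$ on all of $\mathbb{R}^p$, while $H_1$ is Lip-$\gamma$ only on the bounded sets $F^{-1}(B_p(0,\delta'))$. Applying Proposition \ref{Compobifunc} on each such bounded set shows that $G$ is Lip-$\gamma$ there, although it may fail to be so globally.

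For Part 1, I would fix $\delta\in(0,1)$ and choose $\delta'\in(\delta,1)$. The pre-compactness of $B_p(0,\delta)$ in $B_p(0,1)$, combined with the local finiteness of the cover $(V_j)_{j\in J}$, leaves only finitely many indices $j$ with $V_j\cap B_p(0,\delta)\neq\emptyset$. For each such $j$, the restricted chart image $\psi_j(V_j\cap B_p(0,\delta))$ sits inside the translate $F^{-1}(B_p(0,\delta))-j$, and on this set $y\mapsto G(y+j)$ inherits the Lip-$\gamma$ norm of $G|_{F^{-1}(B_p(0,\delta'))}$ (up to a constant from the translation). The maximum of these finitely many bounds delivers the uniform estimate.

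Part 2 is where I expect the main subtlety: restricting to $B_p(0,\delta)$ is no longer allowed, there are infinitely many relevant charts, and $H_1(z)$ explodes as $z\to\infty$. What saves the argument is that the compact support of $f$ forces $H_2$ to vanish outside a bounded set, and the idea is to cut $H_1$ off before it blows up. I would fix $\delta''<1$ with $\mathrm{supp}(f)\subset B_p(0,\delta'')$ and a smooth bump $\chi$ with $\chi\equiv 1$ on $B_p(0,\delta'')$ and $\mathrm{supp}(\chi)\subset B_p(0,\delta''')$ for some $\delta''<\delta'''<1$. Then $\widetilde{H}_1(z):=\chi(F(z))(F^{-1})'(F(z))$ is smooth and compactly supported on $\mathbb{R}^p$, hence Lip-$\gamma$. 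Since $\widetilde{H}_1$ and $H_1$ coincide wherever $H_2\neq 0$, the pointwise compositions $B(\widetilde{H}_1,H_2)$ and $G$ agree everywhere on $\mathbb{R}^p$, and Proposition \ref{Compobifunc} upgrades $G$ to a globally Lip-$\gamma$ map. The translates $y\mapsto G(y+j)$ then share a uniform Lip-$\gamma$ norm, yielding the result. The delicate point is precisely this cutoff construction: it makes transparent how the compact-support hypothesis is exactly what is needed to absorb the singular behaviour of $F^{-1}$ at the boundary of $B_p(0,1)$.
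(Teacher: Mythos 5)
Your proposal is correct and follows essentially the same route as the paper's proof: the same $F$-induced atlas, the same factorisation of the local representation into the chart-derivative factor $(F^{-1})'\circ F$ and the factor $f\circ F$, Proposition \ref{Compobifunc} to combine them, and the same finitely-many-charts argument for the restriction to $B_p(0,\delta)$. The only real variation is in part 2, where the paper simply notes that the local representation is identically zero on charts not meeting $\mathrm{supp}(f)$ (so no cutoff is needed), whereas you confine the singular factor by multiplying it with a bump $\chi\circ F$; your reduction of all local representations to translates $G(\cdot+j)$ of a single function on $\mathbb{R}^p$ is a clean streamlining of what the paper does chart by chart.
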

	\begin{proof}
	Consider the Lip-$(\gamma+1)$ atlases ${(U_x, \phi_x)}_{x\in I}$ and ${(\widetilde{U}_z, \varphi_z)}_{z\in J}$ on $\mathbb{R}^{n}$ and $\mathbb{R}^{p}$ respectively (as in Proposition \ref{Prop:EuclidLipManifoldEuclid}) and the Lipschitz bijection $F:\mathbb{R}^p \to B_p(0,1)$ of Lemma \ref{lemma:GlobalLipDiff}. For $z\in J$, we define $y_z=F(z)$, $V_{y_z}=F(\widetilde{U}_z)$ and $\psi_{y_z}= \varphi_z\circ F^{-1}$. Let $\delta \in (0,1)$. As the collection $\{B_{p}(z,2)\}_{z\in J}$ is locally finite and that $F^{-1}\left(B_{p}(0,\delta)\right)$ is precompact, then there exists a finite subset $J_\delta\subset J$ such that if $z\notin J_\delta$ then $B_{p}(z,2) \cap F^{-1}\left(B_{p}(0,\delta)\right)=\varnothing$ and consequently $V_{y_z} \cap B_{p}(0,\delta) = \varnothing$. Therefore, we restrict ourselves to the case $z\in J_\delta$. Let us finally note that as $\cup_{z\in J_\delta}F(\widetilde{U}_z)$ is precompact, then there exists $M_\delta \in (0,1)$ such that $\cup_{z\in J_\delta}F(\widetilde{U}_z) \subset B(0,M_\delta)$. Let $x\in I$ and $z\in J_\delta$. By Definition \ref{Def:LipConnMan}, we need to study the regularity of the map
	\[\begin{array}{rccl}
	g_{x,{y_z}}: & B_{n}(0,1) \times \psi_{y_z}\left( B_{p}(0,\delta)\cap V_{y_z} \right)&\longrightarrow&\mathcal{L}(\mathbb{R}^{n}, \mathbb{R}^{p}) \\
	&(u,v) &\longmapsto &	(\psi_{y_z})_*(\psi_{y_z}^{-1}(v))\circ f(\psi_{y_z}^{-1}(v))\circ (\phi_x^{-1})_*(u) \\
	\end{array}
	\]
	In anticipation of the second part of the proof, we will study instead the regularity of the (unrestricted) map
	\[g_{x,{y_z}}:  (u,v) \in B_{n}(0,1) \times B_{p}(0,1) \longmapsto 
	(\psi_{y_z})_*(\psi_{y_z}^{-1}(v))\circ f(\psi_{y_z}^{-1}(v))\circ (\phi_x^{-1})_*(u) 
	\in \mathcal{L}(\mathbb{R}^{n}, \mathbb{R}^{p}) .\]
	By Lemma \ref{lemma:LipMapBallManifold}, $f\circ \psi_{y_z}^{-1}$ is Lip-$\gamma$ with a constant that can be bounded from above independently of ${y_z}$. Therefore, by Proposition \ref{Compobifunc}, the map 
	\[(u,v) \in B_{n}(0,1) \times B_{p}(0,1) \longmapsto 
	f(\psi_{y_z}^{-1}(v))\circ (\phi_x^{-1})_*(u) 
	\in \mathcal{L}(\mathbb{R}^{n}, \mathbb{R}^{p})\]
	is Lip-$\gamma$ with a norm that is independent of both $x$ and ${y_z}$. By definition of the chart map $\psi_{y_z}$, one has
	\[\forall v \in B_{p}(0,1): \quad 
	(\psi_{y_z})_*(\psi_{y_z}^{-1}(v)) = \mathrm{d}(\varphi_z\circ F^{-1})\circ (F\circ \varphi_z^{-1}) (v).\]
	Since $F\circ \varphi_z^{-1}: B_{p}(0,1) \to B_{p}(0,M_\delta)$ is Lip-$\gamma$ and $\varphi_z\circ F^{-1}:  B_{p}(0,M_\delta) \to \mathbb{R}^p$ is Lip-$(\gamma+1)$ (with Lipschitz constant depending on $\delta$) then $v\mapsto (\psi_{y_z})_*(\psi_{y_z}^{-1}(v))$ is Lip-$\gamma$. Using again Proposition \ref{Compobifunc}, this implies that $g_{x,{y_z}}$ is indeed Lip-$\gamma$ (with Lipschitz norm bounded form above independently of $x$ and ${y_z}$). This concludes the first part of the proof.\\
	Assume now that $f$ is compactly supported. Let $\delta \in (0,1)$ such that $f$ is supported in $B_{p}(0,\delta)$. Let $x\in I$. By the above, $g_{x,y_z}$ is Lip-$\gamma$ for all $z\in J_\delta$ with a Lipschitz norm bounded from above by a constant that only depends on $\delta$. For $z\notin J_\delta$, $g_{x,y_z}$ is the null connection, therefore also Lip-$\gamma$. Hence $g$ is Lip-$\gamma$.
	\end{proof}
	Next, we show that the Lip-$\gamma'$ regularity of connections is stable by Lip-$\gamma$ diffeomorphisms for $\gamma'\leq \gamma -1$. To reduce the assumptions and make the construction more intuitive, we will restrict ourselves to the following specific situation that is relevant to our main result.
	\begin{Cor}\label{Cor:VFasLipConnMan2}
	Let $n$ and $d$ be two positive integers and $\gamma>1$. Let $E$ be a normed vector space and $V$ be a subset of $E$. Let $F: B_p(0,1)\to V$ be a Lip-$\gamma$ diffeomorphism and endow $V$ with the Lip-$\gamma$ structure induced by $F$. Let $f:V\to \mathcal{L}(\mathbb{R}^{n}, E)$ be a compactly supported Lip-$\gamma$ map such that for all $v\in \mathbb{R}^{n}$, $f(.)(v) \in \tau (V)$. Then the transport $g$ from $\mathbb{R}^{n}$ to $V$ induced by $f$ is Lip-$(\gamma-1)$.
	\end{Cor}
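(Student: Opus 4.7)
The plan is to verify directly, for each pair of charts, that the chart representations $g_{(\varphi_y,\psi_x)}$ of $g$ (in the sense of Definition \ref{Def:LipConnMan}) are classically Lip-$(\gamma-1)$ with constants bounded uniformly across the atlas. By construction of the $F$-induced atlas on $V$ (via Proposition \ref{Prop:TransportLipStructure}), every chart map on $V$ takes the form $\psi_x=\phi_x\circ F^{-1}$ where $(\phi_x)_{x\in I}$ are the chart maps of the Lip-$\gamma$ atlas on $B_p(0,1)$ provided by Corollary \ref{UnitBallLip}; on $\mathbb{R}^n$ I use the standard Lip-$\gamma$ atlas $(\varphi_y)_{y\in J}$ from Proposition \ref{Prop:EuclidLipManifoldEuclid}.

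The chain-rule identity $(\phi_x\circ F^{-1})_*=(\phi_x)_*\circ (F^{-1})_*$, applied to the definition of $g_{(\varphi_y,\psi_x)}$, rewrites it as
\[g_{(\varphi_y,\psi_x)}(a,b)(v)=(\phi_x)_*(\phi_x^{-1}(b))\circ (F^{-1})_*(F(\phi_x^{-1}(b)))\circ f(F(\phi_x^{-1}(b)))\circ (\varphi_y^{-1})_*(a)(v),\]
displaying $g_{(\varphi_y,\psi_x)}$ as a composition of four factors, each a function of $(a,b)\in B_n(0,1)\times B_p(0,1)$ with values in a suitable space of linear maps. I would then control the Lip-$(\gamma-1)$ regularity of each factor separately and assemble the bounds via the bilinear composition Proposition \ref{Compobifunc}. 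The derivative-of-chart factors $(\phi_x)_*\circ \phi_x^{-1}$ and $(\varphi_y^{-1})_*$ are Lip-$(\gamma-1)$ with constants independent of $x,y$, because the chart maps and their inverses are classically Lip-$\gamma$ with uniform bounds (using Lemma \ref{lemma:InvChartLip} and the atlas axioms of Definition \ref{Def:LipManCLL2}). The factor $(F^{-1})_*\circ F\circ \phi_x^{-1}$ is Lip-$(\gamma-1)$ by the composition Theorem \ref{thm:CompoLip}, since $F\circ\phi_x^{-1}$ is Lip-$\gamma$ as the inverse of a chart in the $F$-induced atlas, while $(F^{-1})_*$ is Lip-$(\gamma-1)$ as the derivative of the classically Lip-$\gamma$ map $F^{-1}$ (cf.\ Lemma \ref{AlmostLipDiffMaps}). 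Finally, the factor $f\circ F\circ \phi_x^{-1}=f\circ \psi_x^{-1}$ is classically Lip-$\gamma$ on $B_p(0,1)$ by the very definition of Lip-$\gamma$ regularity of $f$ on $V$ in the $F$-induced structure, and hence Lip-$(\gamma-1)$ by the embedding Theorem \ref{EmbedLip}.

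Uniformity of the constants across chart pairs is obtained from the atlas axioms together with the compact support of $f$: since the atlas on $V$ is locally finite and $\mathrm{supp}(f)$ is compact, $g_{(\varphi_y,\psi_x)}$ vanishes identically for all but finitely many $x\in I$, and the remaining finitely many contributions are uniformly bounded. The main obstacle I anticipate is the bookkeeping of the second step: one must carefully track the domains and targets of each Lipschitz composition, verify that the manifold-sense Lip-$\gamma$ hypothesis on $f$ translates precisely — through the charts $\psi_x=\phi_x\circ F^{-1}$ — into classical Lip-$\gamma$ regularity of $f\circ\psi_x^{-1}$ on the unit ball, and confirm that the resulting Lipschitz constants combine to a bound that does not depend on $x$ or $y$.
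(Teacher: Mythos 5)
Your decomposition of the chart representation is correct and the plan does go through, but be aware that it essentially reproduces, inline, the paper's own estimates: the paper's proof is a short reduction that pulls $f$ back to the ball by setting $\widehat f(z)=(F^{-1})_*(F(z))\,(f\circ F)(z)$, checks that $\widehat f$ is a compactly supported Lip-$(\gamma-1)$ map on $B_p(0,1)$, observes that the chart representations of $g$ coincide with those of the transport $\widehat g$ induced by $\widehat f$, and then quotes part (2) of Lemma \ref{lemma:VFasLipConnMan} at regularity $\gamma-1$; your factor-by-factor argument is that lemma's proof carried out by hand, so the two routes buy the same thing, with the paper's version merely avoiding a repetition of the chart bookkeeping.

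The one point you must repair is your justification of uniformity. It is not true that the factor $b\mapsto(\phi_x)_*(\phi_x^{-1}(b))$ is Lip-$(\gamma-1)$ with a constant independent of the ball-side chart $x$: the atlas axioms of Definition \ref{Def:LipManCLL2} control only the transition maps, and Lemma \ref{lemma:InvChartLip} controls only the \emph{inverse} chart maps $\psi_x^{-1}=F\circ\phi_x^{-1}$. The forward chart maps of the ball structure are compositions of Euclidean charts with $G^{-1}$, where $G:\mathbb{R}^p\to B_p(0,1)$ is the map of Lemma \ref{lemma:GlobalLipDiff}, and their pushforwards blow up as the chart domain approaches the boundary of the ball; this is exactly why part (1) of Lemma \ref{lemma:VFasLipConnMan} only yields regularity on $B_p(0,\delta)$ with $\delta<1$ and why compact support is required in part (2) (see also the paper's remark that Lipschitz vector fields need not induce Lipschitz transports on the entire unit ball). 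Consequently, your closing observation --- that compact support of $f$ together with local finiteness of the atlas makes $g_{(\varphi_y,\psi_x)}$ vanish for all but finitely many $V$-side charts, each remaining factor being Lipschitz because its chart domain is precompact in the ball --- is not an afterthought but the actual source of uniformity in $x$, and it should replace the appeal to the atlas axioms; uniformity in the $\mathbb{R}^n$-side index is unproblematic, since those charts are translations and $(\varphi_y^{-1})_*$ is the identity on the unit ball. With that correction the argument is sound.
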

	\begin{proof}
	Consider the usual Lip-$(\gamma+1)$ atlas ${(U_x, \phi_x)}_{x\in I}$ and ${(\widetilde{U}_z, \varphi_z)}_{z\in J}$ on $\mathbb{R}^{n}$ and $B_p(0,1)$ respectively. For $z\in J$, we define $y_z=F(z)$, $V_{y_z}=F(\widetilde{U}_z)$ and $\psi_{y_z}= \varphi_z\circ F^{-1}$. Let $x\in I$ and $z\in J$. By Definition \ref{Def:LipConnMan}, we need to study the Lipschitz regularity of the map
	\[\begin{array}{rccl}
	g_{x,{y_z}}: & B_{n}(0,1) \times B_{p}(0,1)&\longrightarrow&\mathcal{L}(\mathbb{R}^{n}, \mathbb{R}^{p}) \\
	&(u,v) &\longmapsto &	(\psi_{y_z})_*(\psi_{y_z}^{-1}(v))\circ f(\psi_{y_z}^{-1}(v))\circ (\phi_x^{-1})_*(u) \\
	\end{array}
	\]
	Define the following compactly supported Lip-$(\gamma-1)$ on $B_p(0,1)$
	\[\widehat{f}:  z \in B_{p}(0,1) \longmapsto 
	\left( v\mapsto 
	(F^{-1})_*(F(z))(f\circ F)(z)(v)
	\right) \in \mathcal{L}(\mathbb{R}^{n}, \mathbb{R}^{p}),\]
	and denote by $\widehat{g}$ its associated transport from $\mathbb{R}^{n}$ to $B_p(0,1)$. Note then that $g_{x,{y_z}}=	\widehat{g}_{x,z}$. By Lemma \ref{lemma:VFasLipConnMan}, we conclude that $g$ is Lip-$(\gamma-1)$.
	\end{proof}
	\section{Local Lipschitz parametrisation of orbits of Lipschitz vector fields}\label{sec:LipOrbits}
	\subsection{Orbits and distributions}
	In this subsection, we will introduce the notion of orbits and their ``candidate'' tangent spaces. We will in most cases follow the terminology used in \cite{Sussmann} and will restrict the exposition to the minimum necessary to obtain our results. 
	\begin{Def}  Let $d\in\mathbb{N}^*$. Let $\mathcal{D}$ be a family of Lip-$1$ vector fields on $\mathbb{R}^d$. Let $n\in\mathbb{N}^*$. Let $f^1,\ldots,f^n\in\mathcal{D}$ and $t_1,\ldots,t_n\in \mathbb{R}$. The $\mathbb{R}^d$-valued map on $\mathbb{R}^d$ defined by $\tilde{f}^n_{t_n}\circ\cdots\circ \tilde{f}^1_{t_1}$ is called a $\mathcal{D}$-diffeomorphism.
	\end{Def}
	Orbits of vector fields will be a central notion in the rest of this paper.
	\begin{Def} Let $d\in\mathbb{N}^*$. Let $\mathcal{D}$ be a family of Lip-$1$ vector fields on $\mathbb{R}^d$ and $y \in \mathbb{R}^d$. We call $\mathcal{D}$-orbit at $y$ and denote by $\mathbb{L}_{y}(\mathcal{D})$ the set of images of $y$ by all $\mathcal{D}$-diffeomorphisms.
	\end{Def}
	In the sequel, we will use the following notations.
	\begin{Notation} Let $d\in\mathbb{N}^*$ and $\mathcal{D}$ be a family of Lip-$1$ vector fields on $\mathbb{R}^d$. Following similar notational conventions as before, for $n\in \mathbb{N}^*$ and $\xi=(f^1,\ldots,f^n)\in\mathcal{D}^n$,
	\begin{itemize}
		\item we denote by $\tilde{\xi}$ the map
	\[\tilde{\xi}:\mathbb{R}^n\times \mathbb{R}^d \longrightarrow \mathbb{R}^d \quad,\quad (t_1,\ldots,t_n,x)\longmapsto \tilde{f}^n_{t_n}\circ\cdots\circ \tilde{f}^1_{t_1}(x),\]
		\item for $y\in \mathbb{R}^d$, $\tilde{\xi}_y$ denotes the map $(t_1,\ldots,t_n)\mapsto \tilde{\xi}(t_1,\ldots,t_n,y)$,
		\item for $s=(s_1,\ldots,s_n)$ we denote by $\tilde{\xi}_{s}$ the $\mathcal{D}$-diffeomorphism $x\mapsto \tilde{\xi}(s_1,\ldots,s_n,x)$.
	\end{itemize}	 
	 More generally, for $p\in\mathbb{N}^*$, $n_1,\ldots,n_p\in\mathbb{N}^*$, $(\xi^1,\ldots,\xi^p)\in\mathcal{D}^{n_1}\times\cdots\times\mathcal{D}^{n_p}$ and, if $\eta$ denotes the ordered collection $(\xi^1,\ldots,\xi^p)$, then $\tilde\eta$ is the map given by the identity
	\[\forall (t_1,\ldots,t_p)\in\mathbb{R}^{n_1}\times\cdots\times\mathbb{R}^{n_p}, \forall x\in \mathbb{R}^d: \quad \tilde{\eta}(t_1,\ldots,t_p,x)=\tilde{\xi}^p_{t_p}\circ\cdots\circ\tilde{\xi}^1_{{t_1}}(x).\]
	For $y\in \mathbb{R}^d$, $\tilde{\eta}_y$ is the map $\tilde{\eta}(.,y)$ and for $s=({s_1},\ldots,{s_p})$, $\tilde{\eta}_{{s}}$ is the map $\tilde{\xi}^p_{{s_p}}\circ\cdots\circ\tilde{\xi}^1_{{s_1}}$. 
	\end{Notation}
	A key idea in Sussmann's work \cite{Sussmann} was to identify the ``candidate'' tangent space to orbits. This is best carried through the notion of distributions.
	\begin{Def} Let $M$ be a $\mathcal{C}^1$-manifold. A distribution $\Delta$ on $M$ is a mapping that associates to every point $p\in M$ a linear subspace of $T_{p}M$ which is denoted $\Delta(p)$.
	\end{Def}
	\begin{Def} Let $d\in\mathbb{N}^*$ and $\gamma\geq 1$. Let $\mathcal{D}$ be a family of Lip-$\gamma$ vector fields on $\mathbb{R}^d$. Let $\Delta$ and $\Gamma$ be two distributions on $\mathbb{R}^d$.
	\begin{itemize}
		\item We say that $\mathcal{D}$ spans $\Delta$ if, for every $p\in \mathbb{R}^d$, one has
		\[\Delta(p)=\textrm{span}\{f(p),\;f\in\mathcal{D}\}.\]
		\item If $\gamma>1$, we say that $\Delta$ is $\mathcal{D}$-invariant if, for every $\mathcal{D}$-diffeomorphism $g$ and $y\in \mathbb{R}^d$, $g_*(y)$ induces a diffeomorphism from $\Delta(y)$ onto $\Delta(g(y))$.
		\item We say that $\Delta$ contains $\Gamma$ and write $\Gamma\subseteq\Delta$ if, for every $y\in \mathbb{R}^d$, $\Gamma(y)\subseteq\Delta(y)$.
	\end{itemize}
	\end{Def}
	One is then able to identify the candidate tangent space to a $\mathcal{D}$-orbit as the collection of tangent vectors spanned by the vector fields of $\mathcal{D}$ and the transport of these via $\mathcal{D}$-diffeomorphisms.
	\begin{theo}{\cite{Sussmann}} Let $d\in\mathbb{N}^*$ and $\gamma\geq 1$. Let $\mathcal{D}$ be a family of Lip-$\gamma$ vector fields on $\mathbb{R}^d$.
	\begin{enumerate}
		\item There exists a unique natural distribution spanned by $\mathcal{D}$. This will be denoted by $\mathcal{L}(\mathcal{D})$.
		\item Assume that $\gamma>1$. Then there exists a unique smallest distribution $P_{\mathcal{D}}$ containing  $\mathcal{L}(\mathcal{D})$ that is $\mathcal{D}$-invariant. More explicitly, for every $y\in \mathbb{R}^d$, $P_{\mathcal{D}}(y)$ is given by
	\[P_{\mathcal{D}}(y):=
	\left\{
	g_*(v) \; : \; 
	g \;\; \mathcal{D}-\textrm{diffeomorphism} , v \in \mathcal{L}(\mathcal{D})(g^{-1}(y))
	\right\}.\]
	\end{enumerate}
	\end{theo}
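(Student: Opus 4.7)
The plan is to dispose of (1) as essentially definitional and then treat (2) in two stages: establish the abstract existence and uniqueness of a smallest $\mathcal{D}$-invariant distribution containing $\mathcal{L}(\mathcal{D})$, and then identify this object with the explicit orbit-wise formula. For (1), the requirement that a distribution $\Delta$ spanned by $\mathcal{D}$ satisfy $\Delta(p)=\mathrm{span}\{f(p) : f\in\mathcal{D}\}$ is a pointwise equality rather than a mere inclusion, so the only admissible choice is $\mathcal{L}(\mathcal{D})(p):=\mathrm{span}\{f(p) : f\in\mathcal{D}\}$, and existence and uniqueness are immediate. For the abstract part of (2), the family of $\mathcal{D}$-invariant distributions containing $\mathcal{L}(\mathcal{D})$ is nonempty (the full tangent bundle belongs to it) and, crucially, closed under fibrewise intersection: $\mathcal{D}$-invariance makes each $g_*(y)$ a linear \emph{isomorphism} between fibres, and since $g$ is a global diffeomorphism of $\mathbb{R}^d$, the map $g_*(y)$ is injective on the whole tangent space, so restricting to the intersection preserves both injectivity and surjectivity of the pushforward. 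Taking the intersection of all such distributions therefore produces a unique smallest element $P_{\mathcal{D}}$.

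Let $Q(y)$ denote the right-hand side of the explicit formula. The inclusion $Q(y)\subseteq P_{\mathcal{D}}(y)$ is immediate: for any $v\in\mathcal{L}(\mathcal{D})(g^{-1}(y))\subseteq P_{\mathcal{D}}(g^{-1}(y))$, $\mathcal{D}$-invariance of $P_{\mathcal{D}}$ forces $g_*(v)\in P_{\mathcal{D}}(y)$. For the converse, I would verify that $Q$ is itself a $\mathcal{D}$-invariant distribution containing $\mathcal{L}(\mathcal{D})$, so that minimality gives $P_{\mathcal{D}}(y)\subseteq Q(y)$. Containment of $\mathcal{L}(\mathcal{D})$ follows by taking $g$ to be the identity $\mathcal{D}$-diffeomorphism (e.g.\ $\tilde{f}^{1}_{0}$), and $\mathcal{D}$-invariance is a consequence of the closure of the class of $\mathcal{D}$-diffeomorphisms under composition and inversion: if $h$ is a $\mathcal{D}$-diffeomorphism and $g_*(v)\in Q(y)$, then $h_*(g_*(v))=(h\circ g)_*(v)$ with $(h\circ g)^{-1}(h(y))=g^{-1}(y)$, yielding $h_*(g_*(v))\in Q(h(y))$, and surjectivity of $h_*\colon Q(y)\to Q(h(y))$ follows by the analogous argument with $h^{-1}\circ g'$ replacing $g$, for $g'$ representing an arbitrary element of $Q(h(y))$.

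The main obstacle I expect is verifying that $Q(y)$ is genuinely a linear subspace of $T_y\mathbb{R}^d$, since the formula presents it as a union of the individually linear images $g_*(\mathcal{L}(\mathcal{D})(g^{-1}(y)))$ indexed by the non-abelian group of $\mathcal{D}$-diffeomorphisms. Closure under scalar multiplication is immediate from the linearity of each $g_*$, but closure under addition requires writing a sum $(g_1)_*(u_1)+(g_2)_*(u_2)$ as a single pushforward $g_*(u)$ with $u\in\mathcal{L}(\mathcal{D})(g^{-1}(y))$. Expanding $u_i=\sum_k a^i_k f^i_k(g_i^{-1}(y))$ with $f^i_k\in\mathcal{D}$, each summand $(g_i)_*(f^i_k(g_i^{-1}(y)))$ is the value at $y$ of the pushed-forward vector field $(g_i)_*f^i_k$, whose integral curves are conjugates by $g_i$ of integral curves of $f^i_k$; combining these conjugated flows via a concatenation in the spirit of Sussmann's orbit construction, and using the Lip-$\gamma$ ($\gamma>1$) hypothesis to guarantee the differentiability required to make sense of the resulting pushforwards, should package the sum into a single expression $g_*(u)$ of the desired form and thereby close the argument.
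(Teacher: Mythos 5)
Most of your outline is sound: part (1) is indeed definitional, the intersection argument for the existence and uniqueness of a smallest $\mathcal{D}$-invariant distribution containing $\mathcal{L}(\mathcal{D})$ works (the fibrewise-injectivity observation is exactly what is needed), the inclusion $Q(y)\subseteq P_{\mathcal{D}}(y)$ is correct, and so are the containment and invariance arguments for $Q$, \emph{granted} that each $Q(y)$ is a linear subspace. The genuine gap is precisely the step you flagged and postponed, and it cannot be repaired by any concatenation trick, because the claim is false: $Q(y)$ is in general not closed under addition. Take $d=3$ and $\mathcal{D}=\{f^1,f^2\}$ with $f^1=\partial_x$ and $f^2=\partial_y+\sin(x)\,\partial_z$; both are Lip-$\gamma$ for every $\gamma$ in the sense of the paper (bounded with bounded derivatives of all orders). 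The flow of $f^1$ has derivative the identity everywhere, and the flow $\tilde{f}^2_t$ has derivative the shear $e_1\mapsto e_1+t\cos(x)\,e_3$, $e_2\mapsto e_2$, $e_3\mapsto e_3$; since such shears form a commutative group, \emph{every} $\mathcal{D}$-diffeomorphism $g$ satisfies $g_*(p)(e_2)=e_2$, $g_*(p)(e_3)=e_3$ and $g_*(p)(e_1)\in e_1+\mathbb{R}e_3$ at every point $p$. Writing $v=\alpha e_1+\beta\bigl(e_2+\sin(q_1)e_3\bigr)\in\mathcal{L}(\mathcal{D})(q)$ with $q=g^{-1}(0)$, one gets $g_*(q)(v)=\alpha e_1+\beta e_2+c\,e_3$ with $c=0$ whenever $\alpha=\beta=0$; hence no nonzero multiple of $e_3$ lies in $Q(0)$. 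On the other hand $e_2=f^2(0)\in Q(0)$ and $e_3-e_2=-f^2(-\pi/2,0,0)$, pushed forward by $\tilde{f}^1_{\pi/2}$ (whose derivative is the identity and which maps $(-\pi/2,0,0)$ to $0$), also lies in $Q(0)$. Their sum $e_3$ does not, so $Q(0)$ is a union of planes rather than a subspace, and in fact $Q(0)\subsetneq P_{\mathcal{D}}(0)=\mathbb{R}^3$. So the sum $(g_1)_*(u_1)+(g_2)_*(u_2)$ cannot in general be rewritten as a single $g_*(u)$, no matter how the flows are concatenated.

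The resolution is that the displayed formula must be read with a linear span, which is how Sussmann states it ($P_{\mathcal{D}}(y)$ is \emph{spanned} by the vectors $g_*(v)$); as printed, without the span, the identity is literally false by the example above, so your instinct that this was the crux was correct. With $\Delta(y):=\mathrm{span}\,Q(y)$ your own argument closes immediately: $\Delta\supseteq\mathcal{L}(\mathcal{D})$ via $g=\mathrm{id}$; $h_*(y)\bigl(\Delta(y)\bigr)=\mathrm{span}\,h_*(y)\bigl(Q(y)\bigr)\subseteq\Delta(h(y))$ by linearity of $h_*(y)$ together with closure of $\mathcal{D}$-diffeomorphisms under composition, and the reverse inclusion follows by applying the same argument to $h^{-1}$, giving $\mathcal{D}$-invariance; and any $\mathcal{D}$-invariant distribution containing $\mathcal{L}(\mathcal{D})$ contains every generator $g_*(v)$, hence contains $\Delta$, which gives minimality (and makes the separate intersection construction optional, since $\Delta$ itself witnesses existence). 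Note also that the paper offers no proof to compare against here: the theorem is quoted from Sussmann, and this correction is consistent with the way the distribution $P_{\mathcal{D}}$ is actually used later (Theorem \ref{theo:GeneratePD} produces $P_{\mathcal{D}}(y)$ as the image of a single differential, not as the union defining $Q(y)$).
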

	As a direct consequence of the definition of invariance, one has the following.
	\begin{Cor}{\cite{Sussmann}} Let $d\in\mathbb{N}^*$ and $\gamma> 1$. Let $\mathcal{D}$ be a family of Lip-$\gamma$ vector fields on $\mathbb{R}^d$. Then the dimension of $P_{\mathcal{D}}$ is constant on $\mathcal{D}$-orbits, i.e. if $x$ and $y$ belong to the same $\mathcal{D}$-orbit then $\mathrm{dim}\left( P_{\mathcal{D}}(x)\right)=\mathrm{dim}\left( P_{\mathcal{D}}(y)\right)$.
	\end{Cor}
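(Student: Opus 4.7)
The approach is a direct unpacking of the $\mathcal{D}$-invariance of $P_{\mathcal{D}}$ established in the preceding theorem: since being on the same orbit means, by definition, that there is a $\mathcal{D}$-diffeomorphism mapping one point to the other, and such a diffeomorphism transports $P_{\mathcal{D}}$ isomorphically between its fibres, the dimension of $P_{\mathcal{D}}$ is forced to be constant along orbits.

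The plan is as follows. First I unpack the orbit hypothesis: assuming $y\in\mathbb{L}_x(\mathcal{D})$, the definition of the $\mathcal{D}$-orbit supplies $n\in\mathbb{N}^*$, vector fields $f^1,\ldots,f^n\in\mathcal{D}$ and times $t_1,\ldots,t_n\in\mathbb{R}$ such that the $\mathcal{D}$-diffeomorphism $g:=\tilde{f}^n_{t_n}\circ\cdots\circ\tilde{f}^1_{t_1}$ satisfies $g(x)=y$. (One should also observe, in passing, that orbits form an equivalence relation: $\mathrm{Id}$ is a $\mathcal{D}$-diffeomorphism, the inverse of $g$ above is $\tilde{f}^1_{-t_1}\circ\cdots\circ\tilde{f}^n_{-t_n}$, again a $\mathcal{D}$-diffeomorphism, and concatenation of such diffeomorphisms is of the same form; so the choice of which endpoint is labelled $x$ and which is labelled $y$ is immaterial.)

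Second, I invoke the $\mathcal{D}$-invariance of $P_{\mathcal{D}}$ applied to the $\mathcal{D}$-diffeomorphism $g$ at the point $x$: this asserts that the pushforward $g_*(x)$ restricts to a (linear) isomorphism from $P_{\mathcal{D}}(x)$ onto $P_{\mathcal{D}}(g(x))=P_{\mathcal{D}}(y)$. Since finite-dimensional vector spaces that are linearly isomorphic have equal dimension, I conclude that $\dim P_{\mathcal{D}}(x)=\dim P_{\mathcal{D}}(y)$.

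There is essentially no obstacle: both ingredients, namely the existence of the connecting $\mathcal{D}$-diffeomorphism and the invariance of $P_{\mathcal{D}}$, are supplied by the preceding definitions and theorem. The only minor point to clarify is the interpretation of the word ``diffeomorphism'' in the $\mathcal{D}$-invariance clause, which in the present context, where $\Delta(y)$ and $\Delta(g(y))$ are linear subspaces of tangent spaces and $g_*(y)$ is a linear map between them, should be read as a \emph{linear isomorphism} -- which is exactly the conclusion needed to equate the dimensions.
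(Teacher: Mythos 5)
Your argument is correct and coincides with the paper's: the corollary is stated there as a direct consequence of the $\mathcal{D}$-invariance of $P_{\mathcal{D}}$ (citing Sussmann), and your proof simply spells out that consequence, namely that the connecting $\mathcal{D}$-diffeomorphism pushes $P_{\mathcal{D}}(x)$ isomorphically onto $P_{\mathcal{D}}(y)$, so the dimensions agree. Your remarks on orbits forming an equivalence relation and on reading ``diffeomorphism'' between the fibres as a linear isomorphism are sensible clarifications and do not change the substance.
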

	While $P_{\mathcal{D}}$ is, by construction, rich enough to contain all the tangent vectors generated by integral curves of the vector fields in $\mathcal{D}$ and their compositions, it can still be precisely generated by these. This is the content of the theorem below, which is the summary of the three main lemmas in \cite{Sussmann}. The only difference with the original statements is the Lipschitz regularity condition instead of the $\mathcal{C}^\infty$ assumption, which does not impact the validity of the original proofs.
	\begin{theo}{\cite{Sussmann}}\label{theo:GeneratePD} Let $d\in\mathbb{N}^*$ and $\gamma> 1$. Let $\mathcal{D}$ be a family of Lip-$\gamma$ vector fields on $\mathbb{R}^d$. 
	\begin{enumerate}
		\item For $n\in\mathbb{N}^*$, $\xi\in\mathcal{D}^n$, $x,y\in \mathbb{R}^d$ and $T\in \mathbb{R}^n$ such that $\tilde{\xi}(T, x)=y$, one has
		\[(\tilde{\xi}_{x})_*(T)(\mathbb{R}^n)\subseteq P_{\mathcal{D}}(y).\]
		\item Conversely, for all $y\in \mathbb{R}^d$, there exist $x\in \mathbb{R}^d$, $n\in\mathbb{N}^*$, $\xi\in\mathcal{D}^n$ and $T\in \mathbb{R}^n$ such that
	\[\tilde{\xi}(T,x)=y \textrm{ and } (\tilde{\xi}_{x})_*(T)(\mathbb{R}^n)= P_{\mathcal{D}}(y).\]
	\end{enumerate}
	\end{theo}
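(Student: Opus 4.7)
The two assertions have rather different flavors, so I would treat them separately. For part (1), the argument is a direct chain-rule computation combined with the explicit description of $P_{\mathcal{D}}(y)$. Fixing $\xi = (f^1, \ldots, f^n)$, $T = (t_1, \ldots, t_n)$ and setting $z_i := \tilde{f}^i_{t_i} \circ \cdots \circ \tilde{f}^1_{t_1}(x)$ (so $z_0 = x$, $z_n = y$), I would compute
\[\partial_{t_i} \tilde{\xi}_x(T) = \bigl(\tilde{f}^n_{t_n} \circ \cdots \circ \tilde{f}^{i+1}_{t_{i+1}}\bigr)_*(z_i)\, f^i(z_i).\]
The post-composition $g_i := \tilde{f}^n_{t_n}\circ\cdots\circ\tilde{f}^{i+1}_{t_{i+1}}$ is a $\mathcal{D}$-diffeomorphism sending $z_i$ to $y$, while $f^i(z_i) \in \mathcal{L}(\mathcal{D})(z_i) = \mathcal{L}(\mathcal{D})(g_i^{-1}(y))$. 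Hence each partial derivative lies in $P_{\mathcal{D}}(y)$ by the very definition of the latter, and since the image of $(\tilde{\xi}_x)_*(T)$ is spanned by these partials, part (1) follows.

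For part (2), I would proceed by finite induction on the dimension $r$ of the image of the Jacobian, aiming to reach $r = k := \dim P_{\mathcal{D}}(y)$. The base case $r = 0$ is trivial (take $\xi = (f)$, $T = 0$, $x = y$ for any $f \in \mathcal{D}$). The inductive step is the heart of the argument: assuming we have $(\xi, T, x)$ with $\tilde{\xi}(T,x) = y$ and image $W := (\tilde{\xi}_x)_*(T)(\mathbb{R}^n)$ of dimension $r < k$, I construct $(\xi', T')$ with $\tilde{\xi'}(T', x) = y$ whose Jacobian image strictly enlarges $W$. Since $P_{\mathcal{D}}(y)$ is the linear span of vectors of the form $g_*(z)\,f(z)$ with $f \in \mathcal{D}$, $g$ a $\mathcal{D}$-diffeomorphism and $g(z) = y$, and since $W \subsetneq P_{\mathcal{D}}(y)$, at least one such atomic generator $v$ fails to lie in $W$.

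To inject $v$ into the image, I would write the chosen $g$ as $\tilde{h}^m_{s_m} \circ \cdots \circ \tilde{h}^1_{s_1}$, observe that $g^{-1} = \tilde{h}^1_{-s_1} \circ \cdots \circ \tilde{h}^m_{-s_m}$ is again a $\mathcal{D}$-diffeomorphism, and set
\[\xi' = (f^1,\ldots,f^n,h^m,\ldots,h^1,f,h^1,\ldots,h^m), \quad T' = (t_1,\ldots,t_n,-s_m,\ldots,-s_1,0,s_1,\ldots,s_m).\]
A direct check shows $\tilde{\xi'}(T', x) = g \circ \tilde{f}_0 \circ g^{-1}(y) = y$. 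Differentiating at $T'$, each old partial $\partial_{t_i}$ acquires the extra factor $g_*(z) \circ (g^{-1})_*(y) = \mathrm{Id}$ and is therefore preserved, while the partial in the direction of the central inserted parameter computes to $\partial_s|_{s=0} (g \circ \tilde{f}_s \circ g^{-1})(y) = g_*(z)\,f(z) = v$. Consequently, the new image contains $W \oplus \mathrm{span}(v)$, raising the rank by at least one; by part (1) it still lies in $P_{\mathcal{D}}(y)$. Iterating at most $k$ times produces an image of dimension exactly $k$, which then coincides with $P_{\mathcal{D}}(y)$.

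The main technical obstacle is justifying all the chain-rule manipulations rigorously for flows of merely Lip-$\gamma$ (rather than $\mathcal{C}^\infty$) vector fields. This is granted by Theorem \ref{theo:LipRegFlow}: for $\gamma > 1$, the flow $\tilde{f}$ is almost Lip-$\gamma$ on bounded time windows, hence in particular $\mathcal{C}^1$ in both the time and space variables, so the partial derivatives and composition rules used above are legitimate. A secondary but necessary point is the linear-algebraic extraction of an atomic generator $v = g_*(z) f(z)$ outside $W$ whenever $W \subsetneq P_{\mathcal{D}}(y)$; this is immediate once one recalls that $P_{\mathcal{D}}(y)$ is by definition the span of such atomic generators, so any strictly smaller subspace must miss at least one of them.
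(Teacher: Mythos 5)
Your proposal is correct, but be aware that the paper itself contains no proof of Theorem \ref{theo:GeneratePD}: it is imported from Sussmann's work, with only the remark that the $\mathcal{C}^\infty$ hypothesis may be replaced by Lip-$\gamma$, $\gamma>1$, without affecting the original arguments. So the relevant comparison is with Sussmann's classical proof. Your part (1) is exactly that proof: the chain-rule identity $\partial_{t_i}\tilde{\xi}_x(T)=(g_i)_*(z_i)\,f^i(z_i)$ plus the explicit description of $P_{\mathcal{D}}(y)$ as transported generators. Your part (2) takes a mildly different route: Sussmann selects a tuple of maximal rank and shows its image is $\mathcal{D}$-invariant and contains $\mathcal{L}(\mathcal{D})(y)$, hence contains $P_{\mathcal{D}}(y)$ by minimality, whereas you run a rank-increasing induction, inserting the conjugated flow $g\circ\tilde{f}_s\circ g^{-1}$ at the endpoint to capture a missing atomic generator $v=g_*(z)f(z)$. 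Both rest on the same insertion trick; your variant avoids any appeal to minimality but uses instead the explicit formula for $P_{\mathcal{D}}(y)$ (equivalently, that it is the linear span of the atomic generators), which is legitimately available from the theorem quoted just before. Your bookkeeping is sound: since $\tilde{f}_0=\mathrm{Id}$ the old partials survive unchanged, the central partial yields $v$, the extra $s$-partials remain in $P_{\mathcal{D}}(y)$ by part (1), and part (1) caps the rank so the iteration terminates with equality. Finally, your regularity justification is the right one: Theorem \ref{theo:LipRegFlow} gives joint $\mathcal{C}^1$ regularity of the flows for $\gamma>1$, which is all the chain-rule manipulations need, and is precisely the content of the paper's remark that the Lipschitz assumption does not impact Sussmann's proofs.
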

	\subsection{A local Lipschitz structure on orbits}
	We obtain now one of the main results in this paper by endowing neighbourhoods of points in their orbits with Lipschitz structures.
	\begin{theo}\label{LipCharts} Let $d\in\mathbb{N}^*$ and $\gamma>1$. Let $\mathcal{D}$ be a family of Lip-$\gamma$ vector fields on $\mathbb{R}^d$. Let $y\in \mathbb{R}^d$. Then there exist a subset $V_y$ of $\mathbb{R}^d$ containing $y$ and contained in $\mathbb{L}_{y}(\mathcal{D})$ and a Lip-$\gamma$ diffeomorphism $\Psi_y: V_y \to B_p(0,1)$ centred at $y$, where $p=\textrm{dim}( P_{\mathcal{D}}(y))$. Moreover, $V_y$ has a Lip-$\gamma$ structure induced by $\Psi_y^{-1}$ with its tangent space given precisely by the distribution $P_{\mathcal{D}}$.
	\end{theo}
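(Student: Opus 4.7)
The plan is to use Theorem \ref{theo:GeneratePD}(2) to produce a Lipschitz parametrisation of a neighbourhood of $y$ in its orbit whose differential has image $P_{\mathcal{D}}(y)$, and then to straighten this parametrisation via the Lipschitz Constant Rank Theorem \ref{ConstantRankTheo}. First, I would apply Theorem \ref{theo:GeneratePD}(2) to obtain $x\in\mathbb{R}^d$, $n\in\mathbb{N}^*$, $\xi=(f^1,\ldots,f^n)\in\mathcal{D}^n$ and $T\in\mathbb{R}^n$ with $\tilde{\xi}(T,x)=y$ and $(\tilde{\xi}_x)_*(T)(\mathbb{R}^n)=P_{\mathcal{D}}(y)$, so that the rank of $(\tilde{\xi}_x)_*$ at $T$ equals $p$. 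Theorem \ref{theo:LipRegFlow} together with the composition results (Theorem \ref{thm:CompoLip} and Proposition \ref{Compobifunc}) ensures that $\tilde{\xi}_x$ is Lip-$\gamma$ on any bounded open neighbourhood of $T$.

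Next, I would verify the constant-rank hypothesis. By Theorem \ref{theo:GeneratePD}(1), one has $(\tilde{\xi}_x)_*(s)(\mathbb{R}^n)\subseteq P_{\mathcal{D}}(\tilde{\xi}(s,x))$ for every $s$; since each $\tilde{\xi}(s,x)$ lies in the same $\mathcal{D}$-orbit as $y$ and $\dim P_{\mathcal{D}}$ is constant on orbits, this forces the rank of $(\tilde{\xi}_x)_*(s)$ to be at most $p$ throughout a neighbourhood of $T$. Combined with lower semicontinuity of the rank and equality at $T$, this yields an open neighbourhood $U_0$ of $T$ on which $(\tilde{\xi}_x)_*$ has constant rank $p$. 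Theorem \ref{ConstantRankTheo} then produces Lip-$\gamma$ diffeomorphisms $h:U_0\to B_n(0,1)$ and $g:W\to B_d(0,1)$ with $h(T)=0$, $g(y)=0$ and $g\circ\tilde{\xi}_x\circ h^{-1}=\pi_{n,p,d}$ on $B_n(0,1)$.

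I would then define
\[
V_y:=g^{-1}\bigl(B_d(0,1)\cap(\mathbb{R}^p\times\{0\})\bigr)
=\tilde{\xi}_x\bigl(h^{-1}(B_n(0,1)\cap(\mathbb{R}^p\times\{0\}))\bigr),
\]
with $\Psi_y:V_y\to B_p(0,1)$ given by $\Psi_y(v)=\pi(g(v))$ where $\pi:\mathbb{R}^d\to\mathbb{R}^p$ is the projection onto the first $p$ coordinates. Then $\Psi_y$ is a Lip-$\gamma$ diffeomorphism, as the composition of the Lip-$\gamma$ diffeomorphism $g$ with a linear projection, and with inverse $u\mapsto g^{-1}(u,0)$. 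Since flows are reversible, the $\mathcal{D}$-diffeomorphism $\tilde{\xi}_T$ has a $\mathcal{D}$-diffeomorphism as inverse, so $x\in\mathbb{L}_y(\mathcal{D})$; every $v\in V_y$ is then of the form $\tilde{\xi}_x(s)$ and is thus reachable from $y$ by a $\mathcal{D}$-diffeomorphism, giving $V_y\subseteq\mathbb{L}_y(\mathcal{D})$. Endowing $V_y$ with the Lip-$\gamma$ structure induced by $\Psi_y^{-1}$ (Proposition \ref{Prop:TransportLipStructure}), the tangent space at $v=\tilde{\xi}_x(s)\in V_y$ corresponds to $\mathbb{R}^p$ in the chart; pushing forward via the chain rule applied to $\tilde{\xi}_x=g^{-1}\circ\pi_{n,p,d}\circ h$ identifies it with $(\tilde{\xi}_x)_*(s)(\mathbb{R}^n)$, and by the dimension count together with the inclusion $(\tilde{\xi}_x)_*(s)(\mathbb{R}^n)\subseteq P_{\mathcal{D}}(v)$, equality holds.

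The main obstacle will be rigorously justifying the constant-rank condition in a neighbourhood of $T$, which requires keeping track of which points belong to the same orbit and exploiting the $\mathcal{D}$-invariance of $P_{\mathcal{D}}$; a secondary technical point is ensuring that the iterated composition of flow maps retains honest Lip-$\gamma$ regularity (as opposed to merely almost-Lip-$\gamma$ regularity) on the bounded domain required to feed into the Constant Rank Theorem.
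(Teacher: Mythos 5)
Your proposal is correct and follows essentially the same route as the paper's proof: parametrise a neighbourhood of $y$ via Theorem \ref{theo:GeneratePD}, bound the rank of $(\tilde{\xi}_{x})_*$ by $p$ through its image lying in $P_{\mathcal{D}}$ together with the constancy of $\dim P_{\mathcal{D}}$ on orbits, apply the Lipschitz constant rank theorem, and take $\Psi_y$ to be the projection of $g$, your $V_y$ coinciding with the paper's $\tilde{\xi}_{x}(U)$ via the commutative diagram. The ``main obstacle'' you flag is in fact not one: Theorem \ref{ConstantRankTheo} as stated only requires rank at most $p$ on the domain plus rank exactly $p$ at the single point $T$, which is precisely what your orbit argument provides, so no lower-semicontinuity step is needed.
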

	\begin{proof}
	By Theorem \ref{theo:GeneratePD}, let $n\in\mathbb{N}^*$,  $\xi\in\mathcal{D}^n$, $T\in\mathbb{R}^n$ and $x\in \mathbb{R}^d$ such that $\tilde{\xi}(T,x)=y$ and $(\tilde{\xi}_{x})_*(T)(\mathbb{R}^n)= P_{\mathcal{D}}(y)$. By the same theorem, for every $t\in\mathbb{R}^n$,
	\[(\tilde{\xi}_{x})_*({t})(\mathbb{R}^n)\subseteq P_{\mathcal{D}}(\tilde{\xi}_{x}({t})).\]
	Since $\mathrm{dim}( P_{\mathcal{D}}(\tilde{\xi}_{x}({t})))= p$, then $\mathrm{rank}(\tilde{\xi}_{x}({t}))_*\leq p$. Therefore the map $(\tilde{\xi}_{x})_*$ is of maximal rank $p$ at $T$. Moreover, by Theorems \ref{theo:LipRegFlow} and \ref{thm:CompoLip}, $\tilde{\xi}_{x}$ defines a Lip-$\gamma$ map on every open bounded set of $\mathbb{R}^n$. By Theorem \ref{ConstantRankTheo}, there exist then an open subset $U$ of $\mathbb{R}^n$ containing $T$, an open subset $V$ of $\mathbb{R}^d$ containing $y$, Lip-$\gamma$ diffeomorphisms $f: U\to B_n(0,1)$ and $g:V\to B_d(0,1)$ (centred at $T$ and $y$ respectively) such that the following diagram commutes
	\[\begin{array}{ccc}
	U (\subseteq \mathbb{R}^n)&\overset{\tilde{\xi}_{x}}\longrightarrow&V(\subseteq \mathbb{R}^d)\\
	f\Big\downarrow&&\Big\downarrow g\\
	B_n(0,1)&\overset{\pi_{n,p,d}}\longrightarrow& B_d(0,1)\\
	\end{array}\]
	In particular, for all $t\in U$, $({\tilde{\xi}_{x}})_{*}(t)$ is of rank $p$.	Define $V_y=\tilde{\xi}_{x}(U)$ and the map $\Psi_y:V_y \to B_p(0,1)$ as the composition $\Psi_y:=\pi_{d,p,p}\circ g$. Then $\Psi_y$ is invertible and $\Psi_y^{-1}=g^{-1}\circ \pi_{p,p,d}$. Hence, by Proposition \ref{CompoLinFunc}, $\Psi_y$ is a Lip-$\gamma$ diffeomorphism. This induces a natural $p$-dimensional Lip-$\gamma$ structure on $V_y$ by Proposition \ref{Prop:TransportLipStructure} and Corollary \ref{UnitBallLip}. For $t\in U$, decomposing the invertible map $\Psi_y^{-1}$ as $\Psi_y^{-1}= \tilde{\xi}_{x} \circ f^{-1}\circ \pi_{p,p,n}$, one gets
	\[T_{\tilde{\xi}(t,x)}V_y\subseteq \tilde{\xi}_{x}(t)_*(\mathbb{R}^n) \subseteq P_{\mathcal{D}}(\tilde{\xi}(t,x)).\]
	As $\tilde{\xi}(t,x)\in\mathbb{L}_{y}(\mathcal{D})$, then $P_{\mathcal{D}}(\tilde{\xi}(t,x))$ is of the same dimension as $P_{\mathcal{D}}(y)$, i.e. $p$, the same as $T_{\tilde{\xi}(t,x)}V_y$. Therefore, we conclude that $T_{\tilde{\xi}(t,x)}V_y= P_{\mathcal{D}}(\tilde{\xi}(t,x))$.
	\end{proof}	
	\begin{Rem} Using Theorem \ref{LipCharts}, one may build a locally Lip-$\gamma$ structure on $\mathbb{L}_{y}(\mathcal{D})$ as defined in \cite{BL}. However, as we didn't show the existence of solutions to rough differential equations in such setting, we will restrict ourselves to local arguments using the above propo\-sition. 
	\end{Rem}
	\section{Rough Calculus on manifolds}\label{sec:RoughOnManifolds}
	\subsection{Rough paths on manifolds}
	In order to carry out our main argument, we need to generalise the notion of rough paths to manifolds.  As rough calculus on manifolds is less known among rough paths specialists, and in view of the multiple manifold approaches available in the literature, we will briefly recall in this section the main definitions and results that we will subsequently use, most of which are due to \cite{CLL2}.\\
	
	We fix a notation for the integral of a rough path, when endowed with a starting point, along a one-form.
	\begin{Notation} Let $\gamma>p\geq 1$ and $T\geq 0$. Let $E$ and $F$ be two Banach spaces. Let $x\in E$ and $\mathbf{X}$ be a geometric $p$-rough path on $E$ defined over the interval $[0,T]$. Let $\alpha:E\to \mathcal{L}(E,F)$ be a Lip-$(\gamma-1)$ one-form. We denote by $\int\alpha(x,\mathbf{X})\mathrm{d}\mathbf{X}$ the integral of the rough path $\mathbf{X}$ along $\alpha$ when endowed with $x$ as a starting point, i.e. the trace of $\mathbf{X}$ is understood to be given by the path $t\mapsto x+\mathbf{X}^1_{0,t}$.
	\end{Notation}
	
	In the absence of the linear structure, rough paths on Lipschitz manifolds can be defined using a functional approach.
	\begin{Def}\cite{CLL2}\label{Def:RPManCLLy} Let $\gamma_0, p\in \mathbb{R}$ such that $\gamma_0>p\geq1$ and $T\geq0$. Let $M$ be a $\textrm{Lip}-\gamma_0$ manifold and $x\in M$. $\mathbb{X}$ is a (geometric) $p$-rough path over $[0,T]$ on $M$ starting at $x$ if, for every $\gamma\in \mathbb{R}$ such that $\gamma_0\geq\gamma>p$ and every Banach space $E$, the following conditions are satisfied:\begin{enumerate}
	\item $\mathbb{X}$ maps compactly supported $\textrm{Lip}-(\gamma-1)$ $E$-valued one-forms on $M$ to $E$-valued geometric $p$-rough paths (in the classical sense).
	\item There exists a control $\omega$ such that for every compactly supported $E$-valued $\textrm{Lip}-(\gamma-1)$ one-form $\alpha$ on $M$, $\mathbb{X}(\alpha)$ is controlled by $\|\alpha\|_{\textrm{Lip}-(\gamma-1)}\omega$, i.e.
	\[\forall (s,t)\in \Delta_{[0,T]}, \forall i\in [\![1,[p]]\!]:\quad \|\mathbb{X}(\alpha)_{(s,t)}^i\|\leq\frac{(\|\alpha\|_{\textrm{Lip}-(\gamma-1)}\omega(s,t))^{i/p}}{\beta_p (i/p)!}.\]
	\item (Chain rule) For every Banach space $F$ and every compactly supported $\textrm{Lip}-\gamma$ map $\psi: M\to F$ and every $E$-valued $\textrm{Lip}-(\gamma-1)$ one-form $\alpha$ on $F$ we have
	\[\mathbb{X}(\psi^*\alpha)=\int{\alpha (\psi(x),\mathbb{X}(\psi_*)) \mathrm{d}\mathbb{X}(\psi_*)}.\]
	\end{enumerate}
	\end{Def}
	Naturally, the classical and manifold notions of rough paths on finite-dimensional vector spaces (endowed with their natural Lipschitz structures) are the same. More explicitly, given a geometric $p$-rough path $\mathbf{X}$ with a starting point $x$ in $\mathbb{R}^d$, one can define a $p$-rough path $\mathbb{X}$ on $\mathbb{R}^d$ in the manifold sense in the following way. For every compactly supported Banach space-valued $\textrm{Lip}-(\gamma-1)$ one-form $\alpha$ on $\mathbb{R}^d$
	\[\mathbb{X}(\alpha):=\int\alpha(x,\mathbf{X})\mathrm{d}\mathbf{X}.\]
	Conversely, the functional $\mathbb{X}$ completely determines the classical rough path $\mathbf{X}$. The proof relies on all three axioms of Definition \ref{Def:RPManCLLy} and can be found in \cite{CLL2}.\\
	
	Next, we adapt the notion of support of rough paths.
	\begin{Def}\cite{CLL2} Let $\gamma_0> p\geq 1$ and $M$ be a $\textrm{Lip}-\gamma_0$ manifold. Let $\mathbb{X}$ be a $p$-rough path on $M$ with starting point $x$. 
	\begin{enumerate}
	\item Given an open subset $U$ of $M$, we say that $\mathbb{X}$ misses $U$ if, for every $\gamma \in (p, \gamma_0]$ and every Banach space-valued compactly supported Lip-$(\gamma-1)$ one-form $\alpha$ on $M$ with support in $U$, we have $\mathbb{X}(\alpha)=0$.
	\item We define the support of the rough path $\mathbb{X}$ as follows
	\[\mathrm{supp}(\mathbb{X}):=\{x\}\bigcup \left(M-\bigcup_{\mathbb{X} \textrm{ misses } U}U\right).\]
	\end{enumerate}
	\end{Def}
	Once again, the classical and manifold notions of support of a rough path coincide. In this case, the manifold support of a classical rough path (with a starting point) viewed as a manifold rough path is given by the image of its trace.\\
	More generally, this manifold notion of support generalises several key properties of the classical supports of rough paths. For instance, the support of a rough path (on a manifold) is a compact set \cite{CLL2}. Moreover, the integrals of a rough path along two one-forms that agree on an open set containing its support are the same.
	\begin{Prop}\cite{CLL2}\label{RPIntegralDependenceSupport} Let $\gamma_0\geq \gamma> p\geq 1$. Let $M$ be a Lip-$\gamma_0$ manifold, $U$ be an open subset of $M$ and $E$ be a Banach space. Let $\alpha$ and $\beta$ be two $E$-valued compactly supported Lip-$(\gamma-1)$ one-forms such that $\alpha_{|U}=\beta_{|U}$. Let $\mathbb{X}$ be a $p$-rough path in $M$ such that $\mathrm{supp}(\mathbb{X})\subset U$, then $\mathbb{X}(\alpha)= \mathbb{X}(\beta)$.
	\end{Prop}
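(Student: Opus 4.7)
The plan is to reduce the proposition to the following statement: if $\delta$ is a compactly supported $\textrm{Lip}-(\gamma-1)$ one-form on $M$ whose support is disjoint from $\mathrm{supp}(\mathbb{X})$, then $\mathbb{X}(\delta)=0$. Once this is established, I would apply it to $\delta := \alpha - \beta$: since $\alpha$ and $\beta$ agree on $U$, the one-form $\delta$ vanishes on $U$, so $\mathrm{supp}(\delta)\subseteq M\setminus U$; combined with $\mathrm{supp}(\mathbb{X})\subseteq U$, this yields $\mathrm{supp}(\delta)\cap\mathrm{supp}(\mathbb{X})=\varnothing$. The equality $\mathbb{X}(\alpha)=\mathbb{X}(\beta)$ then follows from the linearity of the functional $\mathbb{X}$ in its one-form argument, which is a structural property of the construction in \cite{CLL2}.

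To prove the reduced statement, I would unwind the definition of $\mathrm{supp}(\mathbb{X})$. Since the starting point $x$ lies in $\mathrm{supp}(\mathbb{X})$, it cannot lie in $\mathrm{supp}(\delta)$, hence
\[\mathrm{supp}(\delta)\subseteq M\setminus\mathrm{supp}(\mathbb{X})\subseteq\bigcup_{\mathbb{X}\text{ misses }V}V.\]
As $\mathrm{supp}(\delta)$ is compact by assumption, one can extract a finite subcover $V_1,\dots,V_N$ of $\mathrm{supp}(\delta)$ by open sets that $\mathbb{X}$ misses. Picking a $\textrm{Lip}-\gamma_0$ partition of unity $(\rho_0,\rho_1,\dots,\rho_N)$ subordinate to the open cover $\{M\setminus\mathrm{supp}(\delta),V_1,\dots,V_N\}$ of $M$ (their existence being standard on a Lipschitz manifold, and already tacit in Theorem \ref{ExtendLipMapManifold}), one has $\rho_0\delta\equiv 0$ and $\delta=\sum_{i=1}^{N}\rho_i\delta$. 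Each $\rho_i\delta$ is then a compactly supported $\textrm{Lip}-(\gamma-1)$ one-form with support in $V_i$, where the preservation of Lipschitz regularity under multiplication by a $\textrm{Lip}-\gamma_0$ scalar function is a chart-by-chart matter handled by the bilinear composition results of Section \ref{sec:LipFlows} (e.g. Proposition \ref{Compobifunc}).

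Since $\mathbb{X}$ misses $V_i$ and $\rho_i\delta$ is compactly supported in $V_i$, one obtains $\mathbb{X}(\rho_i\delta)=0$ for every $i$. By the additivity of $\mathbb{X}$ on one-forms, $\mathbb{X}(\delta)=\sum_{i=1}^{N}\mathbb{X}(\rho_i\delta)=0$, concluding the argument. The main obstacle is the bookkeeping ensuring that all localisation operations respect the manifold-level $\textrm{Lip}-(\gamma-1)$ regularity, namely that each product $\rho_i\delta$ is genuinely a $\textrm{Lip}-(\gamma-1)$ one-form in the sense of Section \ref{subsec:LipManifoldMaps}, and that suitable $\textrm{Lip}-\gamma_0$ partitions of unity subordinate to locally finite open covers exist globally on $M$. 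Both reduce to standard technical facts about the Lipschitz atlas of $M$ together with the bilinearity results of Section \ref{sec:LipFlows}. The linearity of $\mathbb{X}$ in its one-form argument, though not explicitly listed in Definition \ref{Def:RPManCLLy}, is a built-in feature of the functional construction in \cite{CLL2} (inherited from the linearity of classical rough path integration via the chain rule).
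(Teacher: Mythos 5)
The localisation bookkeeping in your argument is sound and not the issue: covering $\mathrm{supp}(\delta)$ by finitely many open sets that $\mathbb{X}$ misses, choosing a Lipschitz partition of unity, and checking via Proposition \ref{Compobifunc} that each $\rho_i\delta$ is still a compactly supported $\textrm{Lip}-(\gamma-1)$ one-form supported in a missed set, are all fine (note, incidentally, that the paper itself gives no proof of this proposition --- it is imported from \cite{CLL2} --- so your attempt has to stand on its own). The genuine gap is precisely the step you flag and then wave through: the ``linearity of $\mathbb{X}$ in its one-form argument.'' This is not among the axioms of Definition \ref{Def:RPManCLLy}, and it is not a built-in feature of rough integration. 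The value $\mathbb{X}(\alpha)$ is a full geometric $p$-rough path, and geometric $p$-rough paths do not form a vector space: levelwise sums violate Chen's relation, so the identity $\mathbb{X}(\delta)=\sum_{i}\mathbb{X}(\rho_i\delta)$ does not even parse as an equality of rough paths. Already for classical rough integration, $\int(\alpha_1+\alpha_2)(x,\mathbf{X})\,\mathrm{d}\mathbf{X}$ is not obtained by adding $\int\alpha_1(x,\mathbf{X})\,\mathrm{d}\mathbf{X}$ and $\int\alpha_2(x,\mathbf{X})\,\mathrm{d}\mathbf{X}$ level by level: at level two the cross iterated integrals of $\int\alpha_1$ against $\int\alpha_2$ appear, and only the first level is additive in the one-form. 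For the same reason the opening reduction ``$\mathbb{X}(\alpha-\beta)=0\Rightarrow\mathbb{X}(\alpha)=\mathbb{X}(\beta)$'' is unjustified: the proposition asserts equality of the whole rough paths $\mathbb{X}(\alpha)$ and $\mathbb{X}(\beta)$, and the additivity you can actually salvage (first level only) would at best give agreement of the first levels, which does not determine the higher ones.

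A route that avoids subtracting one-forms altogether is to prove the equality directly by localising in time rather than only in space: by Theorem \ref{SingleChartRP} (applied successively along $[0,T]$), the restrictions of $\mathbb{X}$ to sufficiently short subintervals are supported in single chart domains; on such a subinterval one pushes forward by the chart map using the chain-rule axiom (3) of Definition \ref{Def:RPManCLLy}, reducing the two integrals to classical rough integrals of the pulled-back one-forms against the same classical rough path in $B_n(0,1)$; there one invokes the locality of classical rough integration --- the almost-multiplicative functionals defining $\int\alpha(x,\mathbf{X})\,\mathrm{d}\mathbf{X}$ only evaluate $\alpha$ and its first $[\gamma-1]$ derivatives at points of the trace, so two one-forms agreeing on an open neighbourhood of the trace have equal integrals --- together with $\mathrm{supp}(\mathbb{X})\subset U$; finally one concatenates the subintervals by multiplicativity (Chen's identity) to conclude $\mathbb{X}(\alpha)=\mathbb{X}(\beta)$ on all of $[0,T]$. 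Whichever way you organise it, the point that must be addressed head-on, and that your proposal currently assumes, is how to pass from local vanishing/agreement statements to an equality of \emph{full} rough paths without ever adding rough paths.
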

	The rigid nature of the axioms in Definition \ref{Def:LipManCLL2} has the benefit of ensuring that  the restrictions of a rough path to shorter intervals are supported in the domains of the Lipschitz charts; the lengths of these intervals depending only on the control of the rough path.
	\begin{theo}\cite{CLL2}\label{SingleChartRP} Let $\gamma_0> p\geq 1$ and $\omega$ be a control. Let $M$ be a $\textrm{Lip}-\gamma_0$ manifold. Then there exists $t_0>0$ such that, for every $p$-rough path $\mathbb{X}$ on $M$ controlled by $\omega$, there exists a chart domain $U$ such that $\mathrm{supp}(\mathbb{X}_{|[0,t_0]})\subset U$.
	\end{theo}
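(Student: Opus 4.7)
The plan is to select, for each starting point $x$ of a rough path $\mathbb{X}$ controlled by $\omega$, a chart $(U_{i_0},\phi_{i_0})$ with $x \in U_{i_0}^{\delta}$ and to use the control estimate on the induced Euclidean rough path $\mathbb{X}(\phi_{i_0,*})$ to force the manifold support of $\mathbb{X}_{|[0,t_0]}$ to lie inside $U_{i_0}$. The threshold $t_0$ is to be chosen depending only on $\omega$ and the atlas constants $\delta,L$ of Definition~\ref{Def:LipManCLL2}.

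First, I would check that each chart map $\phi_i$ is itself Lip-$\gamma_0$ on $M$ (in the manifold sense) with norm bounded uniformly in $i$ by some constant $C_0$ depending only on $L$ and the cover geometry; this follows because the local representations of $\phi_i$ in the atlas are, up to restriction and extension, transition maps with Lip-$\gamma_0$ norm at most $L$. Consequently $\phi_{i,*}$ is Lip-$(\gamma_0-1)$ with a uniform bound. Since $(U_i^{\delta})_i$ covers $M$, pick $i_0$ with $\|\phi_{i_0}(x)\| \leq 1-\delta$. By the control axiom of Definition~\ref{Def:RPManCLLy}, the classical rough path $\mathbb{X}(\phi_{i_0,*})$ in $\mathbb{R}^n$ satisfies
\[
\|\mathbb{X}(\phi_{i_0,*})^{1}_{0,t}\| \leq \frac{(C_0\,\omega(0,t))^{1/p}}{\beta_p},
\]
and for $t_0$ small enough (depending only on $\omega, C_0, \delta$) the right-hand side is at most $\delta/4$ on $[0,t_0]$. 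The Euclidean trace $t \mapsto \phi_{i_0}(x) + \mathbb{X}(\phi_{i_0,*})^{1}_{0,t}$ then stays in $B_n(\phi_{i_0}(x),\delta/4) \subset B_n(0, 1-3\delta/4)$.

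Second, set $V := (\phi_{i_0}|_{U_{i_0}})^{-1}(B_n(\phi_{i_0}(x), \delta/2))$, an open subset of $U_{i_0}$ with compact closure in $U_{i_0}$. The claim is that $\mathrm{supp}(\mathbb{X}_{|[0,t_0]}) \subset \overline{V}$, which ends the proof. I would take an arbitrary $q \in M \setminus \overline{V}$ together with an open neighborhood $W$ of $q$ disjoint from $\overline{V}$, and verify that $\mathbb{X}_{|[0,t_0]}$ misses $W$, i.e. that $\mathbb{X}_{|[0,t_0]}(\alpha) = 0$ for every compactly supported Lip-$(\gamma-1)$ one-form $\alpha$ with support in $W$. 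The key tool is the chain rule with a smoothly cut-off chart: pick a Lip-$\gamma_0$ bump $\chi: M \to [0,1]$ equal to $1$ on a neighborhood of $\overline{V}$ and supported in $U_{i_0}$, and set $\psi := \chi\,\phi_{i_0}$. Then $\psi$ is a compactly supported Lip-$\gamma_0$ map with $\psi_* = \phi_{i_0,*}$ on a neighborhood of the trace, and the chain rule axiom identifies $\mathbb{X}_{|[0,t_0]}(\alpha)$ with a classical rough integral against $\mathbb{X}(\psi_*)$ of a pullback one-form whose support avoids the trace, and therefore vanishes.

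The main obstacle is this final step: one-forms $\alpha$ whose support lies entirely outside $U_{i_0}$ are not naturally pullbacks through $\psi$, so a clean reduction to a Euclidean rough integral requires either a partition-of-unity decomposition $\alpha = \sum_j \chi_j \alpha$ subordinate to the atlas (applying the chain rule chart by chart with its own $\phi_j$) or a separate localization argument showing that $\mathbb{X}(\alpha)$ depends only on the values of $\alpha$ in a neighborhood of the trace image. Either approach must carefully exploit the uniform atlas constants so that the cutoff time $t_0$ remains independent of $\mathbb{X}$ and of the chart $i_0$; the uniformity of these constants across the atlas is precisely what makes a single universal $t_0$ possible, and is the substance of the theorem.
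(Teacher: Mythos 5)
Note first that the paper does not prove this statement at all: Theorem \ref{SingleChartRP} is quoted verbatim from \cite{CLL2}, so there is no internal proof to compare against; your attempt has to stand on its own. Its first half does stand: by the atlas axioms each $\phi_{i_0}$ is a compactly supported Lip-$\gamma_0$ map with norm bounded by the uniform constant $L$, so $\phi_{i_0,*}$ is an admissible one-form, the control axiom gives $\|\mathbb{X}(\phi_{i_0,*})^{1}_{0,t}\|\leq (L\,\omega(0,t))^{1/p}/\beta_p$, and since $(U_i^{\delta})_i$ covers $M$ one can choose $t_0$ from $\omega$, $L$, $\delta$ alone. That part is routine.

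The genuine gap is exactly the step you flag and then do not carry out: deducing from the smallness of the \emph{single} Euclidean rough path $\mathbb{X}(\phi_{i_0,*})$ that $\mathbb{X}_{|[0,t_0]}(\alpha)=0$ for \emph{every} one-form $\alpha$ supported away from $U_{i_0}$. This is not a technical footnote but the entire content of the theorem. A manifold rough path has no intrinsic trace; $\phi_{i_0}$ is compactly supported and identically zero outside a compact set, so small increments of $\mathbb{X}(\phi_{i_0,*})$ are a priori compatible with the path ``leaving'' $U_{i_0}$, moving in a region invisible to $\phi_{i_0}$, and charging one-forms supported far away. Your cut-off chain-rule device only treats one-forms of the form $\psi^*\beta$, which a general $\alpha$ supported in a distant open set is not; and the alternative you mention, a localization statement that $\mathbb{X}(\alpha)$ depends only on $\alpha$ near the support, is Proposition \ref{RPIntegralDependenceSupport} and is circular here, since it presupposes knowing where the support lies --- which is precisely what is to be proved. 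Closing the argument requires testing $\mathbb{X}$ against one-forms adapted to \emph{all} charts simultaneously, using the uniform constants $\delta$ and $L$ of Definition \ref{Def:LipManCLL2} (this rigidity is the reason the definition is stated with such uniformity), and this is the part of the \cite{CLL2} proof your proposal replaces by a description of two possible strategies rather than an argument. As written, the proposal establishes only the easy quantitative choice of $t_0$, not the support inclusion.
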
	
	\begin{Rem} With the notations of Definition \ref{Def:LipManCLL2} and Theorem \ref{SingleChartRP}, the chart $(\phi,U)$ can be chosen to be any such chart such that $B(\phi(x), \delta)\subset B(0,1)$, $x$ being the starting point of $\mathbb{X}$. The time $t_0$ depends only the constants involved in the definition of the Lipschitz atlas on $M$ and the control $\omega$.
	\end{Rem}
	\subsection{RDEs on manifolds}
	In this subsection, we will recall the most important results of the theory of rough differential equations on manifolds as developed in \cite{CLL2}.	
	\begin{Notation}
	Let $M$ and $N$ be two smooth manifolds and $g$ be a transport map from $M$ to $N$.
	If $\alpha$ is a Banach space-valued one form on $M\times N$, we denote by $\alpha^g$ the one-form defined on  $M\times N$ as follows. For all $z\in M\times N$ and $v=(v_x,v_y) \in T_z(M\times N)$,
	\[	\alpha^g(z)(v):=
	 \alpha(z)(v_x, g(z)(v_x)). 
	\]
	\end{Notation}
	We can now introduce a functional definition of solutions to RDEs on manifolds.
	\begin{Def}\label{Def:RDE-Man} \cite{CLL2} Let $\gamma> p\geq 1$. Let $M$ and $N$ be two Lip-$\gamma$ manifolds, $x_0 \in M$ and $y_0\in N$. Let $\mathbb{X}$ be a $p$-rough path in $M$ with starting point $x_0$. Let $g$ be a transport map from $M$ to $N$. We say that a rough path $\mathbb{Y}$ in $N$ with starting point $y_0$ is a solution to the RDE
	\begin{equation}\label{Eq:MainRDEMan}
	\left\{\begin{array}{rcll}
	\mathrm{d}\mathbb{Y}_t &=& g(\mathbb{X}_t,\mathbb{Y}_t)\mathrm{d}\mathbb{X}_t,& \quad \forall t\leq T,\\
	\mathbb{Y}_0&=& y_0\\
	\end{array}
	\right.
	\end{equation}
	if there exists a $p$-rough path $\mathbb{Z}$ in $M\times N$ with stating point $(x_0,y_0)$ such that $\pi_M(\mathbb{Z})=\mathbb{X}$, $\pi_N(\mathbb{Z})=\mathbb{Y}$ and for all compactly supported Banach-space valued Lip-$(\gamma-1)$ one-forms $\alpha$ on $M\times N$, one has $\mathbb{Z} (\alpha)=\mathbb{Z} (\alpha^{g})$.
	\end{Def}
	\begin{theo}\label{UniqueSolRDEMan}\cite{CLL2} In the setting of Definition \ref{Def:RDE-Man}, if $g$ is Lip-$\gamma$ (in the sense of Definition \ref{Def:LipConnMan}), then there exists a unique solution to the RDE (\ref{Eq:MainRDEMan}).
	\end{theo}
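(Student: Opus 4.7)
The plan is to reduce the question of existence and uniqueness on manifolds to the classical Lyons existence and uniqueness theorem for RDEs in Euclidean space, working chart by chart, and then to concatenate local solutions to cover the whole interval $[0,T]$.

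For existence, I would first apply Theorem \ref{SingleChartRP} to obtain $t_0>0$ and a chart $(U,\phi)$ of $M$ containing $x_0$ such that $\mathrm{supp}(\mathbb{X}_{|[0,t_0]}) \subset U$. I then pick a chart $(V,\psi)$ of $N$ containing $y_0$ with $\psi(y_0)$ at positive distance from the unit sphere (which is possible by the $\delta$-covering axiom of Definition \ref{Def:LipManCLL2}). In these local coordinates, $\mathbb{X}_{|[0,t_0]}$ corresponds to a classical geometric $p$-rough path $\mathbf{X}$ in $\mathbb{R}^n$ supported in $B_n(0,1)$, and by the hypothesis on $g$ the representation $g_{(\phi,\psi)}$ is Lip-$\gamma$ on $B_n(0,1)\times B_d(0,1)$, which I would extend via Theorem \ref{ExtendLipMapManifold} to a compactly supported Lip-$\gamma$ map $\hat g$ on $\mathbb{R}^n\times\mathbb{R}^d$. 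Lyons' classical existence theorem then furnishes a unique classical solution $\mathbf{Y}$ to the RDE driven by $\mathbf{X}$ with coefficient $\hat g$, starting from $\psi(y_0)$.

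Second, by shrinking $t_0$ if necessary (using the continuity of the trace of $\mathbf{Y}$ and a control estimate), I would ensure that the trace of $\mathbf{Y}$ stays strictly inside $B_d(0,1)$, so that the extension of $g_{(\phi,\psi)}$ plays no role (via Proposition \ref{RPIntegralDependenceSupport}). Pushing $\mathbf{Y}$ forward through $\psi^{-1}$ yields a candidate rough path $\mathbb{Y}$ on $N$ in the functional sense of Definition \ref{Def:RPManCLLy}, and from the classical joint lift $(\mathbf{X},\mathbf{Y})$ one analogously builds a $p$-rough path $\mathbb{Z}$ on $M\times N$ with the correct projections and starting point. To verify that $\mathbb{Z}$ solves (\ref{Eq:MainRDEMan}), for a one-form $\alpha$ on $M\times N$ I would use the chain-rule axiom to rewrite both $\mathbb{Z}(\alpha)$ and $\mathbb{Z}(\alpha^g)$ as classical rough integrals in $\mathbb{R}^{n+d}$ of appropriate pullback one-forms, and then observe that the equality is precisely the classical defining identity for $(\mathbf{X},\mathbf{Y})$ being a solution to the RDE associated with $\hat g$. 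For uniqueness, given any two solutions $\mathbb{Y},\mathbb{Y}'$ with joint paths $\mathbb{Z},\mathbb{Z}'$, the same chart reduction shows that both local representations $(\mathbf{X},\mathbf{Y})$ and $(\mathbf{X},\mathbf{Y}')$ satisfy the same classical RDE driven by $\hat g$, hence coincide. Iterating the construction starting from $\mathbb{Y}_{t_0}, \mathbb{Y}_{2t_0},\ldots$, and using that $t_0$ depends only on the atlas constants and on $\omega$, covers the whole interval $[0,T]$ in finitely many steps.

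The main obstacle is the verification of the functional identity $\mathbb{Z}(\alpha)=\mathbb{Z}(\alpha^g)$ for every compactly supported Lip-$(\gamma-1)$ one-form $\alpha$ on $M\times N$, since such an $\alpha$ need not be supported in a single chart. The argument requires a partition of unity subordinate to the product atlas of $M\times N$, a decomposition of $\alpha$ into summands supported in individual charts $(U_i\times V_j,(\phi_i,\psi_j))$, the systematic use of the chain-rule axiom to pull back each summand to Euclidean space, and careful bookkeeping of the compatibility between the local representations $\hat g_{ij}$ of the transport on overlapping charts, so that the identity reduces, on each chart, to the classical defining property of $(\mathbf{X},\mathbf{Y})$ and thus holds globally.
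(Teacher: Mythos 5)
The paper does not prove this theorem at all: it is imported verbatim from \cite{CLL2}, and the only indication of its proof is the remark following Proposition \ref{ExistSolSubm}, which says that the argument of \cite{CLL2} rests on the universal limit theorem in Banach spaces, the solution being ``first constructed on the unit balls before being pushed forward onto the manifold.'' Your chart-by-chart reduction via Theorem \ref{SingleChartRP}, the local solution by Lyons' classical theorem, the pushforward, and the iteration over a time grid determined by the control is precisely that strategy, so your proposal is consistent with the proof the paper points to; the genuinely technical steps (checking that the pushed-forward functional satisfies all three axioms of Definition \ref{Def:RPManCLLy}, and the partition-of-unity verification of $\mathbb{Z}(\alpha)=\mathbb{Z}(\alpha^g)$ for one-forms not supported in a single product chart) are exactly what you flag as the main obstacle, and they constitute the substance of the cited proof rather than something checkable against this paper.
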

	\section{The accessibility property of geometric RDEs}\label{sec:MainThm}
	\subsection{Local solutions on orbits}
	We now show that one may make sense of and locally solve RDEs on an orbit of Lipschitz vector fields.
	\begin{Prop}\label{ExistSolSubm} Let $p\geq 1$ and $\gamma>p+1$. Let $n, d\in\mathbb{N}^*$ and $T>0$. Let $\mathcal{D}=\{f^1,\ldots, f^n\}$ be a family of Lip-$\gamma$ vector fields on $\mathbb{R}^d$ and define the map $f:\mathbb{R}^d\to \mathcal{L}(\mathbb{R}^n,\mathbb{R}^d)$ by
	\[ \forall y\in \mathbb{R}^d, \forall x=(x_1,\ldots,x_n) \in \mathbb{R}^n: \quad
	f(y)(x)=\sum_{i=1}^n x_if^i(y).
	\]
	Let $y\in \mathbb{R}^d$. Let $\mathbb{X}$ be a geometric $p$-rough path on $\mathbb{R}^n$. Then there exists $T_0\in (0,T]$ such that the rough differential equation
	\begin{equation}\label{eq:RDESubmanifold}
	\left\{\begin{array}{l}
	\mathrm{d}\mathbb{Y}_t=f(\mathbb{Y}_t)\mathrm{d}\mathbb{X}_t\\
	\mathbb{Y}_0=y\\
	\end{array}
	\right.
	\end{equation}
	admits a solution $\mathbb{Y}$ as a $p$-rough path in $V_y$ (defined in Theorem \ref{LipCharts}) defined over $[0,T_0]$.
	\end{Prop}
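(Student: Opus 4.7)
The plan is to translate the problem into an RDE on the Lipschitz manifold $V_y$ provided by Theorem~\ref{LipCharts} and to invoke the existence theorem for RDEs on manifolds (Theorem~\ref{UniqueSolRDEMan}).

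First I would apply Theorem~\ref{LipCharts} to obtain the neighbourhood $V_y \subseteq \mathbb{L}_y(\mathcal{D})$ together with the Lip-$\gamma$ diffeomorphism $\Psi_y : V_y \to B_p(0,1)$, which endows $V_y$ with a $p$-dimensional Lip-$\gamma$ structure whose tangent space at each $z \in V_y$ coincides with $P_{\mathcal{D}}(z)$. Since every $f^i \in \mathcal{D}$ satisfies $f^i(z) \in \mathcal{L}(\mathcal{D})(z) \subseteq P_{\mathcal{D}}(z) = T_z V_y$ for every $z \in V_y$, the restriction of $f$ to $V_y$ genuinely takes values in the tangent bundle of $V_y$. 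To trigger Corollary~\ref{Cor:VFasLipConnMan2}, I need a compactly supported version of $f|_{V_y}$. I would therefore fix a Lip-$\gamma$ cutoff $\chi : V_y \to [0,1]$, constructed by pulling back via $\Psi_y$ a smooth radial bump on $B_p(0,1)$, such that $\chi \equiv 1$ on $W_y := \Psi_y^{-1}(B_p(0,1/2))$ and $\mathrm{supp}(\chi) \subset \Psi_y^{-1}(B_p(0,3/4))$, and set $\tilde f := \chi f$. Combining Lemma~\ref{lemma:LipMapBallManifold} with Proposition~\ref{Compobifunc} (for the pointwise scalar action of $\chi$ on $f$) gives that $\tilde f$ is a compactly supported Lip-$\gamma$ map on $V_y$, so Corollary~\ref{Cor:VFasLipConnMan2} (applied with $F=\Psi_y^{-1}$) produces a Lip-$(\gamma-1)$ transport $\tilde g$ from $\mathbb{R}^n$ to $V_y$.

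Next, since $\gamma - 1 > p$ by hypothesis, I may regard both $\mathbb{R}^n$ and $V_y$ as Lip-$(\gamma-1)$ manifolds (via Theorem~\ref{EmbedLip}) and $\mathbb{X}$ as a $p$-rough path on $\mathbb{R}^n$ in the manifold sense through the canonical correspondence recorded after Definition~\ref{Def:RPManCLLy}. Theorem~\ref{UniqueSolRDEMan} then yields a unique solution $\tilde{\mathbb{Y}}$ on $V_y$ to the RDE driven by $\mathbb{X}$ with transport $\tilde g$ and starting point $y$, defined on the whole of $[0,T]$. Finally, I would invoke Theorem~\ref{SingleChartRP}, applied to the Lip-$\gamma$ atlas of $V_y$ containing a chart centred at $y$ with image $B_p(0,1)$ built from $\Psi_y$, to extract $T_0 \in (0,T]$ for which $\mathrm{supp}(\tilde{\mathbb{Y}}|_{[0,T_0]}) \subset W_y$. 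On $W_y$ one has $\tilde f \equiv f$, so $\tilde g$ agrees there with the transport $g$ induced by $f$; Proposition~\ref{RPIntegralDependenceSupport} then allows me to conclude that the pairings defining $\tilde{\mathbb{Y}}|_{[0,T_0]}$ are the same whether computed with $\tilde g$ or with $g$, so that $\tilde{\mathbb{Y}}|_{[0,T_0]}$ is indeed a solution of the original equation~(\ref{eq:RDESubmanifold}).

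The main obstacle I expect is the bookkeeping around identifying $V_y$ both as a subset of $\mathbb{R}^d$ (on which $f$ is originally defined) and as an intrinsic Lip-$\gamma$ manifold, and checking carefully that the transport $\tilde g$ delivered by Corollary~\ref{Cor:VFasLipConnMan2} really corresponds to the vector-field map $v \mapsto \tilde f(\cdot)(v)$ viewed through the tangent bundle of $V_y$. The one-derivative loss incurred in this translation is precisely what motivates the hypothesis $\gamma > p + 1$ (rather than the bare $\gamma > p$ one would expect for flat RDEs).
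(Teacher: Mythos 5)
Your overall strategy is the same as the paper's: modify $f$ to a compactly supported Lip-$\gamma$ map on $V_y$, obtain a Lip-$(\gamma-1)$ transport via Corollary \ref{Cor:VFasLipConnMan2}, solve the modified RDE with Theorem \ref{UniqueSolRDEMan} (using $\gamma-1>p$), localise in time with Theorem \ref{SingleChartRP}, and remove the modification with Proposition \ref{RPIntegralDependenceSupport}. The localisation step, however, contains a genuine gap as you have set it up. Theorem \ref{SingleChartRP} only guarantees that the restriction of the rough path to a short interval is supported in a \emph{chart domain of the Lipschitz atlas}. The Lip-$\gamma$ structure on $V_y$ is the one induced by $\Psi_y^{-1}$ from the unit-ball structure of Corollary \ref{UnitBallLip}, which is itself induced from $\mathbb{R}^p$ by the map $F$ of Lemma \ref{lemma:GlobalLipDiff}; in particular the atlas does \emph{not} contain a chart centred at $y$ with image $B_p(0,1)$ (a single global chart never constitutes a Lipschitz atlas, as recalled before Corollary \ref{UnitBallLip}). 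The chart domain centred at $y$ is $\Psi_y^{-1}\left(F(B_p(0,1))\right)=\Psi_y^{-1}\left(B_p\left(0,1/\sqrt{2}\right)\right)$. So Theorem \ref{SingleChartRP} only yields $\mathrm{supp}(\mathbb{Z}_{|[0,T_0]})\subset B_n(0,1)\times \Psi_y^{-1}\left(B_p\left(0,1/\sqrt{2}\right)\right)$, which strictly contains your plateau region $W_y=\Psi_y^{-1}\left(B_p(0,1/2)\right)$; on the difference your cutoff $\chi$ may be smaller than $1$, so $\alpha^{\tilde f}$ and $\alpha^{f}$ need not agree on an open set containing the support, and Proposition \ref{RPIntegralDependenceSupport} cannot be applied with your choice of radii.

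The gap is repairable, and the repair is exactly the paper's choice of radii: make the modified vector field agree with $f$ on $\Psi_y^{-1}\left(\overline{B_p\left(0,1/\sqrt{2}\right)}\right)$, i.e.\ on the closure of the chart domain centred at $y$, and give it support in a slightly larger set such as $\Psi_y^{-1}\left(B_p\left(0,\frac{1+\sqrt{2}}{2\sqrt{2}}\right)\right)$. Two further small points: the support control must be stated for the pair $\mathbb{Z}=(\mathbb{X},\widehat{\mathbb{Y}})$ on the product manifold $\mathbb{R}^n\times V_y$, since Definition \ref{Def:RDE-Man} compares one-forms on the product, so Theorem \ref{SingleChartRP} should be applied to $\mathbb{Z}$ rather than to the $V_y$-component alone; and the compactly supported Lip-$\gamma$ modification of $f$ is most directly produced by the extension theorem on manifolds (Theorem \ref{ExtendLipMapManifold}) applied to a restriction of $f$ to a precompact subset of $V_y$ — your cutoff construction also works, provided you justify that $\chi$ is Lip-$\gamma$ in the manifold sense, which again passes through $\Psi_y$ and $F$.
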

	\begin{proof} First note that since for all $v\in \mathbb{R}^n$ and $u\in V_y$, 
	\[f(u)(v)\in \mathcal{L}\left(\mathcal{D}\right)(u)\subseteq P_{\mathcal{D}}(u),\]
	the differential equation $(\ref{eq:RDESubmanifold})$ is well defined on $V_y$. Let $p=\mathrm{dim}P_{\mathcal{D}}(y)$. By Lemma \ref{lemma:LipMapBallManifold}, $f$ is Lipschitz on $V_y$ in the manifold sense. Let $\widehat{f}$ be a compactly supported Lip-$\gamma$ map with support in $\Psi_y^{-1}\left(B_p\left(0,\frac{1+\sqrt{2}}{2\sqrt{2}} \right)\right)$
	and such that it agrees with $f$ on 
	$\Psi_y^{-1}\left(\overline{B_p\left(0,1/\sqrt{2}\right)}\right)$\footnote{The choice of the value $1/\sqrt{2}$ is merely due to the fact that $B_p\left(0,1/\sqrt{2}\right)$ is a Lipschitz chart domain in $B_p\left(0,1\right)$ in the Lipschitz structure endowed by $F$ in Lemma \ref{lemma:GlobalLipDiff}.}. Then, by Corollary \ref{Cor:VFasLipConnMan2}, the corresponding transport $\widehat{g}$ is Lip-$(\gamma-1)$. By Theorem \ref{UniqueSolRDEMan} (and since $\gamma-1>p$), let $\widehat{\mathbb{Z}}=(\mathbb{X} ,\widehat{\mathbb{Y}})$ be a solution to the rough differential equation defined on $\mathbb{R}^n \times V_y$ by
	\[
	\left\{\begin{array}{l}
	\mathrm{d}\widehat{\mathbb{Y}}_t=\widehat{f}(\widehat{\mathbb{Y}}_t)\mathrm{d}\mathbb{X}_t\\
	\widehat{\mathbb{Y}}_0=y\\
	\end{array}
	\right.
	\]
	By Theorem \ref{SingleChartRP} (and the subsequent remark), let $T_0\in (0,T]$ such that $\mathbb{Z}:=\widehat{\mathbb{Z}}_{[0,T_0]}$ is supported in the chart domain $B_n(0,1) \times \Psi_y^{-1}\left(B_p\left(0,1/\sqrt{2}\right)\right)$ (where its starting point also lives). Let $\alpha$ be a compactly supported Lip-$(\gamma_0-1)$ Banach space valued one form on $\mathbb{R}^n \times V_y$, with $\gamma_0 \in (p,\gamma]$. By the definition of the solution to an RDE on a manifold, one has $\mathbb{Z}(\alpha)=\mathbb{Z}(\alpha^{\widehat{f}})$ (where $\widehat{f}$ is identified with its induced transport map). For $(x,u)\in \mathbb{R}^n \times V_y$ and $(v_x, v_u)\in \mathbb{R}^n \times T_uV_y$, one has by definition
	\[
	\alpha^{\widehat{f}}(x,u)(v_x,v_u)
	= \alpha(x,u)(v_x,\widehat{f}(u)(v_x)).
	\]
	From the above expression, one sees that $\alpha^{\widehat{f}}$ and $\alpha^{f}$ agree in the chart domain $B_n(0,1) \times \Psi_y^{-1}\left(B_p\left(0,1/\sqrt{2}\right)\right)$ which contains the support of $\mathbb{Z}$. Therefore, by Proposition \ref{RPIntegralDependenceSupport}, $\mathbb{Z}(\alpha^{\widehat{f}})	=\mathbb{Z}(\alpha^{f})$ and consequently
	\[\mathbb{Z}(\alpha)
	=\mathbb{Z}(\alpha^{\widehat{f}})
	=\mathbb{Z}(\alpha^{f}).
	\]
	Therefore $\mathbb{Z}$ is a solution to $(\ref{eq:RDESubmanifold})$ on $[0,T_0]$.\end{proof}
	\begin{Rem}
	The careful reader may have noticed the stronger than usual condition on the regularity of the vector fields $\gamma>p+1$ (instead of $\gamma>p$) in the statement of Proposition \ref{ExistSolSubm}. Let us say a few words about this:
	\begin{itemize}
		\item While one can theoretically assume first order objects on manifolds (such as vector fields, one-forms and transport maps) to be of any regularity, it is not geometrically sensible to assume these objects to be of at least the same regularity as the manifold itself: for example, one speaks only of $\mathcal{C}^{n-1}$ vector fields on $\mathcal{C}^{n}$ manifolds. There are two interlinked reasons for this. First, whenever two chart $(U_1,\varphi_1)$ and $(U_2,\varphi_2)$ have intersecting domains, one usually would want the regularity of such objects on the domain $U_1\cap U_2$ when pushed forward by $\varphi_2$ to be deduced from its regularity when pushed forward by $\varphi_1$. In this case, one encounters the pushforward of the transition map $\varphi_1 \circ \varphi_2^{-1}$, which is one degree less regular than the regularity of the manifold. Second, and in most practical and non-trivial examples, one cannot prove such strong regularity if derivatives are involved, e.g. Corollary \ref{Cor:VFasLipConnMan2}. Thus, Theorem \ref{UniqueSolRDEMan} holds mainly theoretically if we only assume $\gamma>p$.
		\item It will become clear later that, on the manifold $V_y$, one only needs an existence result for solutions to RDEs. It will be enough then to identify these manifold solutions with the classical ones on $\mathbb{R}^d$. The proof of Theorem \ref{UniqueSolRDEMan} is entirely based on the similar result (called the universal limit theorem) holding on Banach spaces (e.g. \cite{Lyons,CLL}): the solution is first constructed on the unit balls before being pushed forward onto the manifold. This existence and uniqueness result strongly relies on the assumption of the transports being of regularity $\gamma$ with $\gamma>p$. To relax this regularity assumption to $\gamma-1$, one may attempt to build on the very known existence (only) result by Davie \cite{Davie} then Friz and Victoir \cite{FV, FV2}. However, we will restrict ourselves here to the framework of the theory given in \cite{CLL2} as our aim is to illustrate the geometric argument used to answer the accessibility question.
	\end{itemize}
	\end{Rem}
	\subsection{Identification with classical solutions}
	The next two lemmas allow us to ``build our way back'' from local solutions to RDEs on the orbits to classical solutions on the Euclidean space.
	\begin{lemma}\label{OneFormSubman} Let $n, d\in\mathbb{N}^*$ and $\gamma>1$. Let $\mathcal{D}$ be a family of Lip-$\gamma$ vector fields on $\mathbb{R}^d$. Let $y\in \mathbb{R}^d$. Then for every Banach space-valued Lip-$(\gamma-1)$ one-form $\alpha$ on $\mathbb{R}^n\times \mathbb{R}^d$, $\alpha_{|\mathbb{R}^n\times V_y}$ is a Lip-$(\gamma-1)$ one-form on $\mathbb{R}^n\times V_y$, and there exists a constant $c_{\gamma, y}$ depending only on $\gamma$ and $\Psi_y$ such that
	\[\|\alpha_{|\mathbb{R}^n\times V_y}\|_{\textrm{Lip-}(\gamma-1)}\leq c_{\gamma, y} \|\alpha\|_{\textrm{Lip-}(\gamma-1)}.\]
	\end{lemma}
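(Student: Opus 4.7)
The plan is to unfold the manifold definition of a Lip-$(\gamma-1)$ one-form on $\mathbb{R}^n\times V_y$ and bound the pullback of $\alpha$ by every chart inverse by invoking Lemma \ref{lemma:PullBackOneForm}, uniformly in the choice of chart. First, I would spell out the atlas on $\mathbb{R}^n\times V_y$: by the product construction following Proposition \ref{Prop:EuclidLipManifoldEuclid}, and by Proposition \ref{Prop:TransportLipStructure} combined with Corollary \ref{UnitBallLip}, its chart inverses have the form
\[
\Phi_{x,z} := \bigl(\psi_z,\ \Psi_y^{-1}\circ F \circ \phi_x^{-1}\bigr)\colon B_n(0,1)\times B_p(0,1) \longrightarrow \mathbb{R}^n\times V_y,
\]
where $\psi_z$ and $\phi_x$ range over the standard chart inverses on $\mathbb{R}^n$ and $\mathbb{R}^p$, and $F$ is the Lipschitz bijection from Lemma \ref{lemma:GlobalLipDiff}.

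Next, I would establish that each $\Phi_{x,z}$ is $\textrm{Lip-}\gamma$ with a Lip-$\gamma$ norm bounded uniformly in $x$ and $z$ by a constant depending only on $\gamma$ and $\Psi_y$. The factors $\psi_z$ and $F\circ\phi_x^{-1}$ have Lip-$\gamma$ norms independent of $x$ and $z$, since the standard atlases on $\mathbb{R}^n$ and $\mathbb{R}^p$ are invariant under integer translations and $F$ is Lip of any degree by Lemma \ref{lemma:GlobalLipDiff}. Composing with $\Psi_y^{-1}$, which is Lip-$\gamma$ by Theorem \ref{LipCharts}, and appealing to Theorem \ref{thm:CompoLip} (and Proposition \ref{CompoLinFunc} if needed to treat the product as a pair of compositions) yields the desired uniform Lip-$\gamma$ bound on the product map.

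Finally, I would apply Lemma \ref{lemma:PullBackOneForm} to the map $\Phi_{x,z}$, which, being Lip-$\gamma$ on a bounded convex domain, is in particular almost Lip-$\gamma$ in the sense required, together with the Lip-$(\gamma-1)$ one-form $\alpha$. The lemma then bounds $\|\Phi_{x,z}^*\alpha\|_{\textrm{Lip-}(\gamma-1)}$ by a constant multiple of $\|\alpha\|_{\textrm{Lip-}(\gamma-1)}$, the multiplicative factor $c_{\gamma,y}$ depending only on $\gamma$ and on the uniform Lip-$\gamma$ bound on $\Phi_{x,z}$, hence only on $\gamma$ and $\Psi_y$. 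Since this holds uniformly in the chart indices $x$ and $z$, the definition of a Lip-$(\gamma-1)$ one-form on the manifold $\mathbb{R}^n\times V_y$ is met with the advertised norm bound.

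The main obstacle is bookkeeping: carefully tracking the composition of $\phi_x^{-1}$, $F$ and $\Psi_y^{-1}$ across every chart, and verifying that the resulting Lip-$\gamma$ bounds do not degrade as the lattice index $x$ varies; conceptually the argument is a direct translation of the manifold definition followed by a single application of the pullback lemma.
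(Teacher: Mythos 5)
Your overall architecture is the same as the paper's: write down the product atlas on $\mathbb{R}^n\times V_y$, observe that the chart inverses are built from $\phi_x^{-1}$, $F$ and $\Psi_y^{-1}$, and conclude with a uniform application of Lemma \ref{lemma:PullBackOneForm}. However, there is a genuine flaw in your uniformity step. You claim that the chart inverses of the Euclidean factor ``have Lip-$\gamma$ norms independent of $x$ and $z$, since the standard atlases \dots are invariant under integer translations.'' Translation invariance controls the \emph{transition maps} (which only see differences), but not the chart inverses themselves: by Definition \ref{Def:LipMap}, the Lip-$\gamma$ norm includes the sup bound $\|f^0\|_\infty\leq M$, and the chart inverse $\phi_x^{-1}$ on the unit ball is (essentially) the translation $u\mapsto u+x$, whose sup norm grows like $|x|$ as the lattice index varies. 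So the uniform Lip-$\gamma$ bound you assert for $\Phi_{x,z}$ is false, and your later remark that each $\Phi_{x,z}$ is ``Lip-$\gamma$ on a bounded convex domain, hence almost Lip-$\gamma$'' does not repair this: passing from Lip to almost Lip in that way only gives an almost-Lip constant dominated by the (non-uniform) Lip constant, so you do not obtain a single constant $c_{\gamma,y}$. This is exactly the obstacle you flag at the end as ``bookkeeping,'' and as written the argument does not get past it.

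The fix — and the route the paper takes — is to work with almost Lipschitz norms from the start, which is precisely why Lemma \ref{lemma:PullBackOneForm} is stated for almost Lip-$\gamma$ maps: its bound only involves $\|f\|_{\delta,\textrm{Lip}-\gamma}$ and $\|f_*\|_{\textrm{Lip}-(\gamma-1)}$, neither of which sees the sup of $f^0$. Concretely, the translation $\phi_x^{-1}$ has derivative identically $\mathrm{Id}$ and vanishing remainder $R_0$, so its almost Lip-$\gamma$ norm is bounded independently of $x$; for the $V_y$-factor, Lemma \ref{lemma:InvChartLip} (applied to the Lip-$\gamma$ homeomorphism $\Psi_y^{-1}\circ F$) gives chart inverses that are Lip-$\gamma$ in the classical sense with a uniform bound depending only on $\gamma$ and $\Psi_y$. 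The product chart inverses are then almost Lip-$\gamma$ with uniformly bounded constants, and a single application of Lemma \ref{lemma:PullBackOneForm} yields the stated estimate. With that substitution your proof coincides with the paper's.
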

	\begin{proof} We denote $p=\textrm{dim}( P_{\mathcal{D}}(y))$. Let ${(U_x, \phi_x)}_{x\in I}$ and ${(\widetilde{U}_z, \varphi_z)}_{z\in J}$ be the Lip-$\gamma$ atlases  on $\mathbb{R}^{n}$ and $V_y$ respectively. For $(x,z)\in I\times J$, we denote
	\[
	\xi_{(x,z)}: (u,v)\in \mathbb{R}^{n} \times V
	\longmapsto (\phi_x(u),\varphi_z(v)) \in B_n(0,1)\times B_p(0,1).
	\]
	To prove the claim, we need to show that the one-forms $((\xi^{-1}_{(x,z)})_*\alpha )_{(x,z)\in I\times J}$ are Lip-$\gamma$ with uniformly bounded norms. By the definition of $\phi_x$ and Lemma \ref{lemma:InvChartLip}, the maps $(\xi^{-1}_{(x,z)})_{(x,z)\in I\times J}$ are almost Lip-$\gamma$ with uniformly bounded constants. Hence the result by Lemma \ref{lemma:PullBackOneForm}.
	\end{proof}	
	\begin{lemma}\label{ExtendRPM} Let $\gamma, p \in \mathbb{R}$ such that $\gamma>p\geq1$. Let $n, d\in\mathbb{N}^*$. Let $\mathcal{D}$ be a family of Lip-$\gamma$ vector fields on $\mathbb{R}^d$. Let $y\in \mathbb{R}^d$. Then every $p$-rough path $\mathbb{Z}$ in $\mathbb{R}^n\times V_y$ with starting point $z$ defines a $p$-rough path $\widehat{\mathbb{Z}}$ in $\mathbb{R}^n\times \mathbb{R}^d$ with the same starting point and such that, for every compactly supported Banach space-valued Lip-$(\gamma-1)$ one-form $\alpha$ on $\mathbb{R}^n\times \mathbb{R}^d$
	 \[
	 \widehat{\mathbb{Z}}(\alpha)=\mathbb{Z}(\alpha_{|\mathbb{R}^n\times V_y})
	 .\]
	\end{lemma}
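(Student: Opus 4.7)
My plan is to define $\widehat{\mathbb{Z}}$ directly through the prescribed formula, after regularising it via a cutoff so that the right-hand side makes sense as the action of $\mathbb{Z}$ on a compactly supported one-form. The support $S := \mathrm{supp}(\mathbb{Z})$ is a compact subset of $\mathbb{R}^n \times V_y$, so by Theorem \ref{ExtendLipMapManifold} I fix once and for all a compactly supported Lip-$\gamma$ function $\chi : \mathbb{R}^n \times V_y \to \mathbb{R}$ that equals $1$ on an open neighbourhood $U$ of $S$. For any compactly supported Lip-$(\gamma-1)$ one-form $\alpha$ on $\mathbb{R}^n \times \mathbb{R}^d$, Lemma \ref{OneFormSubman} yields the Lip-$(\gamma-1)$ regularity of $\alpha_{|\mathbb{R}^n \times V_y}$ on $\mathbb{R}^n \times V_y$, so multiplying by $\chi$ (reading Theorem \ref{EmbedLip} and Proposition \ref{Compobifunc} through the charts) produces a compactly supported Lip-$(\gamma-1)$ one-form on $\mathbb{R}^n \times V_y$. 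I then set
\[
\widehat{\mathbb{Z}}(\alpha) := \mathbb{Z}\bigl(\chi\, \alpha_{|\mathbb{R}^n \times V_y}\bigr),
\]
with starting point $z \in \mathbb{R}^n \times V_y \subseteq \mathbb{R}^n \times \mathbb{R}^d$. Two such cutoffs $\chi$ and $\chi'$ produce one-forms that agree on the open neighbourhood $U \cap U'$ of $S$, so Proposition \ref{RPIntegralDependenceSupport} makes the definition independent of $\chi$.

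To conclude that $\widehat{\mathbb{Z}}$ is a manifold $p$-rough path, I verify the three axioms of Definition \ref{Def:RPManCLLy}. The fact that $\widehat{\mathbb{Z}}(\alpha)$ is a geometric $p$-rough path, together with linearity in $\alpha$, is inherited from $\mathbb{Z}$. For the control axiom, let $\omega$ be a control for $\mathbb{Z}$; combining Lemma \ref{OneFormSubman} with the (now fixed) Lip-$(\gamma-1)$ norm of $\chi$ yields a constant $C = C(\gamma, y, \chi)$ such that
\[
\|\chi\, \alpha_{|\mathbb{R}^n \times V_y}\|_{\textrm{Lip-}(\gamma-1)} \leq C\, \|\alpha\|_{\textrm{Lip-}(\gamma-1)},
\]
so that $\widehat{\mathbb{Z}}(\alpha)$ is controlled by $C\omega$ uniformly in $\alpha$, which is exactly what the second axiom demands.

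The chain rule is the part requiring genuine care and is where I expect the main obstacle to lie. Given a compactly supported Lip-$\gamma$ map $\psi : \mathbb{R}^n \times \mathbb{R}^d \to F$ and a Lip-$(\gamma-1)$ one-form $\alpha$ on $F$, the restriction $\psi_{|\mathbb{R}^n \times V_y}$ is Lip-$\gamma$ on $\mathbb{R}^n \times V_y$ (by an argument analogous to Lemma \ref{OneFormSubman}, with Theorem \ref{thm:CompoLip} in place of Lemma \ref{lemma:PullBackOneForm}), so $\chi\, \psi_{|\mathbb{R}^n \times V_y}$ is a compactly supported Lip-$\gamma$ map on $\mathbb{R}^n \times V_y$. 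The crucial compatibility of restriction with pushforward and pullback,
\[
(\psi^*\alpha)_{|\mathbb{R}^n \times V_y} = (\psi_{|\mathbb{R}^n \times V_y})^*\alpha \qquad \textrm{and} \qquad (\psi_*)_{|\mathbb{R}^n \times V_y} = (\psi_{|\mathbb{R}^n \times V_y})_*,
\]
combined with the definition of $\widehat{\mathbb{Z}}$, gives $\widehat{\mathbb{Z}}(\psi_*) = \mathbb{Z}(\chi\, (\psi_{|\mathbb{R}^n \times V_y})_*)$ as rough paths in $F$. Applying the chain rule that $\mathbb{Z}$ already satisfies on the manifold $\mathbb{R}^n \times V_y$ to the map $\chi\, \psi_{|\mathbb{R}^n \times V_y}$, and then invoking Proposition \ref{RPIntegralDependenceSupport} on both sides to drop the cutoffs where $\chi$ equals $1$ near $S$, delivers the desired identity $\widehat{\mathbb{Z}}(\psi^*\alpha) = \int \alpha(\psi(z), \widehat{\mathbb{Z}}(\psi_*))\, \mathrm{d}\widehat{\mathbb{Z}}(\psi_*)$. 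The delicate point throughout this last step is precisely this bookkeeping: checking that the cutoff does not distort any of the identities featured in the chain rule on a neighbourhood of $S$, which is where the flexibility of the manifold support theory plays the central role.
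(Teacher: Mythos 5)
Your construction is correct and follows the same skeleton as the paper's proof: define $\widehat{\mathbb{Z}}$ by restricting one-forms to $\mathbb{R}^n\times V_y$, get the regularity and the control estimate from Lemma \ref{OneFormSubman}, and obtain the chain rule from the compatibility of restriction with pullback, $(\psi^*\alpha)_{|\mathbb{R}^n\times V_y}=(\psi_{|\mathbb{R}^n\times V_y})^*\alpha$. The genuine difference is your cutoff $\chi$. The paper sets $\widehat{\mathbb{Z}}(\alpha)=\mathbb{Z}(\alpha_{|\mathbb{R}^n\times V_y})$ directly, which tacitly requires $\alpha_{|\mathbb{R}^n\times V_y}$ to be compactly supported as a one-form on the manifold $\mathbb{R}^n\times V_y$; since $V_y$ need not be closed in $\mathbb{R}^d$, the intersection of $\mathrm{supp}(\alpha)$ with $\mathbb{R}^n\times V_y$ need not be compact, so Definition \ref{Def:RPManCLLy} does not formally apply -- a point the paper passes over. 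Your version $\widehat{\mathbb{Z}}(\alpha):=\mathbb{Z}(\chi\,\alpha_{|\mathbb{R}^n\times V_y})$, with $\chi\equiv 1$ on a neighbourhood of the compact set $\mathrm{supp}(\mathbb{Z})$ and well-definedness via Proposition \ref{RPIntegralDependenceSupport}, repairs this and reduces to the paper's formula whenever the latter makes literal sense. The price is the chain-rule bookkeeping you only sketch: $(\chi\,\psi_{|})^*\alpha$ and $(\chi\,\psi_{|})_*$ differ from $\chi\,(\psi_{|})^*\alpha$ and $\chi\,(\psi_{|})_*$ by terms involving $\mathrm{d}\chi$, but all of these agree on the open set where $\chi\equiv 1$ (hence $\mathrm{d}\chi=0$), which contains $\mathrm{supp}(\mathbb{Z})$, so Proposition \ref{RPIntegralDependenceSupport} removes the discrepancy on both sides; moreover the starting point $z$ belongs to $\mathrm{supp}(\mathbb{Z})$ by definition, so $\chi(z)=1$ and the base point $(\chi\,\psi_{|})(z)=\psi(z)$ in the integral is unchanged. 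With those two observations made explicit, your argument is complete and, if anything, slightly more careful than the paper's.
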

	\begin{proof} Let $\mathbb{Z}$ be a $p$-rough path in $\mathbb{R}^n\times V_y$ with starting point $z$ and with $p$-variation controlled by a control function $\omega$. We define the functional $\widehat{\mathbb{Z}}$ acting on any compactly supported Banach space-valued Lip-$(\gamma_0-1)$ one-form (with $\gamma\geq \gamma_0>p$) $\alpha$ on $\mathbb{R}^n\times \mathbb{R}^d$ by $\widehat{\mathbb{Z}}(\alpha)=\mathbb{Z}(\alpha_{|\mathbb{R}^n\times V_y})$. Let $\alpha$ be such a one-form. Then by the definition of a rough path on a manifold and Lemma \ref{OneFormSubman} , $\widehat{\mathbb{Z}}(\alpha)$ is a $p$-rough path. Moreover, for every $0\leq s\leq t\leq T$ and $i\in [\![1,p]\!]$, one has (using the bound obtained in Lemma \ref{OneFormSubman})
	 \[
	 \left\|\widehat{\mathbb{Z}}(\alpha)^i_{s,t}
	 \right\|
	 =\left\|\mathbb{Z}(\alpha_{|\mathbb{R}^n\times V_y})^i_{s,t}
	 \right\|
	 \leq \frac{\left(c_{\gamma, y} \|\alpha\|_{\textrm{Lip-}(\gamma-1)}\omega(s,t)\right)^{i/p}}{\beta_p \left(\frac{i}{p} \right)!}
	 .\]
	 Note that $c_{\gamma, y} \omega$ is also a control function. Finally, given a compactly supported Lip-$\gamma_0$ map $g$ defined on $\mathbb{R}^n\times \mathbb{R}^d$ with values in a Banach space $W$ and a Banach space-valued Lip-$(\gamma_0-1)$ one-form $\alpha$ on $W$, it suffices to note that
	 \[(g^*\alpha)_{|\mathbb{R}^n\times V_y}=(g_{|\mathbb{R}^n\times V_y})^*\alpha\]
	 to prove the chain rule property for $\widehat{\mathbb{Z}}$ using that of $\mathbb{Z}$. Hence $\widehat{\mathbb{Z}}$ is indeed a $p$-rough path on $\mathbb{R}^n\times \mathbb{R}^d$.
	\end{proof}
	We show now that, for a short time, the (traces of the) solutions to a classical RDE remain in the same orbit (associated to its vector fields).
	\begin{lemma}\label{lemma:transferSolSubmMan} Let $p\geq 1$ and $\gamma>p+1$. Let $n, d\in\mathbb{N}^*$. Let $\mathcal{D}=\{f^1,\ldots, f^n\}$ be a family of Lip-$\gamma$ vector fields on $\mathbb{R}^d$ and define the map $f:\mathbb{R}^d\to \mathcal{L}(\mathbb{R}^n,\mathbb{R}^d)$ by
	\[ \forall y\in \mathbb{R}^d, \forall x=(x_1,\ldots,x_n) \in \mathbb{R}^n: \quad
	f(y)(x)=\sum_{i=1}^n x_if^i(y).
	\] 
	Let $y\in \mathbb{R}^d$. Let $\mathbf{X}$ be a geometric $p$-rough path on $\mathbb{R}^n$. Consider the following classical RDE on $ \mathbb{R}^d$
	\begin{equation}\label{eq:RDESubmanifold_2}
	\left\{\begin{array}{l}
	\mathrm{d}\mathbf{Y}_t=f(\mathbf{Y}_t)\mathrm{d}\mathbf{X}_t\\
	\mathbf{Y}_0=y\\
	\end{array}
	\right.
	\end{equation}
	Then there exists $T_0>0$ such that the trace of the solution to (\ref{eq:RDESubmanifold_2})   on the time interval $[0,T_0]$ lives in $V_y$, i.e. for all $t\in [0,T_0]$ one has $y+\mathbf{Y}^1_{0,t} \in V_y$.
	\end{lemma}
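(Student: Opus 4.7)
The plan is to construct a manifold solution to the RDE supported in the orbit chart $V_y$, extend it to a manifold rough path on $\mathbb{R}^n\times\mathbb{R}^d$, identify this extension with a classical rough path via uniqueness, and then use the manifold support to locate the trace of $\mathbf{Y}$. The main difficulty will be verifying that the extension satisfies the manifold RDE axiom on $\mathbb{R}^n\times\mathbb{R}^d$; this hinges on the fact that $V_y$ was constructed in Theorem \ref{LipCharts} so that $T_uV_y = P_{\mathcal{D}}(u)$, making $f$ act as a well-defined tangent-bundle valued map on $V_y$.

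First, apply Proposition \ref{ExistSolSubm} to obtain $T_0>0$ and a manifold $p$-rough path $\mathbb{Z}=(\mathbb{X}_{|[0,T_0]},\mathbb{Y})$ on $\mathbb{R}^n\times V_y$ solving the RDE (with starting point $(x_0,y)$, where $x_0$ is the starting point of $\mathbf{X}$). A direct inspection of the proof of Proposition \ref{ExistSolSubm} shows that the support of $\mathbb{Z}$ is contained in the compact set
\[K:=\overline{B_n(0,1)}\times\Psi_y^{-1}\bigl(\overline{B_p(0,1/\sqrt{2})}\bigr)\subseteq\mathbb{R}^n\times V_y.\]
Then use Lemma \ref{ExtendRPM} to extend $\mathbb{Z}$ to a manifold $p$-rough path $\widehat{\mathbb{Z}}$ on $\mathbb{R}^n\times\mathbb{R}^d$ with the same starting point.

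Next, verify that $\widehat{\mathbb{Z}}$ solves, in the manifold sense, the RDE on $\mathbb{R}^n\times\mathbb{R}^d$ driven by $\mathbf{X}$ with transport $f$. For every compactly supported Lip-$(\gamma-1)$ Banach-valued one-form $\alpha$ on $\mathbb{R}^n\times\mathbb{R}^d$, use the pointwise identity
\[(\alpha^f)_{|\mathbb{R}^n\times V_y}(x,u)(v_x,v_u)=\alpha(x,u)\bigl(v_x,f(u)(v_x)\bigr)=\bigl(\alpha_{|\mathbb{R}^n\times V_y}\bigr)^f(x,u)(v_x,v_u),\]
which is valid on $\mathbb{R}^n\times V_y$ precisely because $f(u)(v_x)\in\mathcal{L}(\mathcal{D})(u)\subseteq P_{\mathcal{D}}(u)=T_uV_y$ for $u\in V_y$. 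Combining this with the solution property $\mathbb{Z}(\beta)=\mathbb{Z}(\beta^f)$ established in the proof of Proposition \ref{ExistSolSubm} and the defining identity of $\widehat{\mathbb{Z}}$ from Lemma \ref{ExtendRPM}, one obtains $\widehat{\mathbb{Z}}(\alpha)=\widehat{\mathbb{Z}}(\alpha^f)$. Invoking the equivalence of classical and manifold rough paths on Euclidean space recalled after Definition \ref{Def:RPManCLLy}, $\widehat{\mathbb{Z}}$ corresponds to a classical $p$-rough path whose first component is $\mathbf{X}_{|[0,T_0]}$ and whose second component $\mathbf{Y}'$ solves the classical RDE $\mathrm{d}\mathbf{Y}'_t=f(\mathbf{Y}'_t)\mathrm{d}\mathbf{X}_t$ with $\mathbf{Y}'_0=y$. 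Uniqueness for classical RDEs (available because $\gamma>p$) then forces $\mathbf{Y}'=\mathbf{Y}$ on $[0,T_0]$.

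Finally, locate $\mathrm{supp}(\widehat{\mathbb{Z}})$. Set $W:=(\mathbb{R}^n\times\mathbb{R}^d)\setminus K$, which is open. For any compactly supported Lip-$(\gamma-1)$ one-form $\alpha$ with support in $W$, the restriction $\alpha_{|\mathbb{R}^n\times V_y}$ has support disjoint from $K$, hence from $\mathrm{supp}(\mathbb{Z})$; applying Proposition \ref{RPIntegralDependenceSupport} (with the zero one-form as comparator) gives $\mathbb{Z}(\alpha_{|\mathbb{R}^n\times V_y})=0$, so $\widehat{\mathbb{Z}}(\alpha)=0$ and $\widehat{\mathbb{Z}}$ misses $W$. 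Since $(x_0,y)\in K$, this yields $\mathrm{supp}(\widehat{\mathbb{Z}})\subseteq K$. Invoking the identification between the manifold support of a classical rough path on Euclidean space and the image of its trace, one concludes that $y+\mathbf{Y}^1_{0,t}\in\Psi_y^{-1}(\overline{B_p(0,1/\sqrt{2})})\subseteq V_y$ for every $t\in[0,T_0]$.
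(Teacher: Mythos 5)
Your proposal is correct and follows essentially the same route as the paper's own proof: take the manifold solution from Proposition \ref{ExistSolSubm}, extend it via Lemma \ref{ExtendRPM}, use the identity $\bigl(\alpha_{|\mathbb{R}^n\times V_y}\bigr)^{f}=\bigl(\alpha^{f}\bigr)_{|\mathbb{R}^n\times V_y}$ to see that the extension solves the RDE on $\mathbb{R}^n\times\mathbb{R}^d$, identify it with the classical solution by uniqueness, and read off the localisation of the trace from the support. Your treatment of the support step (missing the complement of $K$ via Proposition \ref{RPIntegralDependenceSupport}) merely spells out what the paper leaves implicit.
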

	\begin{proof} Let $\mathbb{Z}$ be a solution to the RDE (\ref{eq:RDESubmanifold_2}) considered in the manifold sense on $V_y$ defined over a non-trivial interval $[0,T_0]$ (Proposition \ref{ExistSolSubm}). By Lemma \ref{ExtendRPM}, $\mathbb{Z}$ defines a rough path $\widehat{\mathbb{Z}}$ on $\mathbb{R}^n\times \mathbb{R}^d$. Let $\alpha$ be a Lip-$(\gamma_0-1)$ compactly supported Banach-space-valued one form on $\mathbb{R}^n\times \mathbb{R}^d$, with $\gamma_0\in(p,\gamma]$. By the definition of solutions to RDEs on manifolds and the very definition of $\widehat{\mathbb{Z}}$, one has the following
	\[\widehat{\mathbb{Z}}(\alpha)
	=\mathbb{Z}(\alpha_{|\mathbb{R}^n\times V_y})
	=\mathbb{Z}\left(\left(\alpha_{|\mathbb{R}^n\times V_y}\right)^{f}\right)
	=\mathbb{Z}\left(\left(\alpha^{f}\right)_{|\mathbb{R}^n\times V_y}\right)
	=\widehat{\mathbb{Z}}\left(\alpha^{f}\right).\]
	Therefore $\widehat{\mathbb{Z}}$ is a solution to the RDE (\ref{eq:RDESubmanifold_2}) on $\mathbb{R}^d$ in the manifold sense; and by the consistency of definitions, also in the classical sense. By the uniqueness of such solution, we have then $\mathbf{Y}=\pi_{\mathbb{R}^d}(\widehat{\mathbb{Z}})$ (when restricted to $[0,T_0]$). Since $\widehat{\mathbb{Z}}$ is supported in $\mathbb{R}^n\times V_y$, we conclude that the trace of $\mathbf{Y}$ lives indeed in $V_y$. \end{proof}
	We get now the main theorem of this paper.
	\begin{theo}\label{THM:ReachThm} Let $p\geq 1$ and $\gamma>p+1$. Let $n, d\in\mathbb{N}^*$. Let $\mathcal{D}=\{f^1,\ldots, f^n\}$ be a family of Lip-$\gamma$ vector fields on $\mathbb{R}^d$ and define the map $f:\mathbb{R}^d\to \mathcal{L}(\mathbb{R}^n,\mathbb{R}^d)$ by
	\[ \forall y\in \mathbb{R}^d, \forall x=(x_1,\ldots,x_n) \in \mathbb{R}^n: \quad
	f(y)(x)=\sum_{i=1}^n x_if^i(y).
	\] 
	Let $y_0\in \mathbb{R}^d$. Let $\mathbf{X}$ be a geometric $p$-rough path on $\mathbb{R}^n$. Then the solution to the rough differential equation 
	\begin{equation}\label{eq:RDEManifold_3}
	\left\{\begin{array}{l}
	\mathrm{d}\mathbf{Y}_t=f(\mathbf{Y}_t)\mathrm{d}\mathbf{X}_t\\
	\mathbf{Y}_0=y_0\\
	\end{array}
	\right.
	\end{equation}
	takes values in $\mathbb{L}_{y_0}(\mathcal{D})$, the $\mathcal{D}$-orbit at $y_0$. In particular, for every $T\geq 0$, there exists a piecewise linear path $x$ in $\mathbb{R}^d$ such that $y_0+\mathbf{Y}^1_{0,T}=\gamma(y_0,x,T)$, where $(\gamma(y_0,x,t))_{0\leq t \leq T}$ is the solution to the ODE
	\begin{equation}\label{eq:ODEManifold_3}
	\left\{\begin{array}{l}
	\mathrm{d}\gamma_t=f(\gamma_t)\mathrm{d}x_t\\
	\gamma_0=y_0\\
	\end{array}
	\right.
	\end{equation}
	\end{theo}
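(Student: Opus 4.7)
The plan is to prove the theorem in two stages: first show that the full trace $t \mapsto y_0+\mathbf{Y}^1_{0,t}$ of the classical solution to (\ref{eq:RDEManifold_3}) never leaves the orbit $\mathbb{L}_{y_0}(\mathcal{D})$, and then convert this information, via Sussmann's generation result (Theorem \ref{theo:GeneratePD}) combined with a standard reinterpretation of flows as ODE solutions, into the required piecewise linear driving path.

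For the first stage, I set
\[
A := \{ t \in [0,T] : y_0+\mathbf{Y}^1_{0,s} \in \mathbb{L}_{y_0}(\mathcal{D}) \text{ for every } s \in [0,t]\}
\]
and $T^* := \sup A$. Lemma \ref{lemma:transferSolSubmMan} applied at $y_0$ shows that $A$ contains a neighbourhood of $0$, so $T^* > 0$. Whenever $s \in A$, the restriction of $\mathbf{Y}$ to $[s,T]$ is, by classical uniqueness for RDEs on $\mathbb{R}^d$, the solution of (\ref{eq:RDEManifold_3}) driven by $\mathbf{X}|_{[s,T]}$ with initial condition $y_s := y_0+\mathbf{Y}^1_{0,s}$; a fresh application of Lemma \ref{lemma:transferSolSubmMan} at $y_s$ then places the trace on $[s,s+\eta_s]$ inside $V_{y_s}\subseteq \mathbb{L}_{y_s}(\mathcal{D})=\mathbb{L}_{y_0}(\mathcal{D})$. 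Consequently, as soon as $T^*\in A$ and $T^*<T$ we could push past $T^*$, so either $T^*=T$ and we are done, or $T^*\notin A$, meaning $y_{T^*}\notin\mathbb{L}_{y_0}(\mathcal{D})$ while $y_s\in\mathbb{L}_{y_0}(\mathcal{D})$ for every $s<T^*$.

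Ruling out this second possibility is where the real work lies, and I expect it to be the main technical obstacle. My plan is to invoke time reversal of rough paths: the reversed rough path $\overleftarrow{\mathbf{X}}$ on $[0,T^*]$ drives an RDE whose solution started at $y_{T^*}$ has as its trace precisely $t\mapsto y_0+\mathbf{Y}^1_{0,T^*-t}$. Crucially, Theorem \ref{LipCharts} still provides a Lipschitz chart $\Psi_{y_{T^*}}:V_{y_{T^*}}\to B_p(0,1)$ at $y_{T^*}$, independently of whether $y_{T^*}$ lies in the orbit $\mathbb{L}_{y_0}(\mathcal{D})$. Applying Lemma \ref{lemma:transferSolSubmMan} to the backward problem at $y_{T^*}$ yields $\eta>0$ such that $y_{T^*-t}\in V_{y_{T^*}}\subseteq\mathbb{L}_{y_{T^*}}(\mathcal{D})$ for all $t\in[0,\eta]$. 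Picking any $s=T^*-t$ with $0<t<\eta$ gives simultaneously $y_s\in\mathbb{L}_{y_0}(\mathcal{D})$ and $y_s\in\mathbb{L}_{y_{T^*}}(\mathcal{D})$; since orbits are equivalence classes, this forces $\mathbb{L}_{y_0}(\mathcal{D})=\mathbb{L}_{y_{T^*}}(\mathcal{D})$ and hence $y_{T^*}\in\mathbb{L}_{y_0}(\mathcal{D})$, the required contradiction.

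For the second stage, once $y_T := y_0+\mathbf{Y}^1_{0,T}\in\mathbb{L}_{y_0}(\mathcal{D})$ has been established, the definition of the orbit supplies indices $i_1,\ldots,i_m\in\{1,\ldots,n\}$ and times $\tau_1,\ldots,\tau_m\in\mathbb{R}$ with $\tilde{f}^{i_m}_{\tau_m}\circ\cdots\circ\tilde{f}^{i_1}_{\tau_1}(y_0) = y_T$. Each flow $s\mapsto\tilde{f}^{i_k}_s(p)$ coincides with the classical ODE solution driven by the axis path $s\mapsto s\,e_{i_k}$, so running it for time $\tau_k$ (with $\tau_k<0$ handled by a negatively oriented segment) corresponds to driving with the linear segment $s\mapsto\mathrm{sgn}(\tau_k)\,s\,e_{i_k}$ on $[0,|\tau_k|]$. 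Concatenating these $m$ axis segments in the prescribed order and reparametrising the total duration $\sum_k|\tau_k|$ onto $[0,T]$ produces a piecewise linear driving path $x:[0,T]\to\mathbb{R}^n$; by construction its classical ODE solution visits each intermediate point $\tilde{f}^{i_k}_{\tau_k}\circ\cdots\circ\tilde{f}^{i_1}_{\tau_1}(y_0)$ and terminates at $y_T$ at time $T$, yielding $y_0+\mathbf{Y}^1_{0,T}=\gamma(y_0,x,T)$ as requested.
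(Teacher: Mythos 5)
Your proposal is correct and follows the same two-stage skeleton as the paper: first show that the trace stays in the orbit using Lemma \ref{lemma:transferSolSubmMan} together with the fact that the charts $V_z$ of Theorem \ref{LipCharts} sit inside the corresponding orbits (so that any point shared by two orbits, or by a chart and an orbit, identifies the orbits), and then realise any point of $\mathbb{L}_{y_0}(\mathcal{D})$ as the terminal value of an ODE driven by a reparametrised concatenation of axis paths; your second stage is essentially verbatim the paper's. Where you genuinely diverge is in how the first stage is closed at the critical time: the paper disposes of it in one sentence (``combining this trivial observation with Lemma \ref{lemma:transferSolSubmMan} and a compactness argument''), which is delicate to make precise because the short time $T_0$ furnished by Proposition \ref{ExistSolSubm} via Theorem \ref{SingleChartRP} depends on the Lipschitz structure of $V_{y_s}$ and is not obviously bounded below along the trace, so a naive forward chaining could stall before $T$. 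Your supremum argument plus time reversal addresses exactly this endpoint: the reversal of a geometric $p$-rough path is again geometric and the RDE it drives, started at $y_{T^*}$, retraces the original trace, so Lemma \ref{lemma:transferSolSubmMan} applied backwards places the points $y_s$, $s<T^*$ close to $T^*$, in $V_{y_{T^*}}\subseteq\mathbb{L}_{y_{T^*}}(\mathcal{D})$ and forces $\mathbb{L}_{y_{T^*}}(\mathcal{D})=\mathbb{L}_{y_0}(\mathcal{D})$. This buys an explicit and verifiable argument at the price of importing a standard fact not proved in the paper (time reversal/invertibility of flows of RDEs driven by geometric rough paths with Lip-$\gamma$, $\gamma>p$, vector fields). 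Two small repairs: your dichotomy ``either $T^*=T$ and we are done, or $T^*\notin A$'' should be rearranged, since $T^*=T$ alone does not yet give $y_T\in\mathbb{L}_{y_0}(\mathcal{D})$ --- run the time-reversal step first to get $y_{T^*}\in\mathbb{L}_{y_0}(\mathcal{D})$, hence $T^*\in A$, and then the forward application of Lemma \ref{lemma:transferSolSubmMan} at $y_{T^*}$ rules out $T^*<T$; and in the final reparametrisation you should treat separately the degenerate case where all the flow times vanish (take the constant driving path), a case the paper's own proof also skips.
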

	\begin{proof} First note that for $z_1, z_2 \in \mathbb{R}^d$, if $V_{z_1}\cap V_{z_2}\neq \varnothing$, then $z_1$ and $z_2$ lie on the same $D$-orbit. Combining this trivial observation with Lemma \ref{lemma:transferSolSubmMan} and a compactness argument yields the first part of the theorem. To prove the second part, we make the observation that all points of $\mathbb{L}_{y_0}(\mathcal{D})$ are solutions to ODEs of the type (\ref{eq:ODEManifold_3}) driven by piecewise linear paths. Indeed, let $y\in\mathbb{R}^d$, $i\in [\![1,n]\!]$, $t\in\mathbb{R}$ and $z=\tilde{f}^i_{t}(y)$. If $t\geq 0$ (resp. $t\leq 0$), then $z$ is the value at time $t$ of the solution to (\ref{eq:ODEManifold_3}) when driven by the linear path $x:\;s\mapsto s\cdot e_i$ (resp. $x:\;s\mapsto -s\cdot e_i$), with $(e_1,\ldots,e_n)$ denoting the canonical basis of $\mathbb{R}^n$. In other words, $z=\gamma(y,x,|t|)$. Note also that the time $|t|$ of the arrival of such solution to $z$ can be arbitrarily chosen by rescaling (time-reparametrising) $x$. More generally, let $z$ be a point in $\mathbb{L}_{y_0}(\mathcal{D})$, $p$ be a positive integer, $f^1,\ldots,f^p$ be vector fields in $\mathcal{D}$ and $t_1,\ldots,t_p$ be real numbers such that
	\[z=\tilde{f}^p_{t_p}\circ\cdots\circ \tilde{f}^1_{t_1}(y_0).\]
	By the argument above, let $x_1$ be a linear path such that 
	\[\tilde{f}^1_{t_1}(y_0)=\gamma\left(y_0,x_1,\frac{|t_1|}{|t_1|+\ldots+|t_p|} T\right)
	.\]
	Inductively, for every $i\in [\![2,p]\!]$, let $x_i$ be a linear path such that 
	\[
	\tilde{f}^i_{t_i}\circ \cdots \circ \tilde{f}^1_{t_1}(y_0)
	=\gamma\left(
	\tilde{f}^{i-1}_{t_{i-1}}\circ \cdots\circ \tilde{f}^1_{t_1}(y_0),x_i,\frac{|t_i|}{|t_1|+\ldots+|t_p|} T
	\right).
	\]
	Then
	\[
	z=\gamma\left(y_0,x_p*\cdots*x_1,T\right),
	\]
	where $x_p*\cdots*x_1$ is the concatenation of $x_1,\ldots,x_p$ (in this order), which is a piecewise linear path. This concludes the proof.
	\end{proof}
	\section{Concluding remarks}\label{sec:Conclusion}	
	In \cite{Sussmann}, Sussmann constructs a smooth structure on orbits by ``patching up'' the smooth structures of the local neighborhoods (denoted $V_y$ in this article, see Theorem \ref{LipCharts}). However, we are not able to build such a global structure in the Lipschitz case due to the rigidity of the current definition of Lipschitz structures. Indeed, these are specifically tailored to allow for global solutions  to RDEs on manifolds to exist, but bridges to and from classical smooth structures are currently lacking (the only non-trivial exception being the case of smooth compact manifolds, see \cite{CLL2}). In the case of orbits, patching up the local neighborhoods yields a locally Lipschitz structure (as defined in \cite{BL}) which allows only for local solutions to RDEs (this result was not proven in \cite{BL} but can be carried using a local argument). It is difficult however to imagine that general RDEs do not admit global solutions on orbits of vector fields since (the traces of) these solutions when considered in the ambient Euclidean space lie in the orbits. But whether the orbits admit a Lipschitz structure or a ``weaker'' structures that still allows for the global solution to RDEs remains an open question to our knowledge.\\
	
	A more important problem -at least from an analytical point of view- is to quantify the necessary length of a piecewise linear path to yield the same terminal solution as a given geometric rough path. If we try and exploit our geometric argument by breaking down the original signal so that the solution is written as a concatenation of rough paths that live in individual charts, then the question boils down to solving the same problem when restricted to the charts $V_y$. However, the understanding of these charts $V_y$ requires a deep knowledge of the whole orbit and on how to transport new tangent directions from points that are possibly very far away from $y$. Consequently, it is more judicious to reserve such argument for special cases where such knowledge is readily available.  An interesting example thereof is the case of the free nilpotent group and the inversion of signatures (e.g. \cite{LX, LX2}).
	
	\bibliographystyle{alpha}
	\bibliography{bibLipOrbit}
\end{document}